\documentclass[12pt]{amsart}

\usepackage{amsthm}
\newtheorem{theorem}{Theorem}[section]
\newtheorem{lemma}[theorem]{Lemma}
\newtheorem{proposition}[theorem]{Proposition}
\theoremstyle{definition}

\theoremstyle{remark}

\counterwithin*{section}{part}

\usepackage{amssymb}
\usepackage{empheq}
\usepackage{stmaryrd}
\usepackage{enumerate}
\usepackage[shortlabels]{enumitem}
\usepackage{calc}
\usepackage{url}
\usepackage{comment}

\usepackage{xcolor}

\newcommand{\CAT}{{\rm{CAT}(0)}}

\def\EE{\mathbb{E}}
\def\PP{\mathbb{P}}

\usepackage[left=2.0cm,%
                right=2.0cm,%
                top=2.5cm,%
                bottom=3.5cm,%
                headheight=12pt,%
                a4paper]{geometry}%

\begin{document}

\title[On stochastic Busemann subgradient methods]{On Busemann subgradient methods for stochastic minimization in Hadamard spaces}

\author[N. Pischke]{Nicholas Pischke}
\date{\today}
\maketitle
\vspace*{-5mm}
\begin{center}
{\scriptsize 
Department of Computer Science, University of Bath,\\
Claverton Down, Bath, BA2 7AY, United Kingdom.\\
E-mail: nnp39@bath.ac.uk}
\end{center}

\maketitle
\begin{abstract}
We study the recently introduced Busemann subgradient method due to Goodwin, Lewis, Nicolae and L\'opez-Acedo, extending it to minimize the mean of a stochastic function over general Hadamard spaces. We prove a strong convergence theorem under a local compactness assumption and further prove weak ergodic convergence of the method over Hadamard spaces satisfying condition $(\overline{Q}_4)$, a slight extension of the $(Q_4)$ condition of Kirk and Payanak, which in particular includes Hilbert spaces, $\mathbb{R}$-trees and spaces of constant curvature. The proof is based on a general (weak) convergence theorem for stochastic processes in Hadamard spaces which confine to a stochastic variant of quasi-Fej\'er monotonicity, together with a nonlinear variant of Pettis' theorem, which are of independent interest. Lastly, we provide a strong convergence result under a strong convexity assumption, and in that case in particular derive explicit rates of convergence.
\end{abstract}

\noindent
{\bf Keywords:} Subgradient method, Busemann function, stochastic approximation, weak convergence, Hadamard spaces, proof mining\\ 
{\bf MSC2020 Classification:} 47J25, 90C15, 90C25, 62L20, 03F10

\section{Introduction}

\subsection{Background and motivation}

One of the most general and productive formulations of (stochastic) approximation is the problem of minimizing a convex integral function, that is solving the problem
\[
\min_{x\in C}\int f(e,x)\,d\mu(e),\tag{P}\label{problem}
\]
for a given normal convex integrand $f:E\times C\to \mathbb{R}$ (see \cite{Rockafellar1971}) on a complete probability space $(E,\mathcal{E},\mu)$ and some target set $C\subseteq X$ in a suitable space $X$, say a Hilbert or Banach space. There are various prevalent modern tools for approaching this problem, among them being stochastic variants of the proximal point algorithm and of projected subgradient methods, and we refer to \cite{Bertsekas2011,Bertsekas2012,NemirovskiJuditskyLanShapiro2009}, among many others, for various such discussions.

Indeed, most of these methods already are concerned with the particularly important and motivating special case of minimizing a finite sum of convex functions $f_i:C\to\mathbb{R}$, i.e.
\[
\min_{x\in C}\sum_{i=1}^m f_i(x).\tag*{(P)$_f$}\label{problemFin}
\]
These problems gain further relevance if considered outside of linear contexts such as Hilbert or Banach spaces. Concretely, latest since the extensive developments of machine learning in recent years, where optimization over nonlinear spaces such as manifolds plays a key role (we refer e.g.\ to the discussion in \cite{ZhangSra2016}), extensions of these tools from (stochastic) convex analysis to nonlinear contexts are of high practical relevance. Further, this relevance naturally transcends the realm of spaces with differentiable structure such as manifolds, as illustrated by e.g.\ the Billera-Holmes-Vogtmann tree space \cite{BilleraHolmesVogtmann2001} prominently used in phylogenetics. 

The present paper is concerned with a method recently introduced by Goodwin, Lewis, L\'opez-Acedo and Nicolae \cite{GoodwinLewisLopezAcedoNicolae2025} to solve the above minimization problem \ref{problemFin} in the context of the general class of geodesic metric spaces with nonpositive curvature, as introduced in the work of Aleksandrov \cite{Aleksandrov1951}. These spaces, often called $\CAT$ spaces after the work of Gromov \cite{Gromov1987} and Hadamard spaces if complete, uniformly cover examples such as Hilbert spaces, $\mathbb{R}$-trees and Hadamard manifolds (i.e.\ complete simply connected Riemannian manifolds of nonpositive sectional curvature) and the Billera-Holmes-Vogtmann tree space mentioned before, as well as further involved examples. As such, they have continuously been a focus of attention for extending tools from convex analysis on linear spaces to nonlinear contexts, most notably recently in other related work by Goodwin, Lewis, L\'opez-Acedo and Nicolae  \cite{GoodwinLewisLopezAcedoNicolae2025b,LewisLopezAcedoNicolae2024,LewisLopezAcedoNicolae2024b}. We refer to \cite{AlexanderKapovitchPetrunin2023,BridsonHaefliger1999} for a comprehensive overview of geodesic and $\CAT$ spaces and further refer to \cite{Bacak2014a} for a shorter treatment focused on aspects of convex analysis and optimization.

Concretely, the work \cite{GoodwinLewisLopezAcedoNicolae2025} provides a projected subgradient method for the problem \ref{problemFin}. However, lack of linear structure makes constructions such as subgradients, which naturally rely on duality theory, complicated in general geodesic metric spaces such as Hadamard spaces. In that way, the approach of \cite{GoodwinLewisLopezAcedoNicolae2025} diverges from usual subgradient methods by making particularly novel use of the boundary cone $CX^\infty$ of a Hadamard space $X$ and associated Busemann functions, and other advanced geometric tools from Hadamard spaces, to define a novel type of subgradient for the associated functions.

Indeed, as shown in \cite{GoodwinLewisLopezAcedoNicolae2025}, this resulting notion of a Busemann subgradient, which in Euclidean space coincides with the usual notion of subgradients, supports a broad theory and in particular allows for the derivation and effective analysis of the following stochastic projected subgradient method:\footnote{The work \cite{GoodwinLewisLopezAcedoNicolae2025} also considers a non-stochastic incremental variant of this Busemann subgradient method, but we will not be concerned with such deterministic methods in the present paper.} Given a starting point $x_0\in X$, an i.i.d.\ sequence $(i_n)$ of random selections distributed uniformly over $\{1,\dots,m\}$ and a sequence of step-sizes $(t_n)$ of positive reals with $\sum_{n\in\mathbb{N}}t_n=+\infty$ and $\sum_{n\in\mathbb{N}}t_n^2<+\infty$, one defines the iteration
\[
x_{n+1}:=P_C(r_{x_n,\xi_{n}}(s_nt_n))\tag{\textsf{SB}$_0$}\label{SB0}
\]
where $[\xi_n,s_n]=\mathsf{Busemann}_{f_{i_n}}(x_{n})\in CX^\infty$ represents a Busemann subgradient of $f_{i_n}$ at $x_n$, chosen using an oracle function $\mathsf{Busemann}$, and $r_{x_n,\xi_{n}}(s_nt_n)$ represents the point reached by following the geodesic ray starting from $x_n$ with direction $\xi_n$ and speed $s_n$, and with step size $t_n$.\footnote{The above is just intended as a sketch. In particular, all these currently undefined notions will be detailed in Section \ref{initial} later on.} 

For this stochastic method, they in particular establish the following result:

\begin{theorem}[Theorem 6.2 in \cite{GoodwinLewisLopezAcedoNicolae2025}]\label{old}
Let $X$ be a Hadamard space with the geodesic extension property and at least two points and let $C\subseteq X$ be nonempty, closed and convex. Let $F(x):=\sum_{i=1}^m f_i(x)$ be given, where each $f_i:C\to\mathbb{R}$ is Busemann subdifferentiable, and such that $\mathrm{argmin}F\neq\emptyset$. Assume further that there is some $L\geq 0$ such that every Busemann subgradient $[\xi,s]$ of any $f_i$ at any point in $C$ satisfies $s\leq L$.

Let $(x_n)$ be the sequence generated by \eqref{SB0} with $(t_n)$ and $(i_n)$, where $(t_n)\subseteq (0,+\infty)$ is such that\footnote{In fact, they already provide this result under the assumption that $\sum_{k=0}^nt_k^2/\sum_{k=0}^nt_k\to 0$.} $\sum_{n\in\mathbb{N}}t_n=+\infty$ and $\sum_{n\in\mathbb{N}}t_n^2<+\infty$, and $(i_n)$ is a sequence of i.i.d.\ random variables distributed uniformly over $\{1,\dots,m\}$. Then
\[
\EE\left[\min_{i=0,\dots,n}F(x_i)\right]\to \min F.
\]
\end{theorem}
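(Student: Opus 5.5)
We follow the classical subgradient analysis, with the Busemann subgradient inequality in place of the linear one. The key fact we need is this. If $[\xi,s]$ is a Busemann subgradient of $f$ at $x$, then for every $y\in C$
\[
f(y)\geq f(x)+s\,b_\xi(y)-s\,b_\xi(x)
\]
up to the paper's normalisation, where $b_\xi$ is the Busemann function of the ray class $\xi$. We also use the comparison estimate along the ray $r=r_{x,\xi}$ from $x$:
\[
d(r(s t),y)^2\leq d(x,y)^2+2st\,(b_\xi(y)-b_\xi(x))+s^2t^2 .
\]
This follows by applying the $\CAT$ inequality to geodesic triangles $(x,y,r(T))$ with $T\to\infty$. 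Indeed, $b_\xi(y)-b_\xi(x)=\lim_T \bigl(d(y,r(T))-T\bigr)$, and the quadratic comparison gives $d(r(st),y)^2\le d(x,y)^2-2st\lim_T\bigl(d(x,r(T))-d(y,r(T))\bigr)+s^2t^2$ after a first-order expansion. Since $P_C$ is nonexpansive and fixes points of $C$, the projection step only decreases distances to any $x^*\in\mathrm{argmin}F$.

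Fix $x^*\in\mathrm{argmin} F$ and let $\mathcal{F}_n=\sigma(i_0,\dots,i_{n-1})$, so that $x_n$ is $\mathcal{F}_n$-measurable. Combining the two inequalities with $y=x^*$ and $s_n\le L$ gives
\[
d(x_{n+1},x^*)^2\leq d(x_n,x^*)^2-2t_n\bigl(f_{i_n}(x_n)-f_{i_n}(x^*)\bigr)+L^2t_n^2 .
\]
We take conditional expectations with respect to $\mathcal{F}_n$. Since $i_n$ is uniform and independent of $\mathcal{F}_n$, we have $\EE[f_{i_n}(x_n)-f_{i_n}(x^*)\mid\mathcal{F}_n]=\frac1m(F(x_n)-\min F)$. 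Taking full expectations, summing from $0$ to $n$ and using $F(x_k)\ge \min_{i\le n}F(x_i)$, we obtain
\[
\frac{2}{m}\Bigl(\sum_{k=0}^n t_k\Bigr)\,\EE\Bigl[\min_{i\le n}F(x_i)-\min F\Bigr]\leq d(x_0,x^*)^2+L^2\sum_{k=0}^n t_k^2 .
\]
Dividing by $\sum_{k\le n}t_k\to\infty$, and using $\sum t_k^2<\infty$ (or only the weaker ratio condition), the right-hand side tends to $0$. Since $\min_{i\le n}F(x_i)\ge\min F$ pointwise, this gives $\EE[\min_{i\le n}F(x_i)]\to\min F$.

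The main obstacle is the first-order ray inequality. We must make rigorous the limit $T\to\infty$ in the comparison triangle, and check that the sign conventions of the Busemann subgradient match those of the step along $\xi$. This is where geodesic extension and at least two points ensure that the boundary cone is nonempty and that Busemann subgradients are well defined. Measurability of $x_n$, and integrability of $F(x_n)$, come from the oracle being measurable and from $F$ being Lipschitz with constant $mL$ on $C$. That Lipschitz bound itself follows from the subgradient inequality together with $|b_\xi(y)-b_\xi(x)|\le d(x,y)$.
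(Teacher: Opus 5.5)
Your proposal is correct and follows essentially the same route as the source proof and the paper's own analogue of it. It runs through the one-step Busemann descent inequality of Lemma \ref{BSLemma} (which the paper only cites, and which you rederive from the $\mathrm{CAT}(0)$ comparison in the triangles $(x,y,r(T))$ as $T\to\infty$), then conditional expectation using the independence of $i_n$ as in Lemma \ref{IneqMain} (with the factor $1/m$ correctly accounted for, since $F$ is a sum), and finally summation and comparison with the running minimum, which is the content of Lemma \ref{liminf}; note only that measurability and integrability are automatic here, since $x_n$ is a deterministic function of $(i_0,\dots,i_{n-1})$ and so takes finitely many values.
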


Further, they also obtain the following quantitative result: For the special case that $C$ has diameter bounded by $D>0$, Theorem 6.2 in \cite{GoodwinLewisLopezAcedoNicolae2025} further establishes the non-asymptotic guarantee
\[
\EE\left[\min_{i=0,\dots,n}F(x_i)\right]\leq \frac{2(1+\log(3))mLD}{\sqrt{n+2}}
\]
for all $n\geq 2$, using the parameter sequence $t_n:=\frac{D}{mL\sqrt{n+1}}$.

While the above result presents a (quantitative) approximation result for the method \eqref{SB0}, the possibilities of Busemann subgradients and methods such as \eqref{SB0} are only beginning to be explored (see e.g.\ also the recent \cite{CriscitielloKim2025}), and many questions remain, such as convergence behavior of the whole sequence, rates under regularity conditions, and extensions for solving general stochastic minimization problems. We address precisely those questions in the present paper.

\subsection{Main results and related work}

In this paper, we adapt the previous Busemann subgradient method introduced by Goodwin, Lewis, L\'opez-Acedo and Nicolae in \cite{GoodwinLewisLopezAcedoNicolae2025} to solve general stochastic minimization problems as in \eqref{problem}, and moreover provide a more detailed study on its asymptotic behavior. 

Concretely, let $(\Omega,\mathcal{F},\PP)$ and $(E,\mathcal{E},\mu)$ be probability spaces, with $(E,\mathcal{E},\mu)$ complete, and let $X$ be a separable Hadamard space with the geodesic extension property and at least two points.\footnote{These assumption pertain to the existence of boundary points and the well-definedness of various notions, as discussed in detail later.} Further, fix a closed convex nonempty subset $C\subseteq X$ together with a functional $f:E\times C\to \mathbb{R}$.

We now want to solve \eqref{problem}, that is we want to minimize $F(x):=\int f(e,x)\,d\mu(e)$ over $C$, assuming that $F$ is proper and that such a minimum actually exists, i.e.\ that $\mathrm{argmin}F\neq\emptyset$.

For that, we assume that $f$ has the following properties: At first, we assume that $f$ actually has Busemann subgradients and further is suitably measurable, i.e.\
\[
\begin{cases}f(e,\cdot)\text{ is Busemann subdifferentiable for any }e\in E\\
\text{and }f(\cdot,x)\text{ is measurable for all }x\in X.\end{cases}\tag{\textsf{A1}}\label{A1}
\]
Further, we impose a Lipschitz condition on the Busemann subgradients, i.e.\
\[
\begin{cases}
\text{there exists a constant } L>0\text{ such that for any }e\in E\text{ and any}\\
\text{Busemann subgradient }[\xi,s]\in CX^\infty\text{ of }f(e,\cdot)\text{ at }x\in C\text{, we have }s\leq L.
\end{cases}\tag{\textsf{A2}}\label{A2}
\]
These two assumptions on $f$ are derived from \cite{GoodwinLewisLopezAcedoNicolae2025} (see Assumption A therein, and recall Theorem \ref{old}), suitably extended to the present, more general context. However, both are also natural if seen in the broader context of (stochastic) subgradient methods, where measurability and a Lipschitz condition on the subgradients are among the most basic common assumptions (see e.g.\ \cite{NemirovskiJuditskyLanShapiro2009}). As mentioned before, further details on all involved objects will be given in Section \ref{initial} later on.

Motivated by \cite{GoodwinLewisLopezAcedoNicolae2025}, we consider the following stochastic Busemann subgradient method: Define
\[
x_{n+1}:=P_C(r_{x_n,\xi_{n}}(s_nt_n))\tag{\textsf{SB}}\label{SB}
\]
given a starting point $x_0\in C$ and sequences $(t_n)$ of positive reals as well as $(\zeta_{n+1})$ of random variables $\Omega\to E$, where $[\xi_n,s_n]=\mathsf{Busemann}_f(\zeta_{n+1},x_{n})$ represents a Busemann subgradient of $f(\zeta_{n+1},\cdot)$ at $x_n$, chosen using an oracle function $\mathsf{Busemann}_f$ for $f$. Crucially, we assume that we utilize an oracle which preserves measurability, namely 
\[
\begin{cases}
\text{whenever }x:\Omega\to C\text{ and }\zeta:\Omega\to E\text{ are measurable functions,}\\
\text{then }[\xi,s]=\mathsf{Busemann}_f(\zeta,x)\text{ is measurable as a function }\Omega\to CX^\infty.\end{cases}\tag{\textsf{A3}}\label{A3}
\]
While such an oracle also already appears in \cite{GoodwinLewisLopezAcedoNicolae2025} as discussed before, measurability thereof or of the associated iteration is not discussed, which however will occupy us here. Indeed, the question whether such a map always exists seems to be rather subtle, but it can be guaranteed in proper spaces, as will be discussed later.
 
Lastly, for the parameters we assume that 
\[
(\zeta_{n+1})\text{ is i.i.d.\ with distribution $\mu$ and }\sum_{n\in\mathbb{N}}t_n=+\infty, \sum_{n\in\mathbb{N}}t_n^2<+\infty.\tag{\textsf{Par}}\label{parameters}
\]
While not immediately obvious at first, we will later show that the assumptions \eqref{A1} -- \eqref{A3} in fact suffice to guarantee that $x_n$ is measurable for any $n\in\mathbb{N}$.

It should be noted that, as mentioned before, \eqref{problem} is a generalisation of the minimization problem for finite sums of functions \ref{problemFin} as studied in \cite{GoodwinLewisLopezAcedoNicolae2025}, which is reobtained by considering finite measure spaces. Further, the above method \eqref{SB} subsumes the method \eqref{SB0} in the same way, so that the present results also pertain to the method studied in \cite{GoodwinLewisLopezAcedoNicolae2025}.

Towards an asymptotic analysis of the method \eqref{SB}, we will show the following results. At first, we show that the sequence strongly converges in the presence of a local compactness assumption.

\begin{theorem}\label{SBstrongConv}
Let $(E,\mathcal{E},\mu)$ and $(\Omega,\mathcal{F},\PP)$ be probability spaces, with $(E,\mathcal{E},\mu)$ complete, and let $X$ be a locally compact Hadamard space with the geodesic extension property and at least two points and a closed convex nonempty subset $C\subseteq X$. Let $f:E\times C\to \mathbb{R}$ be a function with properties \eqref{A1} -- \eqref{A3} as above. Define $F(x):=\int f(e,x)\,d\mu(e)$ and assume $\mathrm{argmin}F\neq\emptyset$. Let $(x_n)$ be the iteration given by \eqref{SB} with $(t_n)$ and $(\zeta_{n+1})$, and assume \eqref{parameters}.

Then $(x_n)$ a.s.\ strongly converges to an $\mathrm{argmin}F$-valued random variable.
\end{theorem}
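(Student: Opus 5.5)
The plan follows the standard stochastic quasi-Fejér route. First I establish a one-step inequality for the iteration relative to an arbitrary $z\in\mathrm{argmin}F$. Then I apply the Robbins--Siegmund supermartingale theorem. Finally I use local compactness to pass from distance convergence to strong convergence.

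Fix $z\in C$ and write $\mathcal{F}_n:=\sigma(\zeta_1,\dots,\zeta_n)$, so that $x_n$ is $\mathcal{F}_n$-measurable; measurability of $x_n$ itself follows from \eqref{A1}--\eqref{A3}. The projection $P_C$ is nonexpansive. The defining property of a Busemann subgradient $[\xi_n,s_n]$ of $f(\zeta_{n+1},\cdot)$ at $x_n$ is $f(\zeta_{n+1},y)-f(\zeta_{n+1},x_n)\ge -s_n\bigl(b_{\xi_n}(y)-b_{\xi_n}(x_n)\bigr)$ for the Busemann function $b_{\xi_n}$. Combining these with the convexity and $1$-Lipschitzness of Busemann functions and a comparison estimate along the ray $r_{x_n,\xi_n}$, as in the analysis in \cite{GoodwinLewisLopezAcedoNicolae2025}, I obtain
\[
d(x_{n+1},z)^2\le d(x_n,z)^2-2t_n\bigl(f(\zeta_{n+1},x_n)-f(\zeta_{n+1},z)\bigr)+L^2t_n^2,
\]
using \eqref{A2}. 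Since $\zeta_{n+1}$ is independent of $\mathcal{F}_n$ with law $\mu$, conditioning gives
\[
\EE\bigl[d(x_{n+1},z)^2\mid\mathcal{F}_n\bigr]\le d(x_n,z)^2-2t_n\bigl(F(x_n)-F(z)\bigr)+L^2t_n^2 .
\]
This step needs integrability of $f(\cdot,x)$. I take it from properness of $F$ together with the uniform Lipschitz bound, which makes $f(e,\cdot)$ $L$-Lipschitz.

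Now take $z\in\mathrm{argmin}F$, so that $F(x_n)-F(z)\ge 0$. Robbins--Siegmund then yields two facts almost surely: $d(x_n,z)$ converges, and $\sum_n t_n(F(x_n)-\min F)<\infty$. Since $\sum t_n=\infty$, the second gives $\liminf F(x_n)=\min F$ almost surely. The null set depends on $z$, and removing this dependence is the main obstacle. I would fix a countable dense subset $D$ of $\mathrm{argmin}F$; a locally compact Hadamard space is proper, hence separable. Discarding a single null set, $\lim d(x_n,z)$ then exists for all $z\in D$ simultaneously. Because $z\mapsto d(x_n,z)$ is $1$-Lipschitz uniformly in $n$, the limit exists for every $z\in\mathrm{argmin}F$.

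Fix $\omega$ in the resulting full-measure set. The sequence $(x_n)$ is bounded, and $X$ is proper by Hopf--Rinow for complete locally compact geodesic spaces. Choose a subsequence $x_{n_k}$ with $F(x_{n_k})\to\min F$ and, by compactness, $x_{n_k}\to \bar x\in C$. Since $F$ is Lipschitz, hence continuous, we get $F(\bar x)=\min F$, so $\bar x\in\mathrm{argmin}F$. The limit $\lim d(x_n,\bar x)$ exists and equals $0$ along the subsequence, so $x_n\to\bar x$.

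It remains to check that $\omega\mapsto\bar x(\omega)$ is measurable. This holds because $\bar x$ is the pointwise a.s.\ limit of the measurable maps $x_n$ into the separable metric space $X$. Hence $\bar x$ is an $\mathrm{argmin}F$-valued random variable, as the theorem claims.
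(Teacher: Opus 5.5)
Your proposal is correct and follows essentially the paper's proof. The steps match: the one-step inequality of Lemma~\ref{IneqMain} (via Lemma~\ref{BSLemma}); Robbins--Siegmund; a countable dense subset of $\mathrm{argmin}F$ to make the convergence of $d(x_n,z)$ simultaneous (item (3) of Proposition~\ref{weakConvergence}); and a strong cluster point in $\mathrm{argmin}F$ combined with the Fej\'er argument of Lemma~\ref{nonlinearFejer}(2). You are slightly more careful than the paper in invoking Hopf--Rinow for properness and in using Lipschitz continuity of $F$ rather than lower semicontinuity. One harmless slip: with the paper's convention $\langle y,[\xi,s]\rangle=sb_\xi(y)$, the subgradient inequality reads $f(\zeta_{n+1},y)-f(\zeta_{n+1},x_n)\ge s_n\bigl(b_{\xi_n}(y)-b_{\xi_n}(x_n)\bigr)$, without the minus sign, since moving along $r_{x_n,\xi_n}$ decreases $b_{\xi_n}$. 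You only use the resulting cited inequality, so the argument is unaffected.
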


Lifting the local compactness assumption, the best we in general can expect is a weak convergence result. However, lifting such a result to a stochastic context like the above is highly nontrivial. Indeed, for related methods like the proximal point algorithm, it is a fundamental open problem whether its stochastic analog converges weakly even over separable Hilbert spaces (see \cite{Bacak2023}) and this also seems to be true for subgradient methods such as the above.

We instead show an ergodic convergence result, i.e.\ a convergence result for the ergodic average sequence $(\overline{x}_n)$ defined recursively by 
\[
\overline{x}_0:=x_0\text{ and }\overline{x}_{n+1}:=\frac{\overline{t}_n}{\overline{t}_{n+1}}\overline{x}_n\oplus \frac{t_{n+1}}{\overline{t}_{n+1}}x_{n+1}
\]
where $\overline{t}_n:=\sum_{k=0}^nt_k$, and
writing $(1-\lambda)x\oplus \lambda y$ for the point $\gamma(\lambda d(x,y))$ on the unique geodesic $\gamma:[0,d(x,y)]\to X$ joining $x$ and $y$.

This result however comes at the expense of a narrower class of spaces, being derived for separable Hadamard spaces satisfying condition $(\overline{Q}_4)$ as introduced in \cite{Kakavandi2013}, a slight extension of the $(Q_4)$ condition of Kirk and Panyanak \cite{KirkPanyanak2008}, which in particular includes Hilbert spaces, $\mathbb{R}$-trees and spaces of constant curvature, but is known to exclude certain gluings of Hadamard spaces (see \cite{EspinolaFernandezLeon2009}).

\begin{theorem}\label{SBweakErgodicConv}
Let $(E,\mathcal{E},\mu)$ and $(\Omega,\mathcal{F},\PP)$ be probability spaces, with $(E,\mathcal{E},\mu)$ complete, and let $X$ be a separable Hadamard space satisfying $(\overline{Q}_4)$ with the geodesic extension property and at least two points and a closed convex nonempty subset $C\subseteq X$. Let $f:E\times C\to \mathbb{R}$ be a function with properties \eqref{A1} -- \eqref{A3} as above. Define $F(x):=\int f(e,x)\,d\mu(e)$ and assume $\mathrm{argmin}F\neq\emptyset$. Let $(x_n)$ be the iteration given by \eqref{SB} with $(t_n)$ and $(\zeta_{n+1})$, and assume \eqref{parameters}.

Then $(\overline{x}_n)$, defined with $(t_n)$, a.s.\ weakly converges to an $\mathrm{argmin}F$-valued random variable.
\end{theorem}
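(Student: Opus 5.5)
The plan is to reduce the statement to a stochastic quasi-Fejér inequality for the iterates $(x_n)$, and then feed the ergodic averages into the general weak convergence theorem for such processes together with the nonlinear Pettis-type theorem. Fix $p\in\mathrm{argmin}F$ and let $\mathcal{F}_n:=\sigma(\zeta_1,\dots,\zeta_n)$, with respect to which $x_n$ is measurable.

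\textbf{Step 1: one-step inequality.} From the definition of a Busemann subgradient $[\xi_n,s_n]$ of $f(\zeta_{n+1},\cdot)$ at $x_n$, the CAT(0) comparison along the ray $r_{x_n,\xi_n}$, and the nonexpansivity of $P_C$ (as in \cite{GoodwinLewisLopezAcedoNicolae2025}), I expect
\[
d^2(x_{n+1},p)\leq d^2(x_n,p)-2t_n\bigl(f(\zeta_{n+1},x_n)-f(\zeta_{n+1},p)\bigr)+L^2t_n^2 .
\]
Here one uses $s_n\leq L$ by \eqref{A2}, and the Busemann function is $1$-Lipschitz with slope $-1$ along the ray. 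Since $\zeta_{n+1}$ is independent of $\mathcal{F}_n$ with law $\mu$, taking conditional expectations gives
\[
\EE[d^2(x_{n+1},p)\mid\mathcal{F}_n]\leq d^2(x_n,p)-2t_n(F(x_n)-\min F)+L^2t_n^2 .
\]

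\textbf{Step 2: consequences.} By Robbins--Siegmund and \eqref{parameters}, the sequence $d(x_n,p)$ converges a.s. for each $p$ and $\sum t_n(F(x_n)-\min F)<\infty$ a.s. By separability of $\mathrm{argmin}F$ (a closed subset of a separable space), one can pick a countable dense set of $p$ and, using the triangle inequality, get a single null set off which $d(x_n,p)$ converges for every $p\in\mathrm{argmin}F$. For the averages, convexity of $F$ along geodesics (it is an integral of convex functions) and Jensen for the iterated geodesic means give
\[
F(\overline{x}_n)-\min F\leq \overline{t}_n^{-1}\sum_{k\leq n}t_k\bigl(F(x_k)-\min F\bigr)\to0
\]
a.s., since $\overline t_n\to\infty$. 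Moreover, the iterated convex combination with $(\overline{Q}_4)$ controls the distances: $d(\overline{x}_n,p)$ stays bounded, and the Kakavandi-type quasilinearization (the $(\overline{Q}_4)$ property) transfers the convergence of $d(x_n,p)$ to the Opial-type behaviour required for the averages.

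\textbf{Step 3: weak limit points.} $F$ is convex and lower semicontinuous, hence weakly (i.e.\ $\Delta$-) lsc. So every weak cluster point of $(\overline{x}_n(\omega))$ lies in $\mathrm{argmin}F$. Uniqueness of the cluster point is then obtained by the argument in which $(\overline{Q}_4)$ is essential: for the weak limits $q_1,q_2\in\mathrm{argmin}F$, use the convergence of $d(x_n,q_i)$ and of $d^2(x_n,q_1)-d^2(x_n,q_2)$, and pass these through the weighted averages via the quasilinear functional $\langle\overrightarrow{q_1q_2},\overrightarrow{q_1\,\cdot}\rangle$. Under $(\overline{Q}_4)$ this functional characterizes weak convergence, which forces $q_1=q_2$. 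This is the step I expect to be the main obstacle, since the averages are nonlinear geodesic means and not convex combinations. My first attempt would be to prove by induction on $n$ that $\langle\overrightarrow{q_1q_2},\overrightarrow{q_1\overline{x}_n}\rangle\leq$ the corresponding $t$-weighted average of $\langle\overrightarrow{q_1q_2},\overrightarrow{q_1x_k}\rangle$, plus a vanishing error.

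\textbf{Step 4: measurability of the limit.} The limit is defined pointwise a.s. To see that it is a random variable, I invoke the nonlinear Pettis theorem: weak measurability, meaning measurability of $\omega\mapsto d(\overline{x}(\omega),y)$ tested through the functionals characterizing weak convergence, plus separability, yields strong measurability. Weak measurability holds because $\overline{x}$ is an a.s.\ weak limit of the measurable maps $\overline{x}_n$, and these are measurable by \eqref{A3} and continuity of the geodesic operations.
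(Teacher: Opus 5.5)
Your Steps 1--2 and the first half of Step 3 agree with the paper: $F(\overline{x}_n)\to\min F$ by convexity, and weak cluster points lie in $\mathrm{argmin}F$ because $F$ is weakly lsc. There is, however, a genuine gap. The uniqueness of the weak cluster point of the averages, which is the whole nonlinear content of the theorem, is not established, and the route you sketch does not work.

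Your planned induction
\[
\langle\overrightarrow{q_1q_2},\overrightarrow{q_1\overline{x}_n}\rangle\leq\sum_{k\le n}\tfrac{t_k}{\overline{t}_n}\langle\overrightarrow{q_1q_2},\overrightarrow{q_1x_k}\rangle+o(1)
\]
is an approximate geodesic convexity statement for $g:=d^2(\cdot,q_1)-d^2(\cdot,q_2)$. The CAT(0) inequality bounds $d^2(\cdot,q_1)$ from above at the geodesic mean. You would also need a \emph{lower} bound for $d^2(\cdot,q_2)$ there, which is a lower curvature bound. Without it, the defect at each step $\overline{x}_{n+1}=(1-\lambda_n)\overline{x}_n\oplus\lambda_nx_{n+1}$, with $\lambda_n=t_{n+1}/\overline{t}_{n+1}$, is in general of first order in $\lambda_n$, because $d(\overline{x}_n,x_{n+1})$ need not tend to $0$. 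The recursion $e_{n+1}\le(1-\lambda_n)e_n+c\lambda_n$ then gives no vanishing error.

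$(\overline{Q}_4)$ does not rescue this. It only says that the single sublevel set $F(q_1,q_2)=\{g\le 0\}$ is convex, not that $g$ is quasi-convex. For instance, in the hyperbolic plane, which satisfies $(\overline{Q}_4)$, the sets $\{g\le c\}$ with $c<0$ are not convex. For the same reason, your closing step (``this functional characterizes weak convergence, which forces $q_1=q_2$'') would need weak lower semicontinuity of both $g$ and $-g$. That is exactly the affine Hilbert-space structure you do not have.

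The missing idea, which is the paper's Lemma \ref{nonlinearPassty}, is to use $(\overline{Q}_4)$ only through level-zero sets, after a case split on the limits of $d(x_n,c_i)$ for two weak cluster points $c_1,c_2\in\mathrm{argmin}F$. These limits exist by your Step 2.

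\textbf{Case of distinct limits.} If $\lim d(x_n,c_1)<\lim d(x_n,c_2)$, then $x_n\in F(c_1,c_2)$ for all $n\ge n_0$. The weighted averages of the tail $(x_{n+n_0})$ therefore stay in the closed convex, hence weakly closed, set $F(c_1,c_2)$. By joint convexity of the metric, these tail averages are asymptotically close to $\overline{x}_n$ provided $d(x_n,x_{n+1})\to 0$. That follows from the pathwise form of your Step 1 inequality with $y=x_n$, which gives $d^2(x_{n+1},x_n)\le L^2t_n^2$. Hence $c_2\in F(c_1,c_2)$, i.e.\ $c_1=c_2$.

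\textbf{Case of equal limits.} Suppose $c_1\neq c_2$ and replace $c_1$ by a point $\gamma'(\lambda)\in[c_1,c_2]$. This point lies in $\mathrm{argmin}F$ because $\mathrm{argmin}F$ is convex. The CAT(0) inequality gives $\lim d^2(x_n,\gamma'(\lambda))<\lim d^2(x_n,c_2)$. The first case then forces $\gamma'(\lambda)=c_2$ for every $\lambda\in(0,1)$, a contradiction.

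Neither the asymptotic regularity nor the convexity of $\mathrm{argmin}F$ appears in your plan.

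Finally, in Step 4 simply cite the nonlinear Pettis proposition: its hypotheses are measurable $\overline{x}_n$, bounded and converging weakly a.s. Your justification via measurability of $\omega\mapsto d(\overline{x}(\omega),y)$ does not follow from weak convergence, since $d(\cdot,y)$ is not weakly continuous.
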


The question whether the method \eqref{SB} satisfies a weak ergodic convergence result in all separable Hadamard spaces remains an open question.

Our arguments for both theorems above are based on a general (weak) convergence theorem for stochastic processes in Hadamard spaces which confine to a stochastic variant of quasi-Fej\'er monotonicity (see Proposition \ref{weakConvergence} later on), which is also of independent interest. Fej\'er monotonicity is a fundamental notion in modern (convex) analysis and optimization, with many if not most iterative procedures confining to a variant thereof (see e.g.\ \cite{Combettes2001,Combettes2009} and also \cite{BauschkeCombettes2017}). In particular, Fej\'er monotonicity has played a crucial role in the weak convergence proof for the deterministic proximal point method in Hadamard spaces given in \cite{Bacak2013}, which relied on a previous (deterministic) weak convergence result for such sequences given in \cite{BacakSearstonSims2012}. Stochastic variants of (quasi-)Fej\'er monotonicity go back at least to the work of Ermol'ev \cite{Ermolev1969,Ermolev1971,ErmolevTuniev1968}, but the notion was subsequently further refined in the seminal work of Combettes and Pesquet \cite{CombettesPesquet2015} (see also \cite{CombettesPesquet2019}), where a general and abstract (weak) convergence theorem for such sequences is established over separable Hilbert spaces on which we have modeled our result on. Indeed, the approach to our central weak convergence result given in Proposition \ref{weakConvergence} is a rather immediate synthesis of the stochastic work \cite{CombettesPesquet2015} set in Hilbert spaces and the deterministic work of Ba\v{c}\'ak, Searston and Sims \cite{BacakSearstonSims2012} set in a metric context. The extensions for weak ergodic convergence over Hilbert spaces are due to Passty \cite{Passty1979} and related arguments over Hadamard spaces, involving condition $(\overline{Q}_4)$, have recently been given in the work of Khatibzadeh and Moosavi \cite{KhatibzadehMoosavi2023}, on which we base our ergodic convergence result (see Proposition \ref{weakErgodicConvergence} later on). In the course of these results, we further rely on a nonlinear variant of (the consequence of) Pettis' theorem \cite{Pettis1938}, stating that a weak limit of a sequence of measurable functions taking values in a separable Hadamard space is again measurable (see Proposition \ref{nonlinearPettis} later on). To our knowledge, this result is also new to the literature of Hadamard spaces.

Further considerations on stochastic quasi-Fej\'er monotonicity in a metric context can e.g.\ be found in the recent work \cite{NeriPischkePowell2025}, which while phrased in the context of strong stochastic regularity conditions (which among other things induce strong convergence) nevertheless further illustrates the range of different methods that immediately fall under this paradigm. Indeed, even though we only consider the above method \eqref{SB} in the present paper, we think that the general convergence result formulated here will be of use for the convergence analysis of further methods from stochastic convex optimization over nonlinear spaces.

One such method which we in particular want to highlight derives from the work of Bianchi \cite{Bianchi2016}, in particular its recent extension to a metric setting of nonpositive curvature given in \cite{Pischke2025}. Concretely, the work \cite{Bianchi2016} studies a corresponding stochastic proximal point algorithm for stochastically perturbed monotone operators over separable Hilbert spaces. Only almost sure weak ergodic convergence is known in Hilbert spaces (see \cite{Bianchi2016}), which can be improved to almost sure strong convergence of the original iteration in the context of a strong monotonicity assumption. This latter result was extended to a metric setting in \cite{Pischke2025}, and it remains an interesting open problem if also the almost sure weak ergodic convergence proven in \cite{Bianchi2016} can be lifted to the metric setting, where we hope that the present considerations will be helpful, at least over spaces satisfying condition $(\overline{Q}_4)$. While such a method might offer additional difficulties due to the use of general monotone vector fields, a more intermediate goal along the same vein that we want to highlight is the stochastic proximal point method as discussed in Hadamard spaces by Ba\v{c}\'ak \cite{Bacak2018}, generalising related work on the minimization of a finite sum of functions as discussed in the well-known work \cite{Bacak2014b} (in a similar way as the present paper generalises \cite{GoodwinLewisLopezAcedoNicolae2025}). Indeed, as mentioned before, it remains a fundamental open question whether already that method almost surely weakly converges even in separable Hilbert spaces (recall \cite{Bacak2023}), but a related result on almost sure weak ergodic convergence, at least over spaces satisfying condition $(\overline{Q}_4)$, could hopefully be derived via the present results (in a similar way as $(\overline{Q}_4)$ has already been used in \cite{KhatibzadehMoosavi2023} to provide such a result for the deterministic proximal splitting method considered in \cite{Bacak2014b}).

Lastly, we will prove the following result on rates under a strong convexity assumption:

\begin{theorem}\label{SBrates}
Let $(E,\mathcal{E},\mu)$ and $(\Omega,\mathcal{F},\PP)$ be probability spaces, with $(E,\mathcal{E},\mu)$ complete, and let $X$ be a separable Hadamard space with the geodesic extension property and at least two points and a closed convex nonempty subset $C\subseteq X$. Let $f:E\times C\to \mathbb{R}$ be a function with properties  \eqref{A1} -- \eqref{A3}  as above and assume additionally that $f(e,\cdot)$ is strongly convex with parameter $\alpha(e)>0$, i.e.\
\[
f(e,\gamma(t))\leq (1-t)f(e,\gamma(0))+tf(e,\gamma(1)) -t(1-t)\frac{\alpha(e)}{2}d^2(\gamma(0),\gamma(1))
\]
for any geodesic $\gamma:[0,1]\to C$ and any $t\in [0,1]$, where additionally $\underline{\alpha}:=\int \alpha\,d\mu>0$. Define $F(x):=\int f(e,x)\,d\mu(e)$ and assume $\mathrm{argmin}F\neq\emptyset$. Let $(x_n)$ be the iteration given by \eqref{SB} with $(t_n)$ and $(\zeta_{n+1})$, and assume \eqref{parameters}.

Then $(x_n)$ a.s.\ and in mean strongly converges to the unique minimizer $x^*$ of $F$. Moreover, the following rates of convergence apply: Let $\chi:(0,\infty)\to\mathbb{N}$ and $\theta:\mathbb{N}\times (0,\infty)\to\mathbb{N}$ be such that 
\[
\forall \varepsilon>0\left(\sum_{n=\chi(\varepsilon)}^\infty t_n^2<\varepsilon\right) \text{ and }\forall b>0\ \forall k\in\mathbb{N}\left(\sum_{n=k}^{\theta(k,b)}t_n\geq b\right).
\]
Let $T > \sum_{n=0}^\infty t_n^2$. Lastly, let $b>0$ be such that $b>d^2(x_0,x^*)$. Then
\[
\forall \varepsilon>0\ \forall n\geq \rho(\varepsilon)\left(\EE[d^2(x_n,x^*)]<\varepsilon\right)
\]
with rate $\rho(\varepsilon):=\theta(\chi(\varepsilon/2L^2),8(b+L^2T)/\varepsilon\underline{\alpha})$ and 
\[
\forall \lambda,\varepsilon>0\left(\PP\left(\exists n\geq\rho'(\lambda,\varepsilon)\left(d^2(x_n,x^*)\geq\varepsilon\right)\right)<\lambda\right)
\]
with rate $\rho'(\lambda,\varepsilon):=\rho(\lambda\varepsilon)$.
\end{theorem}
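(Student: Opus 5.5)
The plan is to derive a one-step stochastic recursive inequality for $d^2(x_n,x^*)$ that contains a contraction term coming from strong convexity. The rates will then follow from a quantitative Robbins--Siegmund-type argument.

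\textbf{Step 1: basic estimate.} By nonexpansivity of $P_C$ and $x^*\in C$, we have $d(x_{n+1},x^*)\leq d(r_{x_n,\xi_n}(s_nt_n),x^*)$. From the standard estimate for Busemann subgradients used in \cite{GoodwinLewisLopezAcedoNicolae2025}, i.e.\ comparing along the ray with the Busemann function $b_{\xi_n}$ and using the subgradient inequality $f(\zeta_{n+1},y)-f(\zeta_{n+1},x_n)\geq s_n(b_{\xi_n}(x_n)-b_{\xi_n}(y))$, I obtain
\[
d^2(x_{n+1},x^*)\leq d^2(x_n,x^*)-2t_n\bigl(f(\zeta_{n+1},x_n)-f(\zeta_{n+1},x^*)\bigr)+t_n^2L^2,
\]
where \eqref{A2} gives $s_n\le L$.

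\textbf{Step 2: strong convexity.} Strong convexity together with the Busemann subgradient inequality gives the sharper bound
\[
f(e,x_n)-f(e,x^*)\geq \frac{\alpha(e)}{2}d^2(x_n,x^*).
\]
To see this, take $y=\gamma(t)$ on the geodesic from $x_n$ to $x^*$ and use convexity of $b_\xi$ along geodesics, so that $b_\xi(x_n)-b_\xi(\gamma(t))\geq t(b_\xi(x_n)-b_\xi(x^*))$. Combining this with the strong convexity inequality, dividing by $t$ and letting $t\to 0$ yields $f(e,x^*)-f(e,x_n)\geq s(b(x_n)-b(x^*))+\frac{\alpha}{2}d^2$. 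Since $s(b(x_n)-b(x^*))\geq f(x_n)-f(x^*)$ fails in general, I must be careful here. The cleaner route is to first handle the $F$-term: taking conditional expectations with respect to $\mathcal{F}_n=\sigma(\zeta_1,\dots,\zeta_n)$, independence of $\zeta_{n+1}$ gives $\EE[f(\zeta_{n+1},x_n)-f(\zeta_{n+1},x^*)\mid\mathcal{F}_n]=F(x_n)-F(x^*)$. Next, $F$ is $\underline\alpha$-strongly convex and $x^*$ minimizes it, so $F(x_n)-F(x^*)\geq \frac{\underline\alpha}{2}d^2(x_n,x^*)$ by evaluating at $\gamma(t)$ and letting $t\to0$. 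This avoids the issue above entirely. The result is
\[
\EE[d^2(x_{n+1},x^*)\mid\mathcal{F}_n]\leq (1-\underline\alpha t_n)d^2(x_n,x^*)+L^2t_n^2.
\]
Measurability of the $x_n$ and the integrability needed for conditional expectations come from the earlier measurability result under \eqref{A1}--\eqref{A3}.

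\textbf{Step 3: rates.} Let $a_n=\EE[d^2(x_n,x^*)]$. Then $a_{n+1}\leq(1-\underline\alpha t_n)a_n+L^2t_n^2$ and $a_n\leq b+L^2T$ for all $n$. Set $k=\chi(\varepsilon/2L^2)$. For $n\geq k$, unrolling gives
\[
a_{n+1}\le \prod_{j=k}^{n}(1-\underline\alpha t_j)\,a_k+\varepsilon/2\le e^{-\underline\alpha\sum_{j=k}^n t_j}(b+L^2T)+\varepsilon/2.
\]
Choosing $\sum_{j=k}^{n}t_j\geq 8(b+L^2T)/(\varepsilon\underline\alpha)$, via $e^{-x}\le 1/x$, makes the first term at most $\varepsilon/8<\varepsilon/2$, which gives $\rho$. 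Here I assume $\underline\alpha t_j\le1$; otherwise I use $\max\{0,1-\underline\alpha t_j\}$. I expect this step to be the main bookkeeping obstacle.

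\textbf{Step 4: a.s.\ convergence and $\rho'$.} From Step 2, $d^2(x_n,x^*)+L^2\sum_{j\geq n}t_j^2$ is a nonnegative supermartingale. Ville's maximal inequality then gives
\[
\PP\Bigl(\sup_{m\ge n}d^2(x_m,x^*)\ge\varepsilon\Bigr)\le \frac{a_n+L^2\sum_{j\ge n}t_j^2}{\varepsilon}.
\]
Evaluating at $n=\rho(\lambda\varepsilon)$, with the same bookkeeping as in Step 3, yields the bound $\lambda$. Almost sure convergence and convergence in mean follow immediately.
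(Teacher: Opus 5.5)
The gap is in Step 3: unrolling the contraction proves the rates only from $\rho(\varepsilon)+1$ on, not from $\rho(\varepsilon)$. Steps 1, 2 and 4 are sound and agree with the paper (Lemma \ref{BSLemma}, Lemma \ref{IneqMain}, the supermartingale $X_n=d^2(x_n,x^*)+L^2\sum_{j\ge n}t_j^2$ and Ville's inequality). Letting $t\to0$ even gives $F(x)-\min F\ge\frac{\underline{\alpha}}{2}d^2(x,x^*)$, sharper than the paper's midpoint bound with $\frac{\underline{\alpha}}{4}$.

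There are two slips that do not propagate. With the paper's pairing, the subgradient inequality is $f(y)-f(x_n)\ge s_n(b_{\xi_n}(y)-b_{\xi_n}(x_n))$, not the reversed difference you wrote. The opening per-$e$ claim of Step 2 is false, since $x^*$ need not minimize $f(e,\cdot)$; just delete it.

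Now the gap itself. With $k=\chi(\varepsilon/2L^2)$ and $B=8(b+L^2T)/(\varepsilon\underline{\alpha})$, your estimate reads
\[
a_m\le \prod_{j=k}^{m-1}\max\{0,1-\underline{\alpha} t_j\}\,a_k+L^2\sum_{j=k}^{m-1}t_j^2 .
\]
However, $\theta$ only guarantees $\sum_{j=k}^{\theta(k,B)}t_j\ge B$, so you can invoke it only when $m-1\ge\theta(k,B)$, i.e.\ for $m\ge\rho(\varepsilon)+1$. At $m=\rho(\varepsilon)$ your bound can say nothing. If $t_k\ge B$, one may have $\theta(k,B)=k$, and then you only know $a_k<b+L^2T$. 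The same shift affects $\rho'$ in Step 4. Since the theorem asserts these specific rates, this is a real, if small, gap.

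The paper's route closes it by \emph{summing} instead of unrolling. Taking expectations in Lemma \ref{IneqMain} and summing (Lemma \ref{qihou}) gives $\sum_n t_n\EE[F(x_n)-\min F]<b+L^2T$; with your constant, even $\underline{\alpha}\sum_n t_na_n<b+L^2T$. A pigeonhole argument over the window $[k;\theta(k,B)]$ (Lemmas \ref{sumconv} and \ref{liminf}) then produces an index $n$ with $k\le n\le\rho(\varepsilon)$ and $a_n<\varepsilon/2$. The paper gets this from $\EE[F(x_n)]-\min F<\varepsilon\underline{\alpha}/8$ and the $\frac{\underline{\alpha}}{4}$ bound; you would even get $\varepsilon/8$. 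Monotonicity of $\EE[X_m]$ then gives
\[
a_m\le\EE[X_m]\le\EE[X_n]<\frac{\varepsilon}{2}+L^2\sum_{j\ge k}t_j^2<\varepsilon\quad\text{for all }m\ge n,
\]
in particular for all $m\ge\rho(\varepsilon)$. The bound $\EE[X_{\rho(\lambda\varepsilon)}]<\lambda\varepsilon$ then feeds Ville's inequality and gives exactly $\rho'$.

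The structural difference is this. Pigeonholing uses every step in the window, including $t_{\rho(\varepsilon)}$, and the supermartingale carries the good index forward. Unrolling only uses contraction factors strictly before the index being bounded. With the summation argument you also no longer need the contraction form $(1-\underline{\alpha}t_n)$ or the case distinction on $\underline{\alpha}t_j\le 1$. Your Steps 1, 2 and 4 combined with it give a complete proof of the stated rates.
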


The proof essentially relies on a similar approach as the results given by Neri, Powell and the author in \cite{NeriPischkePowell2025} for the quantitative asymptotic behavior for general stochastic processes subscribing to a monotonicity condition.\footnote{The results from \cite{NeriPischkePowell2025}, and likewise the present results regarding rates of convergence, have been obtained using the logic-based methodology of \emph{proof mining} \cite{Kohlenbach2008,Kohlenbach2019}, and are part of a series of recent applications of these methods to probability theory and stochastic optimization  \cite{NeriPischke2024,NeriPischkePowell2025,NeriPowell2024,NeriPowell2025,PischkePowell2025}. As common in proof mining however, this paper avoids any reference to mathematical logic.}

Indeed, the above result can in fact be substantially extended to other regularity conditions on the mean function $\underline{f}(x):=\int f(e,x)\,d\mu(e)$ beyond strong convexity, including generalized weak sharp minima, as will be discussed in more detail in forthcoming work by the author and Thomas Powell on the general theory of such regularity conditions and their relation to quantitative convergence analyses for stochastic processes, extending \cite{NeriPischkePowell2025}.

\section{Preliminaries}\label{preliminaries}

We now discuss the few preliminary definitions, results and notations that we require throughout. As mentioned in the introduction, beyond the results indicated in this paper we refer to \cite{AlexanderKapovitchPetrunin2023,Bacak2014a,BridsonHaefliger1999} for a comprehensive overview of geodesic metric spaces and their properties, in particular to \cite{Bacak2014a} for aspects of (stochastic) optimization. Further, beyond the results indicated, we refer to e.g.\ \cite{Klenke2020} for a standard textbook on probability theory.

Let $(X,d)$ be a metric space. A geodesic is an isometry $\gamma:[0,l]\to X$. We say that it joins $x=\gamma(0)$ and $y=\gamma(l)$ (where necessarily $l=d(x,y)$). $X$ is called (uniquely) geodesic if every two points are joined by a (unique) geodesic. We call the image of a geodesic such as the above a geodesic segment, and in the uniquely geodesic case denote it by $[x,y]$. A geodesic ray is an isometry $r:[0,\infty)\to X$, and we say that $r$ issues from $r(0)$. A space $X$ has the geodesic extension property if for all $x\neq y\in X$, there is a ray $r:[0,\infty)\to X$ issuing from $x$ such that $r(t)=y$ for some $t>0$. 

A geodesic metric space $(X,d)$ is called a $\CAT$ space (also called a space of nonpositive curvature in the sense of Alexandrov) if it satisfies 
\[
d^2(\gamma(tl),x)\leq (1-t)d^2(\gamma(0),x)+td^2(\gamma(l),x)-t(1-t)d^2(\gamma(0),\gamma(l))
\]
for all $x\in X$ and all geodesics $\gamma:[0,l]\to X$ (that is, an extension of the so-called Bruhat-Tits $\mathrm{CN}$-inequality \cite{BruhatTits1972} to geodesics). Any $\CAT$ space is uniquely geodesic and a complete $\CAT$ space is called a Hadamard space. Further, a $\CAT$ space has nonpositive curvature in the sense of Busemann (see e.g.\ \cite{Bacak2014a,BridsonHaefliger1999}) and hence the metric is jointly convex in the sense that $d(\gamma(tl),\eta(tm))$ is convex on $t\in [0,1]$ for all geodesics $\gamma:[0,l]\to X$ and $\eta:[0,m]\to X$ (see e.g.\ Proposition 1.1.5 in \cite{Bacak2014a}).

Weak convergence in $\CAT$ spaces goes back to the work of Jost \cite{Jost1994} and is often called $\Delta$-convergence following the work of Kirk and Panyanak \cite{KirkPanyanak2008} (we refer in particular to the discussion in \cite{Bacak2013} on that matter). We define weak convergence here as follows (see e.g.\ \cite{Bacak2014a}):

Given a bounded sequence $(x_n)\subseteq X$ and a point $x\in X$, their asymptotic radius is given by
\[
r(x_n,x):=\limsup_{n\to\infty}d^2(x_n,x)
\]
and the general asymptotic radius of the sequence $(x_n)$ is given by
\[
r(x_n):=\inf_{x\in X}r(x_n,x).
\]
A point $x\in X$ is called an asymptotic center of $(x_n)$ if $r(x_n,x)=r(x_n)$. In Hadamard spaces, asymptotic centers exist and are unique (see e.g.\ Proposition 7 in \cite{DhompongsaKirkSims2006}). Further, each bounded sequence has a weak cluster point (see e.g.\ Proposition 3.1.2 in \cite{Bacak2014a}).

We say that a bounded sequence $(x_n)$ weakly converges to $x\in X$, written $x_n\to^w x$, if $x$ is the asymptotic center of each subsequence of $(x_n)$. A point $x\in X$ is a weak cluster point of $(x_n)$ if there is a subsequence $(x_{n_k})$ of $(x_n)$ with $x_{n_k} \to^w x$.

We write $\mathfrak{W}(x_n)$ for the set of all weak cluster points of $(x_n)$ and $\mathfrak{S}(x_n)$ for the set of all strong cluster points of $(x_n)$, the latter defined as usual using the metric.

A set $C\subseteq X$ is called convex if the geodesics between any two points in $C$ are contained in $C$, and we call a function $g:C\to \mathbb{R}$ convex if
\[
g(\gamma(tl))\leq (1-t)g(\gamma(0))+tg(\gamma(l))
\]
for any geodesic $\gamma:[0,l]\to C$ and any $t\in [0,1]$. Further, we call $g$ lower-semicontinuous (lsc) if
\[
\liminf_{n\to\infty} g(x_n)\geq g(x)
\]
whenever $(x_n)\subseteq C$ is a sequence such that $x_n\to x$. Crucially, every convex lsc function on a Hadamard space is also weakly lower-semincontinuous (weakly lsc, see \cite{Bacak2013}), i.e.\ it satisfies the above inequality even if $(x_n)\subseteq C$ is a sequence such that $x_n\to^w x$. A particular example of a convex function on a Hadamard spaces is the Busemann function $b_r:X\to\mathbb{R}$ associated to a ray $r$, defined by
\[
b_r(x):=\lim_{t\to\infty} (d(x,r(t))-t),
\]
which is nonexpansive (1-Lipschitz), convex and satisfies $b_r(r(0))=0$ (see Example 2.2.10 in \cite{Bacak2014a}).

Throughout this paper, if not stated otherwise, we let $(X,d)$ be a separable Hadamard space and $(\Omega,\mathcal{F},\PP)$ as well as $(E,\mathcal{E},\mu)$ be two probability spaces, with $(E,\mathcal{E},\mu)$ complete. All probabilistic notions such as measurability, random variables, almost sureness (a.s.), expectation, etc., are understood relative to the space $(\Omega,\mathcal{F},\PP)$, if not stated otherwise. In particular, an $X$-valued random variable is a map $x:\Omega\to X$ which is measurable relative to $\mathcal{F}$ and the Borel $\sigma$-algebra $\mathcal{B}(X)$ of that space. We denote (conditional) expectations over $(\Omega,\mathcal{F},\PP)$ by $\EE$. All properties as well as (in-)equalities between random variables are understood to hold only almost surely, if not stated otherwise. Sometimes we are working on general measurable spaces $(T,\mathcal{T})$ or even measure spaces $(T,\mathcal{T},\tau)$. In such cases, we are always quite explicit on regarding measurability notions, and in particular use the expression ``almost everywhere'' (a.e.) instead. 

\section{A nonlinear variant of Pettis' theorem}

In this section, we prove the following measurability result for weak limits in Hadamard spaces:

\begin{proposition}\label{nonlinearPettis}
Let $(T,\mathcal{T},\tau)$ be a finite measure space and let $X$ be a separable Hadamard space. Let $(x_n)$ be a sequence of $\mathcal{T}$/$\mathcal{B}(X)$-measurable functions and let $x:T\to X$ be a function such that the $(x_n)$ are bounded and $x_n\to^w x$ almost everywhere. Then there is a $\mathcal{T}$/$\mathcal{B}(X)$-measurable $y$ which is equal to $x$ almost everywhere.

If $T$ is complete, then $x$ itself is measurable. Further, $T$ can be $\sigma$-finite in that case.
\end{proposition}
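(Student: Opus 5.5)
The plan is to write the weak limit $x(t)$ as the unique minimizer of a random convex functional. We then approximate it by measurable, countably-valued selections taken from a countable dense subset $D=\{d_1,d_2,\dots\}\subseteq X$, and pass to the limit pointwise. For $t\in T$ and $z\in X$ put
\[
\varphi_t(z):=\limsup_{n\to\infty} d^2(x_n(t),z),\qquad r(t):=\inf_{z\in X}\varphi_t(z).
\]
This is defined wherever $(x_n(t))$ is bounded. For a fixed $z$, the map $t\mapsto d^2(x_n(t),z)$ is measurable, since $x_n$ is $\mathcal{T}/\mathcal{B}(X)$-measurable and $d(\cdot,z)$ is continuous. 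Hence $t\mapsto\varphi_t(z)$ is measurable as a $\limsup$.

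Next I record two facts about $\varphi_t$ for fixed $t$. First, the inequality
\[
|d^2(a,z)-d^2(a,w)|\le (d(a,z)+d(a,w))\,d(z,w),
\]
together with boundedness of $(x_n(t))$, shows that $\varphi_t$ is locally Lipschitz. Therefore $r(t)=\inf_{k}\varphi_t(d_k)$, and $t\mapsto r(t)$ is measurable.

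Second, $\varphi_t$ satisfies a quadratic growth bound around its minimizer. Take $t$ in the full-measure set where $x_n(t)\to^w x(t)$. Then $x(t)$ is the asymptotic center of $(x_n(t))$, i.e.\ the minimizer of $\varphi_t$ with $\varphi_t(x(t))=r(t)$. Apply the $\CAT$ inequality to the midpoint $m$ of $[x(t),z]$ for each $n$ and take $\limsup$. This gives
\[
r(t)\le\varphi_t(m)\le\tfrac12 r(t)+\tfrac12\varphi_t(z)-\tfrac14 d^2(z,x(t)),
\qquad\text{so}\qquad
d^2(z,x(t))\le 2\bigl(\varphi_t(z)-r(t)\bigr).
\]

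Now define $z_k(t):=d_{j}$, where $j$ is the least index with $\varphi_t(d_j)<r(t)+1/k$. Such an index exists by the density argument above. Each $z_k$ takes countably many values, and $\{z_k=d_j\}$ is a finite Boolean combination of sets of the form $\{\varphi_t(d_i)<r(t)+1/k\}$, all of which are measurable. Hence $z_k$ is measurable. By the growth bound, $d^2(z_k(t),x(t))<2/k$ on the good set, so $z_k(t)\to x(t)$ there.

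The main obstacle is that this good set need not itself be measurable, so we cannot simply define $y$ as $x$ on it. To avoid this, let $A$ be the set of $t$ for which $(z_k(t))$ is Cauchy. Since $D$ is countable and the $z_k$ are measurable, $A=\bigcap_m\bigcup_N\bigcap_{k,l\ge N}\{d(z_k,z_l)<1/m\}$ is measurable. Set $y:=\lim_k z_k$ on $A$, using completeness of $X$, and $y:=d_1$ off $A$. A pointwise limit of measurable maps into a metric space is measurable: for closed $F$ one has $y^{-1}(F)\cap A=\bigcap_m\bigcup_N\bigcap_{k\ge N}\{d(z_k,F)<1/m\}\cap A$. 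So $y$ is $\mathcal{T}/\mathcal{B}(X)$-measurable, and $y=x$ on the good set, which contains a full-measure set.

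If $\tau$ is complete, then $\{x\neq y\}$ is contained in a null set and is therefore measurable. It follows that $x^{-1}(B)$ differs from $y^{-1}(B)$ only by a subset of a null set, so $x$ is measurable. Finally, the construction above never used finiteness of $\tau$, only the a.e.\ statement. So in the $\sigma$-finite complete case the same argument applies verbatim; alternatively, one can run it on each piece of a countable partition into finite-measure sets and glue the pieces.
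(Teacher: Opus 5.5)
Your argument is correct, and it takes a genuinely different route from the paper.

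\textbf{The paper's route.} The paper regards $(t,y)\mapsto\limsup_n d^2(x_n(t),y)$ as a Carath\'eodory function on the good set. It then uses Lemma~\ref{minMeas}, resting on the Aliprantis--Border result on zero sets of Carath\'eodory functions, to get graph measurability of the argmin map. After that it invokes a selection theorem. In the general case this is Aumann's theorem, which is where finiteness of $\tau$ enters and why the selection holds only a.e. In the complete $\sigma$-finite case it is Kuratowski--Ryll-Nardzewski. The identification of the selection with $x$ uses only existence and uniqueness of asymptotic centers.

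\textbf{Your route.} You replace all of the selection machinery with the quantitative strong convexity of $d^2$ in a $\CAT$ space, namely the growth bound $d^2(z,x(t))\le 2(\varphi_t(z)-r(t))$. This bound turns your explicit, countably-valued $1/k$-approximate minimizers $z_k$ into a sequence converging pointwise to $x(t)$.

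\textbf{Comparison.} Your approach buys a self-contained, elementary proof and a stronger conclusion. The measurable $y$ agrees with $x$ at every point where $(x_n(t))$ is bounded and weakly convergent to $x(t)$. No finiteness, $\sigma$-finiteness or completeness of $\tau$ is needed; completeness matters only for concluding that $x$ itself is measurable. The paper's approach, in contrast, needs only existence and uniqueness of minimizers of a continuous random functional. Yours additionally needs approximate minimizers to be forced close to the minimizer, a Tikhonov-type well-posedness that the $\CAT$ inequality happens to supply.

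\textbf{A small fix.} On the set $\{t:\varphi_t(d_1)=+\infty\}$, where $(x_n(t))$ is unbounded, you have $r(t)=+\infty$, so no index $j$ with $\varphi_t(d_j)<r(t)+1/k$ exists. This set is measurable and null, so you just need to assign a default value there, e.g.\ $z_k(t):=d_1$.
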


In separable Hilbert spaces, the above result is a consequence (see Corollary 1.13 in \cite{Pettis1938}) of the seminal theorem of Pettis on weakly measurable functions (see Theorem 1.1 in \cite{Pettis1938}). Indeed, compared to \cite{Pettis1938}, we are here confined to finite measure spaces.

Our proof is rather simple, relying only on a few results from measurable selection theory. We generally refer to \cite{AliprantisBorder2006,AubinFrankowska2009,CastaingValadier1977} for further background on that area. Given a complete separable metric space $X$ and a measurable space $(T,\mathcal{T})$, call a set-valued map $\varphi:T\to 2^X$ graph measurable if
\[
\mathrm{gra}(\varphi):=\{(t,x)\in T\times X\mid x\in\varphi(t)\}\in\mathcal{T}\otimes\mathcal{B}(X).
\]
Over complete $\sigma$-finite measure spaces, and if $\varphi$ has nonempty closed images, this is equivalent to the (weak) measurability of $\varphi:T\to 2^X$, that is that 
\[
\varphi^{-1}(C):=\{t\in T\mid \varphi(t)\cap C\neq\emptyset\}\in\mathcal{T}
\]
for all open sets $C\subseteq X$ (see e.g.\ Theorem 8.1.4 in \cite{AubinFrankowska2009}).

The key result we need is the following on the graph measurability of minimizing maps.

\begin{lemma}\label{minMeas}
Let $(T,\mathcal{T})$ be a measurable space and let $X$ be a complete separable metric space. Further, let $g:T\times X\to\mathbb{R}$ be a Carath\'eodory function. Then the functions 
\[
m(t):=\inf_{x\in X}g(t,x)\text{ and }M(t):=\mathrm{argmin}_{x\in X}g(t,x)
\]
are measurable and graph measurable, respectively.
\end{lemma}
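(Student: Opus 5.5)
Fix a countable dense set $D=\{d_k\}_{k\in\mathbb{N}}\subseteq X$; it exists because $X$ is separable. The plan is to show first that $m$ is measurable by reducing the infimum to a countable one, and then to write $\mathrm{gra}(M)$ as the preimage of $\{0\}$ under a jointly measurable function built from $g$ and $m$.

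\emph{Step 1: measurability of $m$.} Since $g$ is Carath\'eodory, $g(t,\cdot)$ is continuous for each fixed $t$. Hence, for every $t$,
\[
m(t)=\inf_{x\in X}g(t,x)=\inf_{k\in\mathbb{N}}g(t,d_k).
\]
For any $x\in X$, choose $d_{k_j}\to x$; then $g(t,d_{k_j})\to g(t,x)$, so the countable infimum is at most $g(t,x)$. Each map $t\mapsto g(t,d_k)$ is $\mathcal{T}$-measurable, again because $g$ is Carath\'eodory. A countable infimum of measurable extended-real functions is measurable, so $m:T\to[-\infty,\infty)$ is measurable.

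\emph{Step 2: joint measurability of $g$.} We need $g$ to be $\mathcal{T}\otimes\mathcal{B}(X)$-measurable; this is the standard fact that Carath\'eodory functions on separable metrizable spaces are jointly measurable. If I want to prove it directly, I would approximate. Partition $X$ into countably many Borel sets $A^{(j)}_k$ of diameter below $1/j$, each containing a chosen point $d^{(j)}_k\in D$ (e.g.\ $A^{(j)}_k:=B(d_k,1/(2j))\setminus\bigcup_{i<k}B(d_i,1/(2j))$). Set
\[
g_j(t,x):=g\bigl(t,d^{(j)}_k\bigr)\quad\text{for }x\in A^{(j)}_k.
\]
Each $g_j$ is product-measurable, being a countable sum of measurable rectangles-type pieces. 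By continuity of $g(t,\cdot)$ we get $g_j\to g$ pointwise, so $g$ is product-measurable.

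\emph{Step 3: graph measurability of $M$.} By definition,
\[
\mathrm{gra}(M)=\{(t,x)\mid g(t,x)=m(t)\}=\{(t,x)\mid g(t,x)-m(t)\le 0\}.
\]
The map $(t,x)\mapsto m(t)$ is product-measurable, being the composition of the measurable projection onto $T$ with $m$. The map $(t,x)\mapsto g(t,x)$ is product-measurable by Step 2. Care is needed only where $m(t)=-\infty$: there the argmin is empty, and the set $\{(t,x)\mid m(t)>-\infty,\ g(t,x)\le m(t)\}$ is still in $\mathcal{T}\otimes\mathcal{B}(X)$. Hence $\mathrm{gra}(M)\in\mathcal{T}\otimes\mathcal{B}(X)$.

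The main obstacle is Step 2, joint measurability of the Carath\'eodory function. It is routine but is the only point where the separability and metric structure of $X$ are genuinely used beyond the countable-infimum reduction in Step 1. I would simply cite it (e.g.\ Lemma 4.51 in \cite{AliprantisBorder2006}) and include the approximation above as a sketch.
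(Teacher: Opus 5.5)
Your proposal is correct and follows essentially the same route as the paper. Both arguments reduce $m$ to a countable infimum over a dense set and then realize $\mathrm{gra}(M)$ as the set where $g-m$ vanishes (equivalently $\{g\le m\}$). The only difference is that you prove joint measurability of the Carath\'eodory function directly, whereas the paper cites the zero-set result (Corollary 18.8 in \cite{AliprantisBorder2006}), which is an immediate consequence of that same fact. You also treat the case $m(t)=-\infty$ explicitly, which the paper leaves implicit.

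One minor slip: your sets $A^{(j)}_k$ need not contain $d_k$ and may be empty. Taking $d^{(j)}_k:=d_k$ still works, since every $x\in A^{(j)}_k$ lies within $1/(2j)$ of $d_k$.
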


This result will then be combined with the following selection theorem of Aumann \cite{Aumann1969}.

\begin{theorem}[\cite{Aumann1969}, see also Corollary 18.27 in \cite{AliprantisBorder2006}]\label{measSelect}
Let $(T,\mathcal{T},\tau)$ be a finite measure space and let $X$ be a complete separable metric space. Let $\varphi:T\to 2^X$ be graph measurable with nonempty values. Then there is a measurable function $x:T\to X$ such that $x(t)\in \varphi(t)$ almost everywhere.
\end{theorem}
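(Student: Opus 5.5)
The statement is the classical Aumann selection theorem, and I would prove it by transferring the problem to a Polish space, applying the von Neumann (Jankov) selection theorem there, and then correcting the resulting universally measurable selector on a null set. Throughout, $X$ is Polish and $G:=\mathrm{gra}(\varphi)\in\mathcal{T}\otimes\mathcal{B}(X)$ has nonempty sections $G_t=\varphi(t)$.

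\emph{Step 1: reduction to a countably generated $\sigma$-algebra.} Every set in a product $\sigma$-algebra already lies in the $\sigma$-algebra generated by countably many measurable rectangles. Hence there are $A_1,A_2,\dots\in\mathcal{T}$ such that $G\in\sigma(A_k:k\in\mathbb{N})\otimes\mathcal{B}(X)$. Define $h:T\to\{0,1\}^{\mathbb{N}}$ by $h(t):=(1_{A_k}(t))_k$; this map is $\mathcal{T}$-measurable and satisfies $\sigma(h)=\sigma(A_k:k\in\mathbb{N})$. A monotone class argument then gives a Borel set $G'\subseteq\{0,1\}^{\mathbb{N}}\times X$ with $G=(h\times\mathrm{id})^{-1}(G')$. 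In particular $G'_{h(t)}=\varphi(t)\neq\emptyset$ for every $t$, so $h(T)\subseteq S:=\mathrm{proj}(G')$, and $S$ is analytic.

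\emph{Step 2: selection on the Polish side.} By the von Neumann--Jankov selection theorem there is a map $s:S\to X$ with $(z,s(z))\in G'$ for all $z\in S$, and $s$ is measurable with respect to the $\sigma$-algebra generated by the analytic sets. Consequently $s$ is universally measurable (extend it arbitrarily off $S$, which is itself universally measurable). Let $\nu:=h_*\tau$ be the image measure on $\{0,1\}^{\mathbb{N}}$; this is where finiteness (or $\sigma$-finiteness) of $\tau$ enters. Since $s$ is $\nu$-measurable and $X$ is separable metric, there is a Borel map $s':\{0,1\}^{\mathbb{N}}\to X$ and a Borel set $B\subseteq S$ with $\nu(B^c)=0$ and $s=s'$ on $B$. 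To see this, approximate $s$ by countably-valued maps, or use that every $\nu$-measurable set differs from a Borel set by a $\nu$-null set.

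\emph{Step 3: pulling back.} Put $x:=s'\circ h$. This map is $\mathcal{T}/\mathcal{B}(X)$-measurable as a composition of measurable maps. For $t\in h^{-1}(B)$ we have $(h(t),x(t))=(h(t),s(h(t)))\in G'$, so $(t,x(t))\in G$, i.e.\ $x(t)\in\varphi(t)$. Finally, $\tau(T\setminus h^{-1}(B))=\nu(B^c)=0$, so $x(t)\in\varphi(t)$ for $\tau$-almost every $t$.

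The main obstacle is Step 2. It is the genuinely descriptive-set-theoretic input: uniformizing a Borel set with analytic projection by a universally measurable function. It must then be combined with the passage from a $\nu$-measurable to a Borel selector, which only works up to a null set. This is precisely why the theorem yields the inclusion $x(t)\in\varphi(t)$ only almost everywhere, and why a measure, rather than a bare measurable space, is needed.
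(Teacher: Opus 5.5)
The paper does not prove this statement. It is quoted from Aumann and from Corollary~18.27 of Aliprantis--Border. There, as is standard, the non-complete version is deduced from the complete one: one passes to the completion of $\tau$, uses measurable projection/selection results for complete measure spaces to get a selector measurable for the completed $\sigma$-algebra, and then alters it on a $\tau$-null set.

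Your argument is correct and takes a different route. Instead of completing $(T,\mathcal{T},\tau)$, you factor the graph through the Cantor space via $h$. The only descriptive-set-theoretic input is then the von Neumann--Jankov uniformization on a Polish product. Universal measurability of the uniformizing map plays the role that completeness of $T$ plays in the textbook proof, and the null-set correction is done once, for the single Borel measure $\nu=h_*\tau$. This gives a self-contained reduction that never needs a projection theorem on an abstract measure space. It also makes clear why a measure is needed and why the conclusion holds only almost everywhere. The price is the easy countable-generation step.

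One point needs an explicit line. The Borel set $B\subseteq S$ with $\nu(B^c)=0$ does not follow from the $\nu$-measurability of $s$ alone. It follows because $S^c$ is universally measurable and every Borel $K\subseteq S^c$ has $h^{-1}(K)=\emptyset$, hence $\nu(K)=0$. So the completion of $\nu$ gives $S^c$ measure zero. Intersecting a Borel inner approximation of $S$ with the co-null Borel set on which $s=s'$ then gives $B$.

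Also, for your $\sigma$-finite remark, first replace $\tau$ by an equivalent finite measure, since $h_*\tau$ need not be $\sigma$-finite.
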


Lemma \ref{minMeas} commonly appears in the literature under the assumption that the measurable space is complete and $\sigma$-finite for some measure, with the (in that context equivalent) conclusion of measurability of $M$ (see e.g.\ Theorem 8.2.11 in \cite{AubinFrankowska2009}). However, we want to dispense of this completeness assumption here, as it would later (unnecessarily) require us to assume completeness of $(\Omega,\mathcal{F},\PP)$. In that way, we rederive that result here which requires some care on measurability and on the assumptions.

To prove Lemma \ref{minMeas}, we first need the following result on Carath\'eodory functions:
 
\begin{lemma}[Corollary 18.8 in \cite{AliprantisBorder2006}]\label{eqMeas}
Let $g:T\times X\to (-\infty,+\infty]$ be Carath\'eodory function. Then
\[
\varphi(t):=\{x\in X\mid g(t,x)=0\}
\]
is graph measurable.
\end{lemma}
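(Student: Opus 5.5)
The plan is to show that a Carath\'eodory function $g:T\times X\to(-\infty,+\infty]$ is jointly $\mathcal{T}\otimes\mathcal{B}(X)$-measurable. Here Carath\'eodory means that $g(\cdot,x)$ is $\mathcal{T}$-measurable for each $x$ and $g(t,\cdot)$ is continuous for each $t$, with continuity into the extended reals. Once joint measurability is established, the statement follows at once, since
\[
\mathrm{gra}(\varphi)=\{(t,x)\in T\times X\mid g(t,x)=0\}=g^{-1}(\{0\})\in\mathcal{T}\otimes\mathcal{B}(X).
\]

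To get joint measurability, I will approximate $g$ by functions that are piecewise constant in the $x$-variable along a countable Borel partition. Fix a countable dense set $\{d_k\}_{k\in\mathbb{N}}\subseteq X$, which exists since $X$ is separable. For $m\geq 1$ and $x\in X$, let $k_m(x)$ be the least $k$ with $d(x,d_k)<1/m$. Then set
\[
A_{m,k}:=\{x\in X\mid k_m(x)=k\}=B(d_k,1/m)\setminus\bigcup_{j<k}B(d_j,1/m).
\]
These sets are Borel and, for fixed $m$, partition $X$. Define $g_m(t,x):=g(t,d_{k_m(x)})$. For any Borel $B\subseteq(-\infty,+\infty]$ we have
\[
g_m^{-1}(B)=\bigcup_{k\in\mathbb{N}}\{t\in T\mid g(t,d_k)\in B\}\times A_{m,k}.
\]
This is a countable union of measurable rectangles, so each $g_m$ is $\mathcal{T}\otimes\mathcal{B}(X)$-measurable.

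Finally, $d(d_{k_m(x)},x)<1/m$, so $d_{k_m(x)}\to x$ as $m\to\infty$. By continuity of $g(t,\cdot)$, we get $g_m(t,x)\to g(t,x)$ for every $(t,x)$. A pointwise limit of measurable extended-real-valued functions is measurable, so $g$ is jointly measurable, which concludes the argument.

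The only point needing care, and the one I expect to be the main obstacle, is the value $+\infty$. I would handle it by viewing $(-\infty,+\infty]$ with its usual order topology, which is metrizable. In that topology continuity of $g(t,\cdot)$ still yields the pointwise convergence above, and measurability of pointwise limits still holds, for instance via $\limsup$ characterisations. Note that neither completeness of the space nor any measure on it is used, which is exactly the point emphasised in the surrounding text.
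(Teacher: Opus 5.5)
Your proof is correct and is essentially the standard argument behind the cited Corollary 18.8 of Aliprantis--Border, which the paper quotes without proof: a Carath\'eodory function is jointly $\mathcal{T}\otimes\mathcal{B}(X)$-measurable (the folklore fact the paper itself records as Lemma \ref{Cara}, and which you reprove directly via piecewise-constant approximations on countable Borel partitions), so $\mathrm{gra}(\varphi)=g^{-1}(\{0\})$ is measurable. Your handling of the value $+\infty$ through the metrizable order topology on $(-\infty,+\infty]$ is fine, and, as you observe, the argument uses neither completeness of $X$ or $T$ nor any measure.
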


We can now provide the proof of Lemma \ref{minMeas}:

\begin{proof}[Proof of Lemma \ref{minMeas}]
Using that $X$ is separable, fix a countable dense set $(z_n)$. Then 
\[
m(t)=\inf_{x\in X}g(t,x)=\inf_{n\in\mathbb{N}}g(t,z_n)
\]
by continuity of $g(t,\cdot)$. Thus, $m$ is measurable as every $g(\cdot,z_n)$ is measurable. As $g$ is a Carath\'eodory function, so is $g'(t,x)=g(t,x)-m(t)$. Using Lemma \ref{eqMeas}, we then get that
\[
M(t)=\{y\in X\mid g(t,y)= m(t)\}=\{y\in X\mid g'(t,x)=0\}
\]
is graph measurable.
\end{proof}

The key observation is then that $r(x_n(t),y)$ defines a Carath\'eodory function, leading to the following proof of our nonlinear variant of Pettis's theorem:

\begin{proof}[Proof of Proposition \ref{nonlinearPettis}]
Fix a set $T_0$ of measure zero such that $(x_n(t))$ is bounded and $x_n(t)\to^w x(t)$ for all $t\in T_0^c$. First note that $(t,y)\mapsto r(x_n(t),y)$ for $(t,y)\in T_0^c\times X$ is a Carath\'eodory function. Indeed, recall that by definition $r(x_n(t),y)=\limsup_{n\to\infty}d^2(x_n(t),y)$. First fix $t\in T_0^c$. As each $d^2(x_n(t),\cdot)$ is Lipschitz continuous with the same constant, it follows that $r(x_n(t),\cdot)$ is locally Lipschitz and hence continuous (see e.g.\ Example 2.2.8 in \cite{Bacak2014a}). Now fix $y\in X$. As each $d^2(x_n(\cdot),y)$ is measurable, we immediately get that $r(x_n(\cdot),y)$ is measurable.

Lemma \ref{minMeas} then yields that
\[
\varphi'(t):=\mathrm{argmin}_{y\in X}r(x_n(t),y),
\]
defined for $t\in T_0^c$, is graph measurable. As $(x_n(t))$ is bounded, we have by the existence of asymptotic centers that $\varphi'(t)$ is nonempty for all $t\in T_0^c$ (note that they are also closed by continuity of $r(x_n(t),\cdot)$). Now define $\varphi(t):=\varphi'(t)$ for $t\in T_0^c$ and $\varphi(t):=X$ otherwise. Then
\[
\mathrm{gra}(\varphi)=\mathrm{gra}(\varphi')\cup (T_0\times X)\in\mathcal{T}\otimes\mathcal{B}(X)
\]
so that $\varphi$ is still graph measurable, with $\varphi(t)$ nonempty (and closed) for all $t\in T$. Using Theorem \ref{measSelect}, there exists a measurable function $y$ such that $y(t)\in\varphi(t)$ almost everywhere, say on $T_1^c$ with $T_1$ of measure zero. For $t\in T_0^c\cap T_1^c$, by the uniqueness of asymptotic centers, we get that $x(t)=y(t)$. Thus $x=y$ almost everywhere

If $(T,\mathcal{T},\tau)$ is complete and $\sigma$-finite, then one can apply the Kuratowski–Ryll-Nardzewski selection theorem (see e.g.\ Theorem 8.1.3 in \cite{AubinFrankowska2009}) in place of Theorem \ref{measSelect}, which yields a measurable function $y$ with $x=y$ almost everywhere as before. As the space is now complete, we get that $x$ is measurable as well.
\end{proof}

\section{Stochastic quasi-Fej\'er monotonicity and weak (ergodic) convergence}

As outlined in the introduction, the main technical convergence result we present is a general proposition on the weak and strong convergence of stochastic quasi-Fej\'er monotone sequences in metric spaces.

For that, we first introduce some convenient notation: Given a filtration $\mathsf{F}=(\mathsf{F}_n)$ of $\mathcal{F}$, that is a sequence of sub-$\sigma$-algebras of $\mathcal{F}$ such that $\mathsf{F}_n\subseteq\mathsf{F}_m$ for $n\leq m$, we write $\ell_+(\mathsf{F})$ for the set of sequences of non-negative real-valued random variables $(e_n)$ that are adapted to the filtration, i.e.\ where $e_n$ is $\mathsf{F}_n$-measurable for all $n\in\mathbb{N}$. Further, we write $\ell^1_+(\mathsf{F})$ for the set of all $(e_n)\in \ell_+(\mathsf{F})$ such that $\sum_{n\in\mathbb{N}}e_n<+\infty$ a.s.

Now, fixing a solution set $Z\subseteq X$ and a filtration $\mathsf{F}=(\mathsf{F}_n)$, stochastic quasi-Fej\'er monotonicity for a sequence $(x_n)$ adapted to $\mathsf{F}$ in our context takes the form of requiring
\[
\EE[\phi(d(x_{n+1},z))\mid \mathsf{F}_n]\leq (1+\chi_n(z)) \phi(d(x_n,z))-\theta_n(z)+\eta_n(z)\text{ a.s.}
\]
for all $z\in Z$ and $n\in\mathbb{N}$, where $\phi$ is a suitable perturbation function and $(\chi_n(z)),(\eta_n(z))\in\ell^1_+(\mathsf{F})$ as well as $(\theta_n(z))\in\ell_+(\mathsf{F})$ are error sequences which might depend on the point $z$ in question.

As such, our notion is a direct lift of the rather general stochastic quasi-Fej\'er monotonicity considered in separable Hilbert spaces in the seminal work of Combettes and Pesquet \cite{CombettesPesquet2015} (see equation (2.5) therein and recall the further references given in the introduction), to the metric setting.

We can then obtain the following result on weak and strong convergence based on weak and strong cluster points, which is itself an extension of a corresponding result in separable Hilbert spaces given by Combettes and Pesquet \cite{CombettesPesquet2015} (see Proposition 2.3 therein). As mentioned in the introduction, as such our result is in particular an immediate synthesis of Proposition 2.3 from \cite{CombettesPesquet2015} (especially concerning items (1) -- (3) of the following Proposition \ref{weakConvergence}, which are highly derivative of it and essentially are a direct lift to the metric setting) and Proposition 3.3 from \cite{BacakSearstonSims2012} on weak convergence of Fej\'er monotone sequences in Hadamard spaces. Indeed, the key aspect of the convergence of Fej\'er monotone sequences in Hadamard spaces is the following result essentially derived (in the proof of) Proposition 3.3 in \cite{BacakSearstonSims2012}, and it immediately allows us to derive items (4) and (5) of the following Proposition \ref{weakConvergence}. For self-containedness, we rederive it here, with a slightly different argument.

\begin{lemma}[essentially Proposition 3.3 in \cite{BacakSearstonSims2012}]\label{nonlinearFejer}
Let $X$ be a Hadamard space and let $Z\subseteq X$ be a non-empty closed subset of $X$ and $(x_n)\subseteq X$ be a given sequence such that $d(x_n,z)$ converges for all $z\in Z$.
\begin{enumerate}
\item If $\mathfrak{W}(x_n)\subseteq Z$, then $(x_n)$ weakly converges a.s.\ to some point in $Z$.
\item If $\mathfrak{S}(x_n)\cap Z\neq\emptyset$, then $(x_n)$ strongly converges to some point in $Z$.
\end{enumerate}
\end{lemma}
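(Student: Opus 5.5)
Both items rest on the fact that $d(x_n,z)$ converges for every $z\in Z$. In particular $(x_n)$ is bounded: $Z$ is nonempty, and convergence of $d(x_n,z_0)$ for one $z_0\in Z$ already gives boundedness. Hence asymptotic radii and centers are defined, and weak cluster points exist. The plan is to handle (2) first, since it is elementary, and then (1). The ``a.s.'' in (1) is purely deterministic here and can be ignored.

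For (2), pick $z\in\mathfrak{S}(x_n)\cap Z$, so that $x_{n_k}\to z$ along some subsequence. Then $\lim_n d(x_n,z)$ exists and its subsequential value along $(n_k)$ is $0$. Thus $d(x_n,z)\to 0$, that is, $x_n\to z\in Z$.

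For (1), use the characterisation of weak convergence via asymptotic centers. Since $(x_n)$ is bounded, $\mathfrak{W}(x_n)\neq\emptyset$, so pick $x\in\mathfrak{W}(x_n)\subseteq Z$ with $x_{n_k}\to^w x$. We must show that $x$ is the asymptotic center of every subsequence $(x_{m_j})$. Take such a subsequence and let $y$ be its asymptotic center. By boundedness, $(x_{m_j})$ has a further subsequence $(x_{m_{j_l}})$ converging weakly to some $y'\in Z$.

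The key observation is that for $z\in Z$ the quantity $r(\cdot,z)$ does not depend on the subsequence, because $\lim_n d^2(x_n,z)$ exists. Since $x,y'\in Z$, we get
\[
r(x_{n_k},x)=\lim_n d^2(x_n,x)\quad\text{and}\quad r(x_{n_k},y')=\lim_n d^2(x_n,y')=r(x_{m_{j_l}},y').
\]
As $x$ is the unique asymptotic center of $(x_{n_k})$ and $y'$ is the unique asymptotic center of $(x_{m_{j_l}})$, this yields the two inequalities
\[
\lim_n d^2(x_n,x)\le\lim_n d^2(x_n,y'),
\]
with strict inequality unless $x=y'$, and
\[
\lim_n d^2(x_n,y')\le\lim_n d^2(x_n,x),
\]
again strict unless $x=y'$. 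Together these force $x=y'$.

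It remains to identify $y$ with $x$. Here I would use that weak limits of the whole sequence are already unique: the argument above shows that every weak cluster point of $(x_n)$ equals $x$. I would then apply this to the subsequence $(x_{m_j})$. It satisfies the same hypotheses, since $d(x_{m_j},z)$ converges and its weak cluster points lie in $\mathfrak{W}(x_n)\subseteq Z$. The same comparison then shows that its own asymptotic center coincides with its unique weak cluster point. Concretely, let $y$ be the center of $(x_{m_j})$ and suppose $y\neq x$. Since $x\in Z$ and $(x_{m_{j_l}})$ is a subsequence of $(x_{m_j})$,
\[
r(x_{m_j},x)=\lim_n d^2(x_n,x)=r(x_{m_{j_l}},x).
\]
Moreover, because $x$ is the center of $(x_{m_{j_l}})$, we have $r(x_{m_{j_l}},x)\le r(x_{m_{j_l}},y)$, and a limsup along a subsequence is at most the limsup along the full sequence, so $r(x_{m_{j_l}},y)\le r(x_{m_j},y)$. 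Chaining these gives
\[
r(x_{m_j},x)\le r(x_{m_j},y),
\]
so $x$ also minimises $r(x_{m_j},\cdot)$. Uniqueness of asymptotic centers then gives $y=x$, which completes (1).

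The main obstacle is this last bookkeeping step: passing from ``the center of some sub-subsequence is $x$'' to ``the center of every subsequence is $x$''. It works precisely because $r(\cdot,z)$ is subsequence-independent for points $z\in Z$, and because limsups only decrease when passing to subsequences.
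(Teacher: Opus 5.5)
Your proof is correct and follows essentially the paper's argument. Item (2) is identical. For item (1) you use the same comparison of asymptotic radii: $\lim_n d^2(x_n,z)$ is independent of the subsequence for $z\in Z$, and together with uniqueness of asymptotic centers this forces any two weak cluster points to coincide. Your final paragraph additionally checks that $x$ is the asymptotic center of every subsequence, which the paper takes for granted when it says it suffices to show uniqueness of the weak cluster point.
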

\begin{proof}
Note that $(x_n)$ is bounded and therefore has a weak cluster point. For the first item, it hence suffices to show that $(x_n)$ has a unique weak cluster point. To that effect, let $(x_{n_k})$ and $(x_{m_k})$ be subsequences of $(x_n)$ with asymptotic centers $c_1$ and $c_2$, respectively. As $\mathfrak{W}(x_n)\subseteq Z$, we have $c_1,c_2\in Z$. Therefore $d(x_n,c_1)$ and $d(x_n,c_2)$ and so also $d^2(x_n,c_1)$ and $d^2(x_n,c_2)$ converge. Now, assume w.l.o.g.\ that $r(x_{n_k})\leq r(x_{m_k})$. We then have
\begin{align*}
r(x_{m_k})\geq r(x_{n_k})&=r(x_{n_k},c_1)\\
&=\limsup_{k\to\infty} d^2(x_{n_k},c_1)\\
&=\lim_{n\to\infty} d^2(x_n,c_1)\\
&=\limsup_{k\to\infty} d^2(x_{m_k},c_1)\\
&=r(x_{m_k},c_1)\geq r(x_{m_k}),
\end{align*}
where the third (and fourth) line follows from the fact that $d^2(x_n,c_1)$ (and so every subsequence of it) converges. Thus in particular $r(x_{m_k})=r(x_{m_k},c_1)$ and so $c_1$ is also an asymptotic center of $(x_{m_k})$, next to $c_2$. As asymptotic centers are unique, we get $c_1=c_2$ which completes the argument. 

For the strong convergence result, assume $\mathfrak{S}(x_n)\cap Z\neq\emptyset$. Concretely, let $x\in Z$ be a strong accumulation point of $(x_n)$, i.e.\
\[
\liminf_{n\to\infty}d(x_n,x)=0.
\]
As $d(x_n,x)$ converges, we get $\lim_{n\to\infty}d(x_n,x)=0$, which completes the proof.
\end{proof}

Like in \cite{CombettesPesquet2015}, and any other result on stochastic quasi-Fej\'er monotonicity for that matter, the key statistical ingredient of our convergence result is the seminal Robbins-Siegmund theorem on almost-supermartingale convergence:

\begin{lemma}[Theorem 1 in \cite{RobbinsSiegmund1971}]\label{RS}
Let $\mathsf{F}=(\mathsf{F}_n)$ be a filtration, and let $(\alpha_n),(\theta_n)\in\ell_+(\mathsf{F})$ as well as $(\eta_n),(\chi_n)\in\ell^1_+(\mathsf{F})$ be such that
\[
\EE[\alpha_{n+1}\mid \mathsf{F}_n]\leq (1+\chi_n)\alpha_n-\theta_n+\eta_n\text{ a.s.}
\] 
for any $n\in\mathbb{N}$. Then $(\alpha_n)$ a.s.\ converges to a nonnegative real-valued random variable and $(\theta_n)\in\ell^1_+(\mathsf{F})$.
\end{lemma}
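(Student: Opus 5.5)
The plan is to reduce the inequality to a genuine nonnegative supermartingale by normalising away the multiplicative perturbations $(\chi_n)$, absorbing the additive errors $(\eta_n)$ into a compensated process, and localising with stopping times. Localisation is needed because $(\chi_n),(\eta_n)$ are only a.s.\ summable, not integrable.

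First I normalise. Set $\beta_0:=1$ and $\beta_{n+1}:=\prod_{k=0}^n(1+\chi_k)$, so that $\beta_{n+1}$ is $\mathsf{F}_n$-measurable and $\beta_n\geq 1$. Since $\sum_n\chi_n<+\infty$ a.s., $\beta_n\to\beta_\infty\in[1,\infty)$ a.s. Define
\[
\alpha'_n:=\alpha_n/\beta_n,\qquad \theta'_n:=\theta_n/\beta_{n+1},\qquad \eta'_n:=\eta_n/\beta_{n+1}.
\]
Dividing the hypothesis by the $\mathsf{F}_n$-measurable quantity $\beta_{n+1}$ (which may be pulled inside the conditional expectation of a nonnegative variable) gives
\[
\EE[\alpha'_{n+1}\mid\mathsf{F}_n]\leq\alpha'_n-\theta'_n+\eta'_n.
\]
Moreover $\sum_n\eta'_n\leq\sum_n\eta_n<+\infty$ a.s.

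Next I set $U_n:=\alpha'_n+\sum_{k<n}\theta'_k-\sum_{k<n}\eta'_k$, which satisfies $\EE[U_{n+1}\mid\mathsf{F}_n]\leq U_n$ formally. For $a>0$, the partial sums $\sum_{k\leq n}\eta'_k$ are $\mathsf{F}_n$-measurable, so $\tau_a:=\inf\{n\mid \sum_{k\leq n}\eta'_k>a\}$ is a stopping time. For $n\leq \tau_a$ we have $\sum_{k<n}\eta'_k\leq a$, hence $U_{n\wedge\tau_a}+a\geq 0$. This stopped process is therefore a nonnegative (generalised) supermartingale.

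To avoid any integrability assumption on $\alpha_0$, I additionally restrict to the $\mathsf{F}_0$-measurable events $\{\alpha_0\leq c\}$, multiplying by their indicators. On these events the stopped process is integrable. By the supermartingale convergence theorem it converges a.s. to a finite limit. On $\{\tau_a=\infty\}\cap\{\alpha_0\leq c\}$ this means $U_n$ itself converges. Taking the union over $a,c\in\mathbb{N}$ covers $\{\sum\eta'_n<\infty\}$, which has full measure, so $U_n$ converges a.s.

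Finally I unpack the limit. Since $\sum_{k<n}\eta'_k$ converges and $\alpha'_n\geq0$, the nondecreasing sums $\sum_{k<n}\theta'_k$ are bounded above by $\sup_n U_n+\sum_k\eta'_k<\infty$, so they converge. Hence $\alpha'_n$ converges a.s. as well. Multiplying back by $\beta_n\to\beta_\infty<\infty$ gives that $\alpha_n$ converges a.s. to a finite nonnegative limit. It also gives $\sum_n\theta_n\leq\beta_\infty\sum_n\theta'_n<\infty$ a.s., i.e.\ $(\theta_n)\in\ell^1_+(\mathsf{F})$.

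The main obstacle is the measure-theoretic care in the localisation step. One must justify supermartingale convergence when the variables are only nonnegative and possibly non-integrable. This is handled by the $\mathsf{F}_0$-truncation on $\alpha_0$ together with the stopping times $\tau_a$, checking that the stopped process inherits the supermartingale inequality.
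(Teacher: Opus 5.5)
Your proof is correct and is essentially the classical Robbins--Siegmund argument: normalise by $\prod_{k<n}(1+\chi_k)$, pass to the compensated process $U_n$, localise with the stopping times $\tau_a$ so that $U_{n\wedge\tau_a}+a\geq 0$, and apply nonnegative supermartingale convergence; your truncation on the $\mathsf{F}_0$-measurable events $\{\alpha_0\leq c\}$ is a harmless extra step that secures integrability. The paper gives no proof of its own here and simply imports the result as Theorem 1 of Robbins and Siegmund, whose argument is the one you reconstructed.
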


\begin{proposition}[extending Proposition 2.3 in \cite{CombettesPesquet2015} and Proposition 3.3 in \cite{BacakSearstonSims2012}]\label{weakConvergence}
Let $X$ be a separable Hadamard space and let $Z\subseteq X$ be a nonempty closed subset of $X$ and let $\phi:[0,+\infty)\to [0,+\infty)$ be strictly increasing such that $\lim_{t\to +\infty}\phi(t)=+\infty$. Let $\mathsf{F}=(\mathsf{F}_n)$ be a filtration and let $(x_n)$ be a sequence of $X$-valued random variables adapted to $\mathsf{F}$ such that it is stochastically quasi-Fej\'er monotone w.r.t.\ $Z$, that is for any $z\in Z$ there are $(\chi_n(z)),(\eta_n(z))\in\ell^1_+(\mathsf{F})$ and $(\theta_n(z))\in\ell_+(\mathsf{F})$ such that for all $n\in\mathbb{N}$:
\[
\EE[\phi(d(x_{n+1},z))\mid \mathsf{F}_n]\leq (1+\chi_n(z)) \phi(d(x_n,z))-\theta_n(z)+\eta_n(z)\text{ a.s.}\tag{$*$}\label{randFejer}
\]
Then we have the following assertions:
\begin{enumerate}
\item $\sum_{n\in\mathbb{N}}\theta_n(z)<+\infty$ a.s.\ for all $z\in Z$.
\item $(x_n)$ is bounded a.s.
\item There exists a set $\widetilde{\Omega}$ with $\PP(\widetilde{\Omega})=1$ such that for all $\omega\in\widetilde{\Omega}$ and $z\in Z$, the sequence given by $d(x_n(\omega),z)$ converges.
\item If $\mathfrak{W}(x_n)\subseteq Z$ a.s., then $(x_n)$ weakly converges a.s.\ to a $Z$-valued random variable.
\item If $\mathfrak{S}(x_n)\cap Z\neq\emptyset$ a.s., then $(x_n)$ strongly converges a.s.\ to a $Z$-valued random variable.
\end{enumerate}
\end{proposition}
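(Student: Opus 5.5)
The plan follows Combettes--Pesquet closely. We handle items (1) to (3) via Lemma \ref{RS}, then feed the pathwise conclusion of (3) into Lemma \ref{nonlinearFejer} for (4) and (5). Measurability of the limit is obtained from Proposition \ref{nonlinearPettis} (weak case) or directly as a pointwise limit (strong case).

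For (1), fix $z\in Z$ and apply Lemma \ref{RS} with $\alpha_n:=\phi(d(x_n,z))$. This is $\mathsf{F}_n$-measurable because $x_n$ is and $\phi$ is monotone, hence Borel. We get that $\phi(d(x_n,z))$ converges a.s.\ to a finite limit and $\sum_n\theta_n(z)<\infty$ a.s. For (2), pick any $z\in Z$. Since $\phi(d(x_n,z))$ is a.s.\ bounded and $\phi(t)\to+\infty$, $d(x_n,z)$ is a.s.\ bounded.

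For (3), which I expect to be the main obstacle, the null set must not depend on $z$, and $Z$ may be uncountable. I would use separability: fix a countable dense subset $\{z_k\}$ of $Z$ ($Z$ is a subset of a separable metric space, hence separable). Take $\widetilde\Omega$ to be the intersection of the full-measure sets on which $\phi(d(x_n,z_k))$ converges for every $k$. Since $\phi$ is strictly increasing, $\lim_n d(x_n,z_k)$ exists in $[0,\infty)$: a monotone bijection onto its range preserves convergence. The one point needing care is possible discontinuities of $\phi$. I would argue that $\limsup d$ and $\liminf d$ must coincide, since otherwise $\phi$ evaluated along two subsequences would give values separated by at least $\phi(a')-\phi(a)>0$ for some $a<a'$ strictly between them, contradicting convergence. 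Then for arbitrary $z\in Z$ and $\omega\in\widetilde\Omega$, use $|d(x_n,z)-d(x_n,z_k)|\le d(z,z_k)$. This gives $\limsup_n d(x_n,z)-\liminf_n d(x_n,z)\le 2d(z,z_k)$, which can be made arbitrarily small, so $d(x_n(\omega),z)$ converges.

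For (4), intersect $\widetilde\Omega$ with the full-measure set where $\mathfrak{W}(x_n)\subseteq Z$. Pathwise, Lemma \ref{nonlinearFejer}(1) gives a weak limit $x(\omega)\in Z$; on the null complement set $x(\omega):=z_0$ for a fixed $z_0\in Z$. To see that $x$ is a random variable, apply Proposition \ref{nonlinearPettis} to the sequence $(x_n)$, which is a.s.\ bounded by (2) and a.s.\ weakly convergent to $x$. This yields a measurable $y$ with $y=x$ a.s. Redefining $y:=z_0$ on the measurable null set where $y\notin Z$ (measurable since $Z$ is closed) gives a $Z$-valued random variable that is the a.s.\ weak limit. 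For (5), proceed the same way with Lemma \ref{nonlinearFejer}(2). Here the limit is an a.s.\ pointwise strong limit of measurable maps into a metric space, so setting $x:=\lim x_n$ on the full-measure set where the limit exists and $z_0$ elsewhere gives measurability directly, with values in $Z$ by closedness.
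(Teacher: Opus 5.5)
Your proposal is correct and follows the paper's proof essentially step by step: Robbins--Siegmund (Lemma \ref{RS}) for (1)--(2); strict monotonicity of $\phi$ together with a countable dense subset of $Z$ and the triangle inequality for (3); and Lemma \ref{nonlinearFejer} combined with Proposition \ref{nonlinearPettis} (resp.\ measurability of pointwise strong limits in a separable space) for (4) and (5). Your $\liminf/\limsup$ argument handling jumps of $\phi$ is exactly the paper's contradiction argument, and it is the one to rely on, since the limit of $\phi(d(x_n,z))$ need not lie in the range of $\phi$, so the ``monotone bijection onto its range'' remark alone would not suffice.
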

\begin{proof}
Ad (1): For any $z\in Z$, the above inequality \eqref{randFejer} and Lemma \ref{RS} immediately yield $\sum_{n\in\mathbb{N}}\theta_n(z)<+\infty$ a.s.\\

Ad (2): Let $z\in Z$ be arbitrary. The above inequality \eqref{randFejer} and Lemma \ref{RS} immediately yield that $\phi(d(x_n,z))$ converges a.s.\ to some nonnegative real-valued random variable $\alpha_z$. As $\lim_{t\to +\infty}\phi(t)=+\infty$, we get that the sequence $d(x_n,z)$ is bounded a.s., which is hence true also for $(x_n)$.\\

Ad (3): Let us first note that given any $z\in Z$,  $d(x_n,z)$ converges a.s., namely at every point where the sequence $d(x_n,z)$ is bounded. For suppose not, then there is a point $\omega\in\Omega$ such that the sequence $d(x_n(\omega),z)$ is bounded and where there are subsequences $d(x_{n_k}(\omega),z)$ and $d(x_{m_k}(\omega),z)$ with respective distinct limits $\tau_1(\omega)$ and $\tau_2(\omega)$. W.l.o.g. suppose $\tau_2(\omega)>\tau_1(\omega)$ and take $\varepsilon(\omega)\in (0,(\tau_2(\omega)-\tau_1(\omega))/2)$. For large enough $n$, we will have
\[
d(x_{n_k}(\omega),z)\leq \tau_1(\omega)+\varepsilon(\omega)<\tau_2(\omega)-\varepsilon(\omega)\leq d(x_{m_k}(\omega),z).
\]
As $\phi$ is strictly increasing, we immediately get 
\[
\phi(d(x_{n_k}(\omega),z))\leq \phi(\tau_1(\omega)+\varepsilon(\omega))<\phi(\tau_2(\omega)-\varepsilon(\omega))\leq \phi(d(x_{m_k}(\omega),z)).
\]
Taking the limit as $n\to \infty$ gives
\[
\alpha_z(\omega)\leq \phi(\tau_1(\omega)+\varepsilon(\omega))<\phi(\tau_2(\omega)-\varepsilon(\omega))\leq \alpha_z(\omega)
\]
for the random variable $\alpha_z$ from item (2), which is a contradiction. 

Now, we move on to actually showing item (3). As $X$ is separable, also $Z$ is separable. Fix a countable dense set $Z_0$ in $Z$. By the previous, for every $z\in Z_0$ there exists a set $\Omega_z$ of measure one such that $d(x_n(\omega),z)$ converges for any $\omega\in \Omega_z$. Define $\widetilde{\Omega}:=\bigcap_{z\in Z_0}\Omega_z$. Clearly $\widetilde{\Omega}$ has measure one. Let now $z\in Z$ and $\omega\in \widetilde{\Omega}$ be arbitrary. As $Z_0$ is dense in $Z$, there exists a sequence $(z_k)$ in $Z_0$ such that $z_k\to z$. As $\omega\in \widetilde{\Omega}\subseteq\Omega_{z_k}$, we in particular have that $d(x_n(\omega),z_k)$ converges for any $k\in\mathbb{N}$ and for any $\omega\in \widetilde{\Omega}$, say with limit $\tau_k(\omega)$. For any $k\in\mathbb{N}$ and any such $\omega\in \widetilde{\Omega}$, we thus have 
\[
-d(z_k,z)\leq d(x_n(\omega),z)-d(x_n(\omega),z_k)\leq d(z_k,z)
\]
using triangle inequality, so that we get
\begin{align*}
-d(z_k,z)&\leq \liminf_{n\to\infty}d(x_n(\omega),z)-\lim_{n\to\infty}d(x_n(\omega),z_k)\\
&=\liminf_{n\to\infty}d(x_n(\omega),z)-\tau_k(\omega)\\
&\leq \limsup_{n\to\infty}d(x_n(\omega),z)-\tau_k(\omega)\\
&= \limsup_{n\to\infty}d(x_n(\omega),z)-\lim_{n\to\infty}d(x_n(\omega),z_k)\\
&\leq d(z_k,z).
\end{align*}
Taking the limit as $k\to\infty$ yields that $d(x_n(\omega),z)$ converges with
\[
\lim_{n\to\infty}d(x_n(\omega),z)=\lim_{k\to\infty}\tau_k(\omega).
\]

Ad (4): Let $\widetilde{\Omega}$ be the set with measure one from item (3) and suppose that $\mathfrak{W}(x_n)\subseteq Z$ a.s., say $\mathfrak{W}(x_n(\omega))\subseteq Z$ for all $\omega\in\widehat{\Omega}$ for $\widehat{\Omega}$ with measure one. Lemma \ref{nonlinearFejer} now yields that for any $\omega\in \widehat{\Omega}\cap \widetilde{\Omega}$, the sequence $(x_n(\omega))$ converges weakly to an element $x(\omega)$ of $Z$. This then yields the claim as follows: $\widehat{\Omega}\cap \widetilde{\Omega}$ still has measure one. By Proposition \ref{nonlinearPettis}, we get that there is a measurable $y$ with $x=y$ a.s. Thus $y$ is a.s.\ $Z$-valued and so w.l.o.g.\ we can assume that $y$ is $Z$-valued. Clearly, $(x_n)$ converges weakly to $y$ almost surely.\\

Ad (5): In a similar way as item (4) above, using in particular item (3), Lemma \ref{nonlinearFejer} now yields that $x_n(\omega)$ strongly converges to some point $x(\omega)\in Z$ on a set of measure one. The measurability of $x$ follows immediately as $X$ is separable.
\end{proof}

Under suitable restrictions on the space, we can now also provide a stochastic weak ergodic convergence result. We establish this result not for all separable Hadamard spaces, but only for those that satisfy the condition $(\overline{Q}_4)$, i.e.\ for points $x,y,p,q\in X$ and any point $m\in [x,y]$:
\[
d(p,x)\leq d(x,q)\text{ and }d(p,y)\leq d(y,q)\text{ implies }d(p,m)\leq d(m,q).
\]
Equivalently, and more crucially for the present paper, the condition expresses that
\[
F(p,q):=\{x\in X\mid d(p,x)\leq d(x,q)\}\text{ is convex}\tag{$\overline{Q}_4$}
\]
for all $p,q\in X$. This condition was introduced by Kakavandi in \cite{Kakavandi2013} for studying topologies capturing weak convergence (which was recently, and by quite a different method, substantially extended by Lytchak and Petrunin \cite{LytchakPetrunin2023}) and is a slight strengthening of the condition $(Q_4)$ introduced by Kirk and Panyanak in \cite{KirkPanyanak2008} for establishing convergence of midpoint sequences in geodesic segments (which was subsequently weakened in \cite{EspinolaFernandezLeon2009}). Examples of spaces that satisfy $(\overline{Q}_4)$ include Hilbert spaces, $\mathbb{R}$-trees, Hadamard spaces with constant curvature and their closed convex
subsets (see \cite{Kakavandi2013} and see also \cite{EspinolaFernandezLeon2009} for related results for the condition $(Q_4)$). However, not all Hadamard spaces even satisfy the condition $(Q_4)$ as shown in \cite{EspinolaFernandezLeon2009}. In particular, any $\mathrm{CAT}(0)$ gluing space containing two spaces with constant but different curvatures does not.

In the recent paper \cite{KhatibzadehMoosavi2023}, this condition was used to establish the weak ergodic convergence of a nonlinear variant, first studied by Ba\v{c}\'ak \cite{Bacak2014b}, of the proximal splitting method originally considered by Passty \cite{Passty1979}.

While presented as tailored to that particular iteration, the approach given in \cite{KhatibzadehMoosavi2023} essentially establishes the following general convergence principle, which can be seen a nonlinear variant of Passty's ergodic convergence condition given as Lemma 1 in \cite{Passty1979}. However, note that compared to \cite{Passty1979}, the present result requires the asymptotic regularity condition $d(x_n,x_{n+1})\to 0$ on the sequence $(x_n)$ in addition to the usual condition that all weak cluster points of the weighted averages are solutions. Even though this requirement is slightly cumbersome, especially in a stochastic context, both properties can often be established simultaneously, as will also be the case in this paper. 

As the result only appears implicitly in \cite{KhatibzadehMoosavi2023}, we reprove it here for the readers convenience:

\begin{lemma}[essentially the proof of Theorem 4.2 in \cite{KhatibzadehMoosavi2023}]\label{nonlinearPassty}
Let $X$ be a Hadamard space satisfying $(\overline{Q}_4)$ and let $Z\subseteq X$ be a non-empty closed convex subset of $X$. Let $(x_n)\subseteq X$ be a given sequence such that $d(x_n,z)$ converges for all $z\in Z$. Further, let $(t_n)\subseteq (0,+\infty)$ be such that $\sum_{n\in\mathbb{N}}t_n=+\infty$ and let $(\overline{x}_n)$ be the associated sequence of weighted averages. If $\mathfrak{W}(\overline{x}_n)\subseteq Z$ and $d(x_n,x_{n+1})\to 0$, then $(\overline{x}_n)$ weakly converges a.s.\ to some point in $Z$.
\end{lemma}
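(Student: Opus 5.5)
Since $d(x_n,z)$ converges for some $z\in Z$, $(x_n)$ is bounded. The weighted averages $\overline{x}_n$ lie in closed balls around $z$ (balls are convex in a $\CAT$ space), so $(\overline{x}_n)$ is bounded too. As in Lemma \ref{nonlinearFejer}, it then suffices to show that $(\overline{x}_n)$ has at most one weak cluster point, because bounded sequences always have one. So let $p,q\in\mathfrak{W}(\overline{x}_n)\subseteq Z$ be weak limits of subsequences $(\overline{x}_{n_k})$ and $(\overline{x}_{m_k})$. Assume $p\neq q$; we aim for a contradiction. Write $\ell_p:=\lim_n d(x_n,p)$ and $\ell_q:=\lim_n d(x_n,q)$, which exist because $p,q\in Z$.

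First case: $\ell_p<\ell_q$. Then for all large $n$ we have $d(x_n,p)<d(x_n,q)$, i.e.\ $x_n\in F(p,q)$. By $(\overline{Q}_4)$ the set $F(p,q)$ is convex, and it is closed by continuity of the metric. The recursion shows that $\overline{x}_{n+1}$ lies on the geodesic $[\overline{x}_n,x_{n+1}]$. I claim that $\operatorname{dist}(\overline{x}_n,F(p,q))\to 0$. If $N$ is such that $x_n\in F(p,q)$ for $n\geq N$, then joint convexity of the metric gives
\[
d\bigl(\overline{x}_{n+1},P\overline{x}_{n+1}\bigr)\leq\frac{\overline{t}_n}{\overline{t}_{n+1}}\,d\bigl(\overline{x}_n,P\overline{x}_n\bigr),
\]
where $P$ is the metric projection onto $F(p,q)$. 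Compare the geodesics $[\overline{x}_n,x_{n+1}]$ and $[P\overline{x}_n,x_{n+1}]$ and use that the second one lies in $F(p,q)$. The product of the factors $\overline{t}_n/\overline{t}_{n+1}$ telescopes to $\overline{t}_N/\overline{t}_n\to 0$, because $\sum t_n=+\infty$. Closed convex sets are weakly closed in Hadamard spaces. Moreover, $P\overline{x}_{m_k}$ has the same asymptotic behaviour as $\overline{x}_{m_k}$, so it weakly converges to $q$. Hence $q\in F(p,q)$, that is $d(p,q)\leq d(q,q)=0$, which is a contradiction. The case $\ell_q<\ell_p$ is symmetric.

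Second case: $\ell_p=\ell_q$. Here the sign of $d(x_n,p)-d(x_n,q)$ may oscillate, and this is where the hypothesis $d(x_n,x_{n+1})\to0$ is needed; I expect this to be the main obstacle. My plan is to replace $p,q$ by nearby points on the geodesic $[p,q]\subseteq Z$ (here convexity of $Z$ is used). Let $m$ be the midpoint of $[p,q]$. The CN inequality gives
\[
d^2(x_n,m)\leq\tfrac12 d^2(x_n,p)+\tfrac12 d^2(x_n,q)-\tfrac14 d^2(p,q),
\]
so $\lim_n d(x_n,m)<\ell_p=\ell_q$. Now compare $p$ with $m$ instead of $q$: for large $n$ we have $x_n\in F(m,p)$, and the argument of the first case shows that every weak cluster point of $(\overline{x}_n)$ lies in $F(m,p)$. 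Applied to $p$ itself this gives $d(m,p)\leq 0$, a contradiction. This seems to already handle the second case without using asymptotic regularity. I would double-check whether the argument of the first case really transfers when $p$ is merely a cluster point, i.e.\ whether weak closedness of $F(m,p)$ is enough. If so, the hypothesis $d(x_n,x_{n+1})\to0$ is needed only in a more careful treatment of boundary behaviour, namely when the limits coincide for every pair. In that case I would use $d(x_n,x_{n+1})\to0$ together with the slowly varying weights to control the averages over the finitely many stretches where $x_n$ crosses between $F(p,q)$ and $F(q,p)$.

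Finally, since the weak cluster point is unique and lies in $Z$, every subsequence of $(\overline{x}_n)$ has the same asymptotic center. Therefore $(\overline{x}_n)$ weakly converges to a point of $Z$.
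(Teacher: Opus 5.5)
Your proof is correct. Your first case takes a genuinely different and simpler route than the paper; your second case is essentially the paper's.

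\textbf{First case.} The paper compares $\overline{x}_m$ with the average $\overline{x}^{n_0}_m$ of the shifted sequence $(x_{n+n_0})$, formed with the unshifted weights $(t_n)$. This point lies in $F(c_1,c_2)$, and an induction via joint convexity bounds $d(\overline{x}^i_m,\overline{x}^{i+1}_m)$ by $\sum_{k=0}^m\frac{t_k}{\overline{t}_m}d(x_{i+k},x_{i+k+1})$. That bound tends to $0$ only because $d(x_n,x_{n+1})\to 0$. Your contraction $\operatorname{dist}(\overline{x}_{n+1},K)\leq\frac{\overline{t}_n}{\overline{t}_{n+1}}\operatorname{dist}(\overline{x}_n,K)$ whenever $x_{n+1}\in K$ is valid; you get it by comparing $[\overline{x}_n,x_{n+1}]$ with $[P_K\overline{x}_n,x_{n+1}]$. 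It uses only joint convexity of the metric, convexity of $K$ and $\sum_n t_n=+\infty$.

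\textbf{Second case.} The paper uses $\gamma'(\lambda)$ and lets $\lambda\to 0$. A single $\lambda$, such as your midpoint, already suffices, since $d(c_2,\gamma'(\lambda))=(1-\lambda)d(c_1,c_2)$.

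\textbf{Your hedge.} The transfer works verbatim: in the first case $q$ was also merely a weak cluster point. The argument only uses that $(x_n)$ eventually lies in a closed convex set $K$, from which every weak cluster point of $(\overline{x}_n)$ lies in $K$. So your proof is complete as written and never uses $d(x_n,x_{n+1})\to 0$. The ``boundary'' scenario you worry about cannot occur. For $p\neq q$ the midpoint gives $\lim_n d^2(x_n,m)<\max(\ell_p^2,\ell_q^2)$, so comparing $m$ with whichever of $p,q$ has the larger limit handles everything at once and makes the case split unnecessary. You should delete the final speculative plan about crossing stretches.

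\textbf{What each route buys.} Your route proves a stronger lemma: a genuine nonlinear analogue of Passty's lemma without asymptotic regularity. Consequently the hypothesis $d(x_n,x_{n+1})\to 0$ a.s.\ could be dropped from Proposition \ref{weakErgodicConvergence}, together with the corresponding step in the proof of Theorem \ref{SBweakErgodicConv}. The paper's route, by contrast, is a direct transcription of the argument of Khatibzadeh and Moosavi.
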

\begin{proof}
The sequence $(x_n)$ and hence also $(\overline{x}_n)$ is bounded and therefore has a weak cluster point. It thus suffices to show that $(\overline{x}_n)$ has no further weak cluster point. Suppose that $c_1,c_2$ are two weak cluster points and let $(\overline{x}_{n_k})$ and $(\overline{x}_{m_k})$ be subsequences of $(\overline{x}_n)$ with asymptotic centers $c_1$ and $c_2$, respectively. As $\mathfrak{W}(\overline{x}_n)\subseteq Z$, we have $c_1,c_2\in Z$. Therefore $d(x_n,c_1)$ and $d(x_n,c_2)$ and so also $d^2(x_n,c_1)$ and $d^2(x_n,c_2)$ converge. Assume w.l.o.g.\ that $\lim_{n\to\infty}d^2(x_n,c_1)\leq \lim_{n\to\infty}d^2(x_n,c_2)$. We now distinguish two cases, namely $\lim_{n\to\infty}d^2(x_n,c_1)< \lim_{n\to\infty}d^2(x_n,c_2)$ and $\lim_{n\to\infty}d^2(x_n,c_1)= \lim_{n\to\infty}d^2(x_n,c_2)$.

We begin with the former. There hence exists an $n_0\geq 1$ with $d^2(x_n,c_1)< d^2(x_n,c_2)$ and so in particular $x_n\in F(c_1,c_2)$ for all $n\geq n_0$. Given $i\in\mathbb{N}$, denote the weighted average of $(x_{n+i})$ with $(t_n)$ by $\overline{x}^{i}_m$. Convexity of $F(c_1,c_2)$, that is $(\overline{Q}_4)$, yields that $\overline{x}^{n_0}_m\in F(c_1,c_2)$. We now show that $d(\overline{x}_m,\overline{x}^{n_0}_m)\to 0$. By triangle inequality, we have $d(\overline{x}_m,\overline{x}^{n_0}_m)\leq \sum_{i=0}^{n_0-1}d(\overline{x}^i_m,\overline{x}^{i+1}_m)$ it is enough to show that $d(\overline{x}^i_m,\overline{x}^{i+1}_m)\to 0$ for all $i\in \mathbb{N}$. For that, we show
\[
d(\overline{x}^i_{m},\overline{x}^{i+1}_{m})\leq \sum_{k=0}^{m}\frac{t_{k}}{\overline{t}_m}d(x_{i+k},x_{i+k+1}),
\]
which in turn follows by induction on $m$. Concretely, we have $d(\overline{x}^i_0,\overline{x}^{i+1}_0)=d(x_{i},x_{i+1})$ and, using joint convexity of the metric in Hadamard spaces, further
\begin{align*}
d(\overline{x}^i_{m+1},\overline{x}^{i+1}_{m+1})&=d\left(\frac{\overline{t}_m}{\overline{t}_{m+1}}\overline{x}^i_m\oplus \frac{t_{m+1}}{\overline{t}_{m+1}}x_{i+m+1},\frac{\overline{t}_m}{\overline{t}_{m+1}}\overline{x}^{i+1}_m\oplus \frac{t_{m+1}}{\overline{t}_{m+1}}x_{i+m+2}\right)\\
&\leq \frac{\overline{t}_m}{\overline{t}_{m+1}}d(\overline{x}^i_m,\overline{x}^{i+1}_m)+\frac{t_{m+1}}{\overline{t}_{m+1}}d(x_{i+m+1},x_{i+m+2})\\
&\leq \frac{\overline{t}_m}{\overline{t}_{m+1}}\sum_{k=0}^{m}\frac{t_{k}}{\overline{t}_m}d(x_{i+k},x_{i+k+1})+\frac{t_{m+1}}{\overline{t}_{m+1}}d(x_{i+m+1},x_{i+m+2})\\
&= \sum_{k=0}^{m+1}\frac{t_{k}}{\overline{t}_{m+1}}d(x_{i+k},x_{i+k+1}),
\end{align*}
with $\oplus$ defined as before. As we have $d(x_n,x_{n+1})\to 0$ and $\sum_{n\in\mathbb{N}}t_n=+\infty$, this yields that $d(\overline{x}^i_m,\overline{x}^{i+1}_m)\to 0$ by the convergence of weighted averages (e.g.\ the Silverman-Toeplitz theorem). As $\overline{x}_{m_k}\to^w c_2$, we therefore get $\overline{x}^{n_0}_{m_k}\to^w c_2$. As $F(c_1,c_2)$ is convex and closed, it is weakly closed (see e.g.\ Lemma 3.2.1 in \cite{Bacak2014a}) and we hence have $c_2\in F(c_1,c_2)$. So in particular $d(c_2,c_1)\leq d(c_2,c_2)=0$, that is $c_1=c_2$.

Suppose now that $\lim_{n\to\infty}d^2(x_n,c_1)= \lim_{n\to\infty}d^2(x_n,c_2)=L$ and suppose for a contradiction that $c_1\neq c_2$. Let $\gamma$ be the unique geodesic joining $c_1$ and $c_2$, and write $\gamma'(\lambda):=\gamma(\lambda d(c_1,c_2))$. As $Z$ is convex, we get $\gamma'(\lambda)\in Z$ for all $\lambda\in [0,1]$. By assumption, we thus have that $d^2(x_n,\gamma'(\lambda))$ converges. The strong convexity of $d^2$ yields that
\[
d^2(x_n,\gamma'(\lambda))\leq (1-\lambda)d^2(x_n,c_1)+\lambda d^2(x_n,c_2)-\lambda(1-\lambda)d^2(c_1,c_2).
\]
Taking the limit as $n\to\infty$ thus yields
\[
\lim_{n\to\infty}d^2(x_n,\gamma'(\lambda))\leq L-\lambda(1-\lambda)d^2(c_1,c_2)<L
\]
for all $0<\lambda<1$, as $d^2(c_1,c_2)>0$. Therefore, one has $\lim_{n\to\infty}d^2(x_n,\gamma'(\lambda))< \lim_{n\to\infty}d^2(x_n,c_2)$ and hence can show as in the first part that $c_2\in F(\gamma'(\lambda),c_2)$. This gives
\[
d(c_2,\gamma'(\lambda))\leq d(c_2,c_2)=0
\]
for all $0<\lambda<1$, so that sending $\lambda$ to $0$ yields $c_1=c_2$, which is a contradiction.
\end{proof}

A direct lift to the stochastic is then the following proposition:

\begin{proposition}\label{weakErgodicConvergence}
Under the assumptions of Proposition \ref{weakConvergence}, suppose additionally that $X$ satisfies $(\overline{Q}_4)$, that $Z$ is convex and that $\mathfrak{W}(\overline{x}_n)\subseteq Z$ as well as $d(x_n,x_{n+1})\to 0$ a.s.\ for a sequence of weighted averages $(\overline{x}_n)$ defined via some $(t_n)\subseteq (0,+\infty)$ with $\sum_{n\in\mathbb{N}}t_n=+\infty$.

Then $(\overline{x}_n)$ weakly converges a.s.\ to a $Z$-valued random variable.
\end{proposition}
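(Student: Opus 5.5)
The plan mirrors the proof of item (4) of Proposition \ref{weakConvergence}, with Lemma \ref{nonlinearPassty} in place of Lemma \ref{nonlinearFejer}. First I invoke item (3) of Proposition \ref{weakConvergence} to obtain a set $\widetilde{\Omega}$ of full measure on which $d(x_n(\omega),z)$ converges for \emph{every} $z\in Z$ simultaneously. The uniformity in $z$ matters here, since Lemma \ref{nonlinearPassty} needs convergence of $d(x_n,z)$ along the whole segment between two candidate cluster points, not only on a countable set. By assumption there are further full-measure sets $\widehat{\Omega}_1$, on which $\mathfrak{W}(\overline{x}_n(\omega))\subseteq Z$, and $\widehat{\Omega}_2$, on which $d(x_n(\omega),x_{n+1}(\omega))\to 0$. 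Set $\Omega':=\widetilde{\Omega}\cap\widehat{\Omega}_1\cap\widehat{\Omega}_2$, which still has probability one.

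For fixed $\omega\in\Omega'$, the deterministic sequence $(x_n(\omega))$ satisfies all hypotheses of Lemma \ref{nonlinearPassty}: $X$ satisfies $(\overline{Q}_4)$, $Z$ is nonempty, closed and convex, $d(x_n(\omega),z)$ converges for all $z\in Z$, $\sum t_n=+\infty$, $\mathfrak{W}(\overline{x}_n(\omega))\subseteq Z$, and the asymptotic regularity holds. So $\overline{x}_n(\omega)\to^w x(\omega)$ for some $x(\omega)\in Z$. Outside $\Omega'$, define $x(\omega)$ to be any fixed point of $Z$.

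It remains to show that $x$ is, up to a null set, a random variable, and this is the only real obstacle. I plan to apply Proposition \ref{nonlinearPettis} to the sequence $(\overline{x}_n)$. This requires two things. First, each $\overline{x}_n$ must be measurable. I will show this by induction: $\overline{x}_{n+1}$ is the image of the measurable pair $(\overline{x}_n,x_{n+1})$ under the map $(u,v)\mapsto \frac{\overline{t}_n}{\overline{t}_{n+1}}u\oplus\frac{t_{n+1}}{\overline{t}_{n+1}}v$. By joint convexity of the metric this map is Lipschitz, hence continuous. Because $X$ is separable, $\mathcal{B}(X\times X)=\mathcal{B}(X)\otimes\mathcal{B}(X)$, so the composite is measurable. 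Second, $(\overline{x}_n)$ must be bounded a.s. This follows from item (2) of Proposition \ref{weakConvergence}, since for $z\in Z$ convexity of $d(\cdot,z)$ gives $d(\overline{x}_n,z)\le\sup_k d(x_k,z)$. Proposition \ref{nonlinearPettis}, applied on the finite measure space $(\Omega,\mathcal{F},\PP)$, then provides a measurable $y$ with $y=x$ a.s. Consequently $y$ is a.s.\ $Z$-valued; redefining $y$ on a null set (which is measurable because that set is measurable) makes it $Z$-valued everywhere, and $\overline{x}_n\to^w y$ a.s.
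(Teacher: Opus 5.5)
Your proposal is correct and follows essentially the same route as the paper: intersect the full-measure set from item (3) of Proposition \ref{weakConvergence} with the sets from the hypotheses, apply Lemma \ref{nonlinearPassty} pointwise, and use Proposition \ref{nonlinearPettis} to get a measurable $Z$-valued version of the limit. Your induction showing each $\overline{x}_n$ is measurable (Lipschitz averaging map plus $\mathcal{B}(X\times X)=\mathcal{B}(X)\otimes\mathcal{B}(X)$) and your boundedness check fill in details that the paper only asserts.
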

\begin{proof}
The previous Proposition \ref{weakConvergence}, item (3), yields that there exists a set $\widetilde{\Omega}$ with measure one such that for all $\omega\in\widetilde{\Omega}$ and $z\in Z$, the sequence given by $d(x_n(\omega),z)$ converges. By assumption, we have $\mathfrak{W}(\overline{x}_n(\omega))\subseteq Z$ as well as $d(x_n(\omega),x_{n+1}(\omega))\to 0$ for all $\omega\in \widehat{\Omega}$, and $\widehat{\Omega}$ with measure one. In particular, we therefore have $\mathfrak{W}(\overline{x}_n(\omega))\subseteq Z$, $d(x_n(\omega),x_{n+1}(\omega))\to 0$ and $d(x_n(\omega),z)$ converges for all $z\in Z$, for all $\omega\in \widehat{\Omega}\cap \widetilde{\Omega}$, and $\widehat{\Omega}\cap \widetilde{\Omega}$ still has measure one.

Lemma \ref{nonlinearPassty} applied for each $\omega\in \widehat{\Omega}\cap \widetilde{\Omega}$ individually yields that $(\overline{x}_n(\omega))$ weakly converges to some $x(\omega)\in Z$. As each $\overline{x}_n$ is measurable, Proposition \ref{nonlinearPettis} yields that there is a measurable $y$ with $x=y$ a.s. Thus $y$ is a.s.\ $Z$-valued and so w.l.o.g.\ we can assume that $y$ is $Z$-valued. Clearly, $(\overline{x}_n)$ converges weakly to $y$ almost surely.
\end{proof}

\section{Initial considerations on the method \eqref{SB}}\label{initial}

After these rather general considerations on the convergence of stochastic processes in Hadamard spaces, we now turn to the method \eqref{SB}. The present section is concerned with preliminary observations on that method which we derive before turning to our convergence results, in particular the measurability of the iteration \eqref{SB} under the assumptions (\textsf{A1}) -- (\textsf{A3}) and the existence of measurable maps satisfying (\textsf{A3}). In particular, from now on we assume, if not stated otherwise, that $X$ is a separable Hadamard space with the geodesic extension property and at least two points.

\subsection{Boundary cones and Busemann subgradients}

We first recall the basic definitions surrounding the method \eqref{SB}. We begin with the notions of boundary $X^\infty$ and boundary cone $CX^\infty$, where we refer to Chapter II.8 in \cite{BridsonHaefliger1999} for further discussions (see also \cite{GoodwinLewisLopezAcedoNicolae2025}).

Recall that the boundary of $X$ at infinity $X^\infty$ is the set of all equivalence classes of rays in $X$ under the equivalence relation of being asymptotic, where two rays $r,r'$ are called asymptotic if
\[
d(r(t),r'(t))\leq K\text{ for all }t\geq 0
\]
for some constant $K\geq 0$. As $X$ has the geodesic extension property and contains at least two points, $X^\infty$ is nonempty. We say that a ray $r$ has direction $\xi\in X^\infty$ if it belongs to the respective equivalence class. In particular, given any $x\in X$ and $\xi\in X^\infty$, there exists a unique ray issuing at $x$ with direction $\xi$. Following \cite{GoodwinLewisLopezAcedoNicolae2025}, we consider $X^\infty$ to be endowed with the cone topology (see Definition II.8.6 in \cite{BridsonHaefliger1999}). This makes the space first-countable, and moreover the topology is completely specified by its convergent sequences, with the key result in that vein discussed later in Lemma \ref{boundaryImpliesRay}. Indeed, we refer to \cite{GoodwinLewisLopezAcedoNicolae2025} for discussions on the particular benefits that the cone topology offers for optimisation purposes, compared to the perhaps more common topology induced by the angular metric.

The boundary cone $CX^\infty$ is now the usual Euclidean cone over $X^\infty$, i.e.\ $CX^\infty$ is the quotient of $X^\infty\times [0,\infty)$ under the equivalence relation 
\[
(\xi,s)\sim (\xi',s')\text{ if, and only if, }s=s'=0\text{ or }(\xi,s)=(\xi',s').
\]
Topologically, $X^\infty\times[0,\infty)$ is endowed with the product of the cone topology on $X^\infty$ and the usual metric topology on $[0,\infty)$, and $CX^\infty$ is endowed with the resulting quotient topology, also referred to as the cone topology. We denote an equivalence class of $(\xi,s)\in X^\infty\times [0,\infty)$ in $CX^\infty$ by $[\xi,s]$, and write $[0]$ for the equivalence class of $(\xi,0)$ for some/any $\xi\in X^\infty$. Again, convergence in $CX^\infty$ has a particularly useful characterization which we discuss later in Lemma \ref{boundaryConeimpliesBoundary}. Examples of $X^\infty$ and $CX^\infty$ for various spaces $X$ are discussed in \cite{GoodwinLewisLopezAcedoNicolae2025}.

We follow \cite{GoodwinLewisLopezAcedoNicolae2025} and define the ``pairing'' function $\langle\cdot,\cdot\rangle:X\times CX^\infty\to\mathbb{R}$ defined by
\[
\langle x,[\xi,s]\rangle:=\begin{cases}sb_{\xi}(x)&\text{if }s>0,\\0&\text{if }s=0,\end{cases}
\]
where we wrote $b_{\xi}$ for the Busemann function corresponding to the (unique) ray $r_{\overline{x},\xi}$ with direction $\xi$ and some arbitrary but fixed origin $\overline{x}$. Crucially, this pairing is continuous and $\langle \cdot,[\xi,s]\rangle$ is convex, $s$-Lipschitz and positively homogeneous in $s$, meaning that $\langle \cdot,[\xi,\alpha s]\rangle=\alpha\langle \cdot,[\xi,s]\rangle$ for all $[\xi,s]\in CX^\infty$ and $\alpha\geq 0$ (see Proposition 2.3 in \cite{GoodwinLewisLopezAcedoNicolae2025}).

We now turn to Busemann subgradients. Given a function $g:C\to\mathbb{R}$, a Busemann subgradient of $g$ at $x\in C$ is a point $[\xi,s]\in CX^\infty$ such that 
\[
x=\mathrm{argmin}_{y\in C}\left(g(y)-\langle y,[\xi,s]\rangle\right).
\]
We call $g$ Busemann subdifferentiable if $g$ has a Busemann subgradient at every $x\in C$. As discussed in \cite{GoodwinLewisLopezAcedoNicolae2025}, $x$ minimizes $g$ if, and only if, $[0]$ is a Busemann subgradient of $g$ at $x$. The work \cite{GoodwinLewisLopezAcedoNicolae2025} provides quite a broad theory for such subgradients, including a chain rule, notions of conjugacy and geometry of level sets. We also refer to \cite{GoodwinLewisLopezAcedoNicolae2025} for various examples of Busemann subdifferentiable functions and further discussion on relation to subgradients in the sense of \cite{LewisLopezAcedoNicolae2024}, defined using tangent cones of Alexandrov spaces.

As for properties of Busemann subdifferentiable maps, we here mention the following: First, a Busemann subdifferentiable $g$ is lsc (see p.\ 11 in \cite{GoodwinLewisLopezAcedoNicolae2025}) and if $C$ is convex, then $g$ is also convex (see Proposition 3.3 in \cite{GoodwinLewisLopezAcedoNicolae2025}). Further, if $g$ is Busemann subdifferentiable such that each point $x\in C$ admits a subgradient $[\xi,s]$ with $s\leq L$, then $g$ is $L$-Lipschitz, with the converse being true if $C$ is open (see Proposition 4.3 in \cite{GoodwinLewisLopezAcedoNicolae2025}).

\subsection{Measurability of \eqref{SB}}

We now show the measurability of the main iteration under the assumptions \eqref{A1} -- \eqref{A3}. For that, we recall the following folklore result on Carath\'eodory functions.

\begin{lemma}[folklore, see e.g.\ Lemma 8.2.6 in \cite{AubinFrankowska2009}]\label{Cara}
Let $X,Y$ be complete separable metric spaces and let $(T,\mathcal{T})$ be a measurable space. If $g:T\times X\to Y$ is a Carath\'eodory function, then it is $\mathcal{T}\otimes\mathcal{B}(X)$/$\mathcal{B}(Y)$ measurable.
\end{lemma}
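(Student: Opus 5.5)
The plan is the standard one: reduce to sequential continuity in the second variable plus measurability of sections, and build $g$ from countably many measurable pieces. Here $g:T\times X\to Y$ is Carath\'eodory, meaning $g(t,\cdot)$ is continuous for each $t\in T$ and $g(\cdot,x)$ is $\mathcal{T}$-measurable for each $x\in X$.

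First, fix a countable dense set $(z_k)$ in $X$. For each $n\in\mathbb{N}$, partition $X$ into countably many Borel sets $A^n_k$ of diameter at most $1/n$. Concretely,
\[
A^n_k:=B(z_k,1/(2n))\setminus\bigcup_{j<k}B(z_j,1/(2n)),
\]
which covers $X$ by density. Then define
\[
g_n(t,x):=g(t,z_k)\quad\text{for }x\in A^n_k.
\]
Each $g_n$ is $\mathcal{T}\otimes\mathcal{B}(X)$-measurable. Indeed, for an open $U\subseteq Y$,
\[
g_n^{-1}(U)=\bigcup_k\left(g(\cdot,z_k)^{-1}(U)\times A^n_k\right),
\]
which is a countable union of measurable rectangles.

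Second, show that $g_n\to g$ pointwise. For fixed $(t,x)$, let $k_n$ be the index with $x\in A^n_{k_n}$. Then $d(z_{k_n},x)\leq 1/(2n)\to 0$, so by continuity of $g(t,\cdot)$ we get $g_n(t,x)=g(t,z_{k_n})\to g(t,x)$ in $Y$.

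Third, conclude that $g$ is measurable, using the fact that a pointwise limit of measurable functions into a metric space is measurable. I would give the short proof of this to stay self-contained. For $F\subseteq Y$ closed, set $F_m:=\{y\mid d(y,F)<1/m\}$. Then
\[
g^{-1}(F)=\bigcap_m\bigcup_N\bigcap_{n\geq N}g_n^{-1}(F_m),
\]
which is measurable since each $F_m$ is open. Closed sets generate $\mathcal{B}(Y)$, so this finishes the argument.

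The only mildly delicate point is this last step: the limit lives in a general metric space rather than in $\mathbb{R}$, so the usual $\limsup$ argument is unavailable and one must use the closed-set characterization above. Separability of $Y$ is not even needed here, and completeness of $X$ and $Y$ plays no role; separability of $X$ is what provides the countable partition.
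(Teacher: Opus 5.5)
Your proof is correct and complete. The sets $A^n_k$ form a countable Borel partition of $X$ with $d(x,z_k)<1/(2n)$ on $A^n_k$. Each $g_n$ is $\mathcal{T}\otimes\mathcal{B}(X)$-measurable because its preimages are countable unions of measurable rectangles. The convergence $g_n\to g$ holds pointwise by continuity of $g(t,\cdot)$. Your identity for $g^{-1}(F)$ also checks out: the reverse inclusion uses that $y\mapsto d(y,F)$ is continuous and $F$ is closed, and the case $F=\emptyset$ is trivial.

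The paper gives no proof of this lemma; it only cites it as folklore from \cite{AubinFrankowska2009}, so there is no argument there to match step by step. The standard argument is often compressed into a single set identity. For closed nonempty $F\subseteq Y$,
\[
g^{-1}(F)=\bigcap_{m\in\mathbb{N}}\bigcup_{k\in\mathbb{N}}\Bigl(\{t\in T\mid d(g(t,z_k),F)<1/m\}\times B(z_k,1/m)\Bigr),
\]
which follows directly from density of $(z_k)$ and continuity of $g(t,\cdot)$. This version dispenses with both the partition and the limit lemma. Your two-step route is slightly longer, but it isolates the reusable fact that pointwise limits of measurable maps into a metric space are measurable.

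Your closing remark about the hypotheses is also right. Only separability of $X$ and metrizability of $Y$ are used, so the completeness assumptions in the statement are superfluous. They are harmless for the paper's applications, where the continuous variable ranges over the separable spaces $C$ and $X$.
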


As a function $f:E\times C\to\mathbb{R}$ satisfying \eqref{A1} and \eqref{A2} is measurable in its left argument by \eqref{A1} and continuous in its right by \eqref{A2} (recall the above discussion by which $f(e,\cdot)$ is in particular $L$-Lipschitz), it is thereby a Carath\'eodory function and so is $\mathcal{E}\otimes\mathcal{B}(C)$-measurable.

Further, we require the following two characterizations of convergence in $X^\infty$ as well as $CX^\infty$, both established in \cite{GoodwinLewisLopezAcedoNicolae2025} (see also the discussion therein for previous works mentioning this and related results).

\begin{lemma}[Proposition 2.2 in \cite{GoodwinLewisLopezAcedoNicolae2025}]\label{boundaryImpliesRay}
Fix $x\in X$. For any $(\xi_n)\subseteq X^\infty$ and $\xi\in X^\infty$, we have $\xi_n\to \xi$ if, and only if,
\[
r_{x,\xi_n}(\delta)\to r_{x,\xi}(\delta)
\]
for all $\delta>0$.
\end{lemma}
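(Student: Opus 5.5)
The plan is to unwind the definition of the cone topology on $X^\infty$ (Definition II.8.6 in \cite{BridsonHaefliger1999}) with the base point chosen to be the fixed point $x$. For a base point $x_0$, the cone topology on $\overline{X}=X\cup X^\infty$ has, at a boundary point $\xi$, the neighbourhood basis
\[
U(r_{x_0,\xi},R,\varepsilon):=\{\eta\in\overline{X}\mid d(x_0,\eta)>R,\ d(r_\eta(R),r_{x_0,\xi}(R))<\varepsilon\},\qquad R,\varepsilon>0.
\]
Here $r_\eta$ is the geodesic (segment or ray) from $x_0$ to $\eta$. Restricted to $X^\infty$, the condition $d(x_0,\eta)>R$ is automatic. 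So the traces $\{\eta\in X^\infty\mid d(r_{x_0,\eta}(R),r_{x_0,\xi}(R))<\varepsilon\}$ form a neighbourhood basis of $\xi$ in $X^\infty$.

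The first step is to invoke the basepoint independence of the cone topology (Proposition II.8.8 in \cite{BridsonHaefliger1999}). This lets us take $x_0=x$ for the arbitrary fixed $x$ in the statement. This is the one nontrivial geometric input. It rests on the fact that, for a ray $r'$ from another base point asymptotic to $r_{x,\xi}$, the function $t\mapsto d(r'(t),r_{x,\xi}(t))$ is bounded and convex, hence nonincreasing. Combined with comparison triangles, this shows that neighbourhoods based at one point contain neighbourhoods based at another. Since the paper already takes this topology as given following \cite{GoodwinLewisLopezAcedoNicolae2025}, I would simply cite it.

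With base point $x$ fixed, both directions are direct. If $\xi_n\to\xi$, then given $\delta>0$ and $\varepsilon>0$, the basic neighbourhood with $R=\delta$ eventually contains $\xi_n$. Hence $d(r_{x,\xi_n}(\delta),r_{x,\xi}(\delta))<\varepsilon$ for large $n$, i.e.\ $r_{x,\xi_n}(\delta)\to r_{x,\xi}(\delta)$. Conversely, assume pointwise convergence for all $\delta>0$ and take any open set containing $\xi$. It contains some basic neighbourhood with parameters $R,\varepsilon$, and convergence at $\delta=R$ puts $\xi_n$ into it eventually, so $\xi_n\to\xi$.

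One could additionally note that, by joint convexity of the metric and $r_{x,\xi_n}(0)=r_{x,\xi}(0)=x$, we have $d(r_{x,\xi_n}(t),r_{x,\xi}(t))\le \frac{t}{R}d(r_{x,\xi_n}(R),r_{x,\xi}(R))$ for $t\le R$. Thus convergence at a single $\delta$ implies convergence at all smaller times; this is not needed for the equivalence as stated. The main obstacle is only the basepoint-independence step, which is why I would cite it rather than reprove it.
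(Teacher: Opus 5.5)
Your argument is correct. The paper gives no proof of this lemma; it simply imports it from Proposition 2.2 of \cite{GoodwinLewisLopezAcedoNicolae2025}, and your unwinding of Definition II.8.6 of \cite{BridsonHaefliger1999} at the base point $x$, with basepoint independence (Proposition II.8.8) as the only geometric input, is the standard way to establish that fact.

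The one step worth making explicit is that the sets $U(r_{x,\xi},R,\varepsilon)$ form a local base at $\xi$, since Definition II.8.6 introduces them only as generators of the topology. This follows in two steps. First, the triangle inequality lets you recentre any generating set $U(c',R,\varepsilon')\ni\xi$ at the ray $r_{x,\xi}$. Second, finite intersections are handled by the estimate $d(r_{x,\eta}(r),r_{x,\xi}(r))\le \frac{r}{R}\,d(r_{x,\eta}(R),r_{x,\xi}(R))$ for $r\le R$ from your last paragraph. So that remark is what justifies your first step, not a superfluous aside.
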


\begin{lemma}[Lemma 2.4 in \cite{GoodwinLewisLopezAcedoNicolae2025}]\label{boundaryConeimpliesBoundary}
For any $([\xi_n,s_n])\subseteq CX^\infty$ and $[\xi,s]\in CX^\infty$, if $[\xi_n,s_n]\to [\xi,s]$ in $CX^\infty$, then 
\[
\begin{cases}s_n\to s &\text{if }s=0,\\s_n\to s\text{ and }\xi_n\to \xi &\text{if }s>0.\end{cases}
\]
\end{lemma}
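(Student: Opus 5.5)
The plan is to read both conclusions off the universal property of the quotient topology on $CX^\infty$. No geometric input is needed: Lemma \ref{boundaryImpliesRay} plays no role. Write $q:X^\infty\times[0,\infty)\to CX^\infty$ for the quotient map $(\xi,s)\mapsto[\xi,s]$. The point is that continuous maps preserve convergence of sequences in any topological space, so it suffices to exhibit suitable continuous maps out of $CX^\infty$, or out of an open subset of it.

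For the first conclusion I would consider the map $\sigma:CX^\infty\to[0,\infty)$, $\sigma([\xi,s]):=s$. It is well defined, since the only nontrivial identifications are among pairs $(\xi,0)$, all of which have second coordinate $0$. By construction $\sigma\circ q$ is the projection onto the second factor, which is continuous for the product topology. By the universal property of the quotient topology, $\sigma$ is therefore continuous. Hence $[\xi_n,s_n]\to[\xi,s]$ gives $s_n=\sigma([\xi_n,s_n])\to\sigma([\xi,s])=s$, whether or not $s=0$.

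For the second conclusion, assume $s>0$ and let $U:=X^\infty\times(0,\infty)$. The key steps are as follows.
\begin{enumerate}
\item $U$ is open in the product, and it is saturated, because each of its points forms a singleton equivalence class. Hence $q(U)$ is open in $CX^\infty$, since $q^{-1}(q(U))=U$.
\item $q|_U:U\to q(U)$ is a bijection.
\item $q|_U$ is an open map. Any open $V\subseteq U$ is again saturated, so $q^{-1}(q(V))=V$ is open, and thus $q(V)$ is open in $CX^\infty$.
\end{enumerate}
Together these show that $q|_U$ is a homeomorphism onto the open set $q(U)$.

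Since $[\xi,s]\in q(U)$ and $q(U)$ is open, the sequence $[\xi_n,s_n]$ eventually lies in $q(U)$. Note that $q(U)$ consists exactly of the classes with $s_n>0$; this is also clear from the first part, since $s_n\to s>0$. Applying the continuous inverse $(q|_U)^{-1}$ to the tail of the sequence yields $(\xi_n,s_n)\to(\xi,s)$ in the product topology. In particular $\xi_n\to\xi$ in the cone topology of $X^\infty$, because the projection onto the first factor is continuous.

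The only step needing care is the saturation argument in item 3, i.e.\ that open subsets of $U$ are mapped to open sets. This works precisely because the only identification made in $CX^\infty$ happens at the apex $s=0$, which is disjoint from $U$. One cannot expect more at $s=0$: every $\xi$ represents $[0]$, so no convergence of the $\xi_n$ can be concluded there.
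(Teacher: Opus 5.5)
Your proof is correct and complete. The paper gives no proof of this statement: it imports it as Lemma 2.4 of \cite{GoodwinLewisLopezAcedoNicolae2025} and uses it only as a black box in the proof of Lemma \ref{rayContinuous}. So there is no in-paper argument to compare against, and your proposal supplies a self-contained justification.

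Your route shows that the lemma is purely a fact about the quotient construction. The continuity of $[\xi,s]\mapsto s$ follows from the universal property of the quotient topology. The set $X^\infty\times(0,\infty)$ is open and saturated, so $q$ restricts to a homeomorphism onto an open subset of $CX^\infty$. Neither step uses Hadamard geometry, the geodesic extension property, Lemma \ref{boundaryImpliesRay}, or first countability of the cone topology. The same argument therefore works for the Euclidean cone over an arbitrary topological space, and for nets as well as sequences.

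Two points you handle only implicitly could be stated outright. First, the conclusion $\xi_n\to\xi$ only makes sense for the tail where $s_n>0$, since only there is the representative $\xi_n$ determined by $[\xi_n,s_n]$. Second, your first part gives $s_n\to s$ regardless of the case split, and that is all that is claimed when $s=0$.
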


Immediately, we can now see that rays are continuous in both direction and origin.

\begin{lemma}\label{rayContinuous}
The ray $r_{x,\xi}(s)$, seen as a function $X\times CX^\infty\to X$, is continuous both in $x\in X$ and $[\xi,s]\in CX^\infty$.
\end{lemma}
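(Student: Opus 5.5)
We prove joint continuity of the map $(x,[\xi,s])\mapsto r_{x,\xi}(s)$ from $X\times CX^\infty$ to $X$ (reading $r_{x,\xi}(0)=x$, so that the case $s=0$ is well defined independently of $\xi$). Since $X$ is metric and $CX^\infty$ is first-countable, sequential continuity suffices. So we fix sequences $x_n\to x$ in $X$ and $[\xi_n,s_n]\to[\xi,s]$ in $CX^\infty$, and aim to show $r_{x_n,\xi_n}(s_n)\to r_{x,\xi}(s)$.

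We first reduce to a common base point. The rays $r_{x_n,\xi_n}$ and $r_{x,\xi_n}$ are asymptotic, since both have direction $\xi_n$. In a Hadamard space the function $t\mapsto d(r_{x_n,\xi_n}(t),r_{x,\xi_n}(t))$ is convex (joint convexity of the metric) and bounded on $[0,\infty)$. Hence it is nonincreasing, and therefore bounded by its value $d(x_n,x)$ at $t=0$. This gives
\[
d(r_{x_n,\xi_n}(s_n),r_{x,\xi}(s))\leq d(x_n,x)+d(r_{x,\xi_n}(s_n),r_{x,\xi}(s)),
\]
so it remains to show $r_{x,\xi_n}(s_n)\to r_{x,\xi}(s)$.

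We split according to Lemma \ref{boundaryConeimpliesBoundary}.
\begin{itemize}
\item \emph{Case $s=0$.} We only know $s_n\to 0$, with no control on $\xi_n$. This is harmless, because $d(r_{x,\xi_n}(s_n),x)=s_n\to 0$ by the isometry property of rays, and $r_{x,\xi}(0)=x$.
\item \emph{Case $s>0$.} Here $s_n\to s$ and $\xi_n\to\xi$. We estimate
\[
d(r_{x,\xi_n}(s_n),r_{x,\xi}(s))\leq |s_n-s|+d(r_{x,\xi_n}(s),r_{x,\xi}(s)).
\]
The second term tends to $0$ by Lemma \ref{boundaryImpliesRay} applied with $\delta=s$.
\end{itemize}

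The only mildly delicate point is the monotonicity of the distance between asymptotic rays, which is standard (see Chapter II.8 in \cite{BridsonHaefliger1999}). It follows because a bounded convex function on $[0,\infty)$ is nonincreasing. If one only wants the separate continuity stated, this step is not even needed beyond continuity in $x$ for fixed $[\xi,s]$, which it also gives directly. I expect no real obstacle beyond this bookkeeping.
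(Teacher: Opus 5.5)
Your proof is correct, and for the dependence on $[\xi,s]$ it is the paper's argument: the same triangle inequality $|s_n-s|+d(r_{x,\xi_n}(s),r_{x,\xi}(s))$, and the same case split via Lemma \ref{boundaryConeimpliesBoundary} and Lemma \ref{boundaryImpliesRay}.

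The difference is the base point. The paper proves only separate continuity, and dismisses the $x$-dependence with the remark that rays depend continuously on their origin. You actually prove it, via the bound $d(r_{x',\eta}(t),r_{x,\eta}(t))\le d(x',x)$ for all $t\ge 0$ and all $\eta\in X^\infty$, using that a bounded convex function on $[0,\infty)$ is nonincreasing. Because this bound is uniform in the direction, it gives joint continuity. That is more than the lemma claims; Lemma \ref{seqMeas} only uses the separate, Carath\'eodory-type version.

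One justification needs repair. You reduce to sequences because ``$CX^\infty$ is first-countable''. The paper only asserts first-countability for $X^\infty$, and it can fail for the quotient cone. For $X=\ell^2$, $X^\infty$ is the unit sphere with the norm topology, and a diagonal argument along an orthonormal sequence shows that $[0]$ has no countable neighbourhood base in $CX^\infty$.

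The fix is easy. $CX^\infty$ is a quotient of the first-countable space $X^\infty\times[0,\infty)$, hence sequential. So your sequential argument with $x$ fixed does give continuity of $[\xi,s]\mapsto r_{x,\xi}(s)$; this is also what the paper's sequential argument implicitly relies on. Joint continuity then follows from your uniform Lipschitz bound by a neighbourhood argument:
\[
d(r_{x',\xi'}(s'),r_{x,\xi}(s))\le d(x',x)+d(r_{x,\xi'}(s'),r_{x,\xi}(s)).
\]
It therefore suffices to take $x'$ close to $x$ and $[\xi',s']$ in a suitable neighbourhood of $[\xi,s]$. You never need to know whether the product $X\times CX^\infty$ is sequential.
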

\begin{proof}
As geodesic rays depend continuously on its origin, $r_{x,\xi}(s)$ is continuous in $x$. Now let $[\xi_n,s_n]\to [\xi,s]$ in $CX^\infty$. We have
\begin{align*}
d(r_{x,\xi_n}(s_n),r_{x,\xi}(s))&\leq d(r_{x,\xi_n}(s_n),r_{x,\xi_n}(s)) +d(r_{x,\xi_n}(s),r_{x,\xi}(s))\\
&=\vert s_n-s\vert+d(r_{x,\xi_n}(s),r_{x,\xi}(s))
\end{align*}
using the fact that the rays are geodesics. Lemma \ref{boundaryConeimpliesBoundary} now implies that either $s_n\to s$ if $s=0$, or $s_n\to s$ and $\xi_n\to \xi$ if $s>0$. In the former case, i.e.\ if $s=0$, we have $d(r_{x,\xi_n}(s),r_{x,\xi}(s))=d(x,x)=0$ so that $d(r_{x,\xi_n}(s_n),r_{x,\xi}(s))\leq \vert s_n-s\vert\to 0$. In the second case, i.e.\ if $s>0$, we get that $\xi_n\to \xi$ implies that $r_{x,\xi_n}(\delta)\to r_{x,\xi}(\delta)$ for all $\delta>0$ using Lemma \ref{boundaryImpliesRay}. In particular $d(r_{x,\xi_n}(s),r_{x,\xi}(s))\to 0$ as $s>0$. As $\vert s_n-s\vert\to 0$ as before, we get $d(r_{x,\xi_n}(s_n),r_{x,\xi}(s))\to 0$ all the same.
\end{proof}

We now establish measurability of the sequence \eqref{SB}.

\begin{lemma}\label{seqMeas}
Let $X$ be a separable Hadamard space with the geodesic extension property and at least two points, and let $C\subseteq X$ be closed convex nonempty. Let $f:E\times C\to \mathbb{R}$ be a given functional and assume \eqref{A1} -- \eqref{A3}.

Then the sequence $(x_n)$ defined by \eqref{SB} with with $(t_n)$ and $(\zeta_{n+1})$, satisfying \eqref{parameters}, is measurable.
\end{lemma}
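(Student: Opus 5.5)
We argue by induction on $n$. The base case is trivial: $x_0\in C$ is a constant, hence measurable. For the inductive step, assume that $x_n:\Omega\to C$ is $\mathcal{F}$/$\mathcal{B}(C)$-measurable. By \eqref{parameters}, $\zeta_{n+1}:\Omega\to E$ is a random variable. Assumption \eqref{A3} then yields directly that $[\xi_n,s_n]=\mathsf{Busemann}_f(\zeta_{n+1},x_n)$ is measurable as a map $\Omega\to CX^\infty$. Multiplying the speed by the deterministic step size $t_n>0$ gives $[\xi_n,s_nt_n]$. This is again measurable, since the map $CX^\infty\to CX^\infty$, $[\xi,s]\mapsto[\xi,t_ns]$, is continuous: it is well defined on the quotient and comes from a continuous map on $X^\infty\times[0,\infty)$.

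Next consider the map $\Phi:X\times CX^\infty\to X$, $\Phi(x,[\xi,s]):=r_{x,\xi}(s)$, so that $r_{x_n,\xi_n}(s_nt_n)=\Phi(x_n,[\xi_n,s_nt_n])$. By Lemma \ref{rayContinuous}, $\Phi$ is continuous in each variable separately. We view $\Phi$ as a Carath\'eodory-type function, with $(\Omega,\mathcal{F})$-parametrisation entering through the $CX^\infty$ argument. Concretely, set $g(\omega,x):=\Phi(x,[\xi_n(\omega),s_n(\omega)t_n])$. For fixed $\omega$, $g(\omega,\cdot)$ is continuous in $x$. For fixed $x$, $g(\cdot,x)$ is a composition of the measurable map $\omega\mapsto[\xi_n,s_nt_n]$ with the continuous map $\Phi(x,\cdot)$, hence measurable. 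So $g$ is Carath\'eodory, and Lemma \ref{Cara} makes it $\mathcal{F}\otimes\mathcal{B}(X)$-measurable. The map $\omega\mapsto(\omega,x_n(\omega))$ is $\mathcal{F}$/$\mathcal{F}\otimes\mathcal{B}(X)$-measurable, so the composition $\omega\mapsto g(\omega,x_n(\omega))=r_{x_n,\xi_n}(s_nt_n)$ is measurable. Finally, the metric projection $P_C$ onto a closed convex set in a Hadamard space is nonexpansive, hence continuous, so $x_{n+1}=P_C(r_{x_n,\xi_n}(s_nt_n))$ is measurable. This closes the induction.

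The main subtlety is using only separate continuity of $\Phi$ in Lemma \ref{rayContinuous}, rather than joint continuity. The Carath\'eodory device above handles this, since it freezes the boundary argument as a function of $\omega$. One must check that $CX^\infty$-valued measurability, with respect to the Borel $\sigma$-algebra of the cone topology, is exactly what composition with the continuous map $\Phi(x,\cdot)$ requires; this is immediate from continuity.

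We also use that Lemma \ref{Cara} needs only the target $X$ and the space $X$ in the variable slot to be complete separable metric, not $CX^\infty$, which is why $\omega$ rather than $[\xi,s]$ is treated as the parameter. Continuity in $x$ for fixed direction follows from the fact that rays with the same direction issuing from different points satisfy $d(r_{x,\xi}(s),r_{y,\xi}(s))\le d(x,y)$. This is standard, since asymptotic rays have a convex and bounded, hence nonincreasing, distance function. It is what is meant by ``continuous in $x$'' in Lemma \ref{rayContinuous}.
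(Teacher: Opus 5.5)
Your proposal is correct and follows essentially the paper's argument: induction on $n$, assumption \eqref{A3}, joint measurability of the ray map via the Carath\'eodory property and Lemma \ref{Cara}, and continuity of $P_C$. The paper instead takes $(CX^\infty,\mathcal{B}(CX^\infty))$ itself as the parameter space $(T,\mathcal{T})$, obtaining $\mathcal{B}(X)\otimes\mathcal{B}(CX^\infty)$-measurability of $(x,[\xi,s])\mapsto r_{x,\xi}(s)$ once and for all; this is equally legitimate because Lemma \ref{Cara} requires nothing of the parameter space, so your closing remark that $\omega$ rather than $[\xi,s]$ must serve as the parameter is unnecessary, while your explicit check that $[\xi,s]\mapsto[\xi,t_ns]$ is continuous fills a small step the paper leaves implicit.
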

\begin{proof}
By Lemma \ref{rayContinuous}, we have that $r_{x,\xi}(s)$ is continuous in $x\in X$ and $[\xi,s]\in CX^\infty$. In particular, it is $\mathcal{B}(CX^\infty)$/$\mathcal{B}(X)$ measurable in $[\xi,s]$. Thereby, $r_{x,\xi}(s)$ is a Carath\'eodory function and so is $\mathcal{B}(X)\otimes\mathcal{B}(CX^\infty)$/$\mathcal{B}(X)$ measurable by Lemma \ref{Cara} (taking $(CX^\infty,\mathcal{B}(CX^\infty))$ as $(T,\mathcal{T})$). As $P_C$ is continuous, it is measurable. Clearly $x_0\in X$ is measurable and so, combined with assumption (\textsf{A3}), we get by induction that
\[
x_{n+1}:=P_C(r_{x_n,\xi_{n}}(s_nt_n))
\]
is measurable.
\end{proof}

\subsection{Measurable Busemann oracles}

It remains a question whether \eqref{A1} and \eqref{A2} suffice to guarantee the existence of an oracle satisfying \eqref{A3}. We here give a partial answer to this question for proper spaces $X$, relying on a few further parts of measurable selection theory as well as on the geometry of $CX^\infty$.

First, note that $CX^\infty$ becomes Polish when $X$ is locally compact. More concretely, if $X$ is a locally compact separable Hadamard space, then $CX^\infty$ is a locally compact Polish topological space, i.e.\ it is metrizable so that the resulting metric space is locally compact, complete and separable (see p.\ 7 of \cite{BertrandKloeckner2012}). Further, if $X$ is proper, then $X^\infty$ is actually compact (see Definition II.8.6 in \cite{BridsonHaefliger1999}). 

Now, define the Busemann subgradient via
\[
\partial_Bf:E\times C\to 2^{CX^\infty}, (e,x)\mapsto \{[\xi,s]\in CX^\infty\mid x\in \mathrm{argmin}_{y\in C}(f(e,y)-\langle y,[\xi,s]\rangle)\}.
\]
The existence of an oracle $\textsf{Busemann}$ satisfying assumption \eqref{A3} amounts to showing that for any $x:\Omega\to C$ and $\zeta:\Omega\to E$ measurable, the map $\partial_Bf(\zeta,x):\Omega\to 2^{CX^\infty}$ has a measurable selection, which amounts to $\partial_Bf(\zeta,x)$ being measurable.

Note that assumption \eqref{A2} yields that
\[
\partial_Bf(e,x)\subseteq \{[\xi,s]\in CX^\infty\mid s\leq L\}= (X^\infty\times [0,L])/\sim
\]
for all $e\in E$ and $x\in C$. If $X$ is proper, then $X^\infty$ is compact and therefore also $X^\infty\times [0,L]$ and $(X^\infty\times [0,L])/\sim$ are compact. 

We now first note that $\partial_Bf(e,\cdot)$ has a closed graph. For that, we rely on a characterization using Fenchel conjugacy. Following \cite{GoodwinLewisLopezAcedoNicolae2025} (see p.\ 16 therein), we define the Fenchel conjugate $f^\bullet:E\times CX^\infty\to (-\infty,+\infty]$ of $f$ by
\[
f^\bullet(e,[\xi,s]):=\sup_{y\in C}(\langle y,[\xi,s]\rangle-f(e,y)).
\]
Note that $f^\bullet(e,\cdot)$ is lsc as $\langle y,[\xi,s]\rangle$ is continuous in $[\xi,s]$ and that $f^\bullet(\cdot,[\xi,s])$ is measurable as $f(\cdot,x)$ is measurable for all $x$.

Crucially, we have the following result on Fenchel duality:

\begin{lemma}[eq.\ (15) in \cite{GoodwinLewisLopezAcedoNicolae2025}]\label{FenchelDuality}
For any $e\in E$, $x\in C$ and $[\xi,s]\in CX^\infty$:
\[
f^\bullet(e,[\xi,s])+f(e,x)\geq\langle x,[\xi,s]\rangle,
\]
with equality if, and only if, $[\xi,s]\in\partial_B f(e,x)$.
\end{lemma}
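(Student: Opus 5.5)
The inequality is a direct consequence of the definition of the conjugate, and the equality characterisation will come from unwinding the definition of $\partial_B f(e,x)$ as an argmin. Fix $e\in E$, $x\in C$ and $[\xi,s]\in CX^\infty$. Since the supremum defining $f^\bullet(e,[\xi,s])$ ranges over all $y\in C$, I take $y=x$ and get
\[
f^\bullet(e,[\xi,s])\geq \langle x,[\xi,s]\rangle-f(e,x).
\]
As $f(e,x)\in\mathbb{R}$, I can rearrange this to the claimed inequality, also when $f^\bullet(e,[\xi,s])=+\infty$.

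For the equality case I will rewrite the conjugate as
\[
-f^\bullet(e,[\xi,s])=\inf_{y\in C}\bigl(f(e,y)-\langle y,[\xi,s]\rangle\bigr),
\]
so that equality in the inequality says exactly $f(e,x)-\langle x,[\xi,s]\rangle=\inf_{y\in C}(f(e,y)-\langle y,[\xi,s]\rangle)$. In other words, equality holds if and only if $x$ attains the infimum of $h(y):=f(e,y)-\langle y,[\xi,s]\rangle$ over $C$. In that case $f^\bullet$ is automatically finite, because it equals the real number $\langle x,[\xi,s]\rangle-f(e,x)$. I will then check both directions against the definition of $\partial_B f(e,x)$, which as defined just before this lemma is the set of $[\xi,s]$ with $x\in\mathrm{argmin}_{y\in C}h(y)$.

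If $[\xi,s]\in\partial_Bf(e,x)$, then $h(x)\leq h(y)$ for all $y\in C$, so the infimum is attained at $x$ and equality follows. Conversely, if equality holds, then $h(x)=\inf_C h$, so $x\in\mathrm{argmin}_C h$ and hence $[\xi,s]\in\partial_B f(e,x)$.

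The step I expect to need care is the meaning of ``argmin'' in the original definition of a Busemann subgradient, which was written as $x=\mathrm{argmin}$ and so could suggest a \emph{unique} minimiser. I will follow the set-valued definition of $\partial_B f$ given just before the lemma, where membership $x\in\mathrm{argmin}$ is all that is required, and the argument above is complete in that reading. If uniqueness were intended, I would instead argue as in \cite{GoodwinLewisLopezAcedoNicolae2025}, where the argmin is understood as the set of minimisers. The remaining steps are bookkeeping with the case $s=0$, where $\langle\cdot,[0]\rangle\equiv 0$; it needs no separate treatment because only the pairing values enter.
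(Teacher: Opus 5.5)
Your proof is correct and is exactly the standard Fenchel--Young argument. The paper gives no proof of its own but cites eq.~(15) of \cite{GoodwinLewisLopezAcedoNicolae2025}, where the inequality comes from taking $y=x$ in the supremum defining $f^\bullet$, and equality holds precisely when $x$ attains $\inf_{y\in C}\bigl(f(e,y)-\langle y,[\xi,s]\rangle\bigr)$. Your set-valued reading of the argmin is the intended one, since it is how $\partial_B f$ is defined in the paper and it is what makes ``$x$ minimizes $g$ iff $[0]$ is a Busemann subgradient at $x$'' true, so the fallback you mention is not needed.
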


We can now use this result to prove that $\partial_Bf(e,\cdot)$ has a closed graph (following essentially the usual argument for subdifferentials, see e.g.\ Proposition 16.36 in \cite{BauschkeCombettes2017}):

\begin{lemma}\label{clsdGraph}
Fix $e\in E$. Let $(x_n)\subseteq C$ and $([\xi_n,s_n])\subseteq CX^\infty$ be given with $[\xi_n,s_n]\in\partial_B f(e,x_n)$ as well as $x_n\to x$ and $[\xi_n,s_n]\to [\xi,s]$. Then $[\xi,s]\in\partial_Bf(e,x)$.
\end{lemma}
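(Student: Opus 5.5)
We use the Fenchel characterization of Lemma \ref{FenchelDuality}, following the usual closed-graph argument for subdifferentials. Since $[\xi_n,s_n]\in\partial_Bf(e,x_n)$, the equality case of Lemma \ref{FenchelDuality} gives
\[
f^\bullet(e,[\xi_n,s_n])+f(e,x_n)=\langle x_n,[\xi_n,s_n]\rangle
\]
for all $n$. The plan is to pass to the limit in this identity and obtain $f^\bullet(e,[\xi,s])+f(e,x)\leq\langle x,[\xi,s]\rangle$. Combined with the general inequality of Lemma \ref{FenchelDuality}, this forces equality, and hence $[\xi,s]\in\partial_Bf(e,x)$.

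We treat each term separately.

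\begin{enumerate}
\item The right-hand side converges: $\langle x_n,[\xi_n,s_n]\rangle\to\langle x,[\xi,s]\rangle$, because the pairing $\langle\cdot,\cdot\rangle:X\times CX^\infty\to\mathbb{R}$ is jointly continuous (Proposition 2.3 in \cite{GoodwinLewisLopezAcedoNicolae2025}, as recalled above).
\item The term $f(e,x_n)$ converges to $f(e,x)$. Here $f(e,\cdot)$ is Busemann subdifferentiable and hence lsc, which already gives $\liminf_n f(e,x_n)\geq f(e,x)$. Under \eqref{A2} it is moreover $L$-Lipschitz, so in fact $f(e,x_n)\to f(e,x)$. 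Only the lsc inequality is needed.
\item The conjugate satisfies $\liminf_n f^\bullet(e,[\xi_n,s_n])\geq f^\bullet(e,[\xi,s])$, since $f^\bullet(e,\cdot)$ is a supremum of functions continuous in $[\xi,s]$ and is therefore lsc, as noted above.
\end{enumerate}

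Combining the three items,
\[
f^\bullet(e,[\xi,s])+f(e,x)\leq\liminf_{n}\bigl(f^\bullet(e,[\xi_n,s_n])+f(e,x_n)\bigr)=\lim_n\langle x_n,[\xi_n,s_n]\rangle=\langle x,[\xi,s]\rangle .
\]
In particular $f^\bullet(e,[\xi,s])<+\infty$, and the reverse inequality from Lemma \ref{FenchelDuality} yields equality.

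The main point requiring care is the joint continuity of the pairing at points of the form $[0]$. There $\xi_n$ need not converge, and Lemma \ref{boundaryConeimpliesBoundary} only gives $s_n\to0$. We therefore check this case directly, using that each $b_{\xi_n}$ is $1$-Lipschitz and vanishes at the fixed origin $\overline{x}$:
\[
|\langle x_n,[\xi_n,s_n]\rangle|\leq s_n\,d(x_n,\overline{x})\to0=\langle x,[0]\rangle .
\]
In the case $s>0$ we have $\xi_n\to\xi$ and $s_n\to s$, and the convergence follows from continuity of Busemann functions in the direction (via Lemma \ref{boundaryImpliesRay}) together with their $1$-Lipschitz property in $x$. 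If needed, this is exactly the content of the cited continuity statement.
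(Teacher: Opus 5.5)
Your proof is correct and is essentially the paper's argument: both pass to the limit in the Fenchel equality $f^\bullet(e,[\xi_n,s_n])+f(e,x_n)=\langle x_n,[\xi_n,s_n]\rangle$ using lower semicontinuity of $f(e,\cdot)$ and $f^\bullet(e,\cdot)$ together with continuity of the pairing, and then close the sandwich with the inequality of Lemma \ref{FenchelDuality}. Your direct check at $[0]$ is an extra the paper doesn't need, since it simply cites Proposition 2.3 of \cite{GoodwinLewisLopezAcedoNicolae2025}; for $s>0$, pointwise ray convergence from Lemma \ref{boundaryImpliesRay} alone does not give $b_{\xi_n}\to b_\xi$ without an estimate uniform in $t$, so falling back on the cited continuity result there is the right call.
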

\begin{proof}
Recall that both $f$ and $f^\bullet$ are lsc in their right argument. By Lemma \ref{FenchelDuality}, we get $f^\bullet(e,[\xi_n,s_n])+f(e,x_n)=\langle x_n,[\xi_n,s_n]\rangle$ from $[\xi_n,s_n]\in\partial_B f(e,x_n)$. Hence, using Lemma \ref{FenchelDuality} again, we obtain
\begin{align*}
\langle x,[\xi,s]\rangle&\leq f^\bullet(e,[\xi,s])+f(e,x)\\
&\leq \liminf_{n\to\infty} f^\bullet(e,[\xi_n,s_n])+\liminf_{n\to\infty} f(e,x_n)\\
&\leq \liminf_{n\to\infty} (f^\bullet(e,[\xi_n,s_n])+f(e,x_n))\\
&=\lim_{n\to\infty}\langle x_n,[\xi_n,s_n]\rangle=\langle x,[\xi,s]\rangle,
\end{align*}
using in particular also again that $\langle\cdot,\cdot\rangle$ is continuous. We hence have $f^\bullet(e,[\xi,s])+f(e,x)=\langle x,[\xi,s]\rangle$ and Lemma \ref{FenchelDuality} yields $[\xi,s]\in\partial_Bf(e,x)$.
\end{proof}

We can now derive that the graph of $\partial_Bf$ is measurable, and so using completeness of the space $(E,\mathcal{E},\mu)$ that $e\mapsto \mathrm{gra}(\partial_Bf(e,\cdot))$ is a measurable set-valued map.

\begin{lemma}\label{graphMeas}
The function $e\mapsto \mathrm{gra}(\partial_Bf(e,\cdot))$ is (weakly) measurable.
\end{lemma}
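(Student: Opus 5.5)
We will show that the set
\[
G:=\{(e,x,[\xi,s])\in E\times C\times CX^\infty\mid [\xi,s]\in\partial_Bf(e,x)\}
\]
lies in $\mathcal{E}\otimes\mathcal{B}(C)\otimes\mathcal{B}(CX^\infty)$. Regarded as a set-valued map $e\mapsto \mathrm{gra}(\partial_Bf(e,\cdot))\subseteq C\times CX^\infty$, this says the map is graph measurable. Its values are closed by Lemma \ref{clsdGraph}. Since $(E,\mathcal{E},\mu)$ is complete and $C\times CX^\infty$ is Polish (in the locally compact setting of this subsection), graph measurability is equivalent to weak measurability for maps with closed values, by the Aubin--Frankowska characterization quoted before Lemma \ref{minMeas}. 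This applies to nonempty values; if some values are empty, we first note that $\{e\mid \text{value nonempty}\}$ is measurable, because it is the projection of a measurable graph and hence measurable over a complete space by the projection theorem. Under \eqref{A1} every value is in fact nonempty.

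To obtain measurability of $G$, we use Lemma \ref{FenchelDuality}:
\[
G=\{(e,x,[\xi,s])\mid f^\bullet(e,[\xi,s])+f(e,x)-\langle x,[\xi,s]\rangle\le 0\}.
\]
It therefore suffices to show that $h(e,x,[\xi,s]):=f^\bullet(e,[\xi,s])+f(e,x)-\langle x,[\xi,s]\rangle$ is jointly measurable. The term $f(e,x)$ is jointly measurable because $f$ is Carathéodory (Lemma \ref{Cara}, as noted after it). The pairing term is continuous by Proposition 2.3 in \cite{GoodwinLewisLopezAcedoNicolae2025}, hence Borel.

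For $f^\bullet$, take a countable dense set $(y_k)\subseteq C$. For fixed $e$ and $[\xi,s]$, the map $y\mapsto \langle y,[\xi,s]\rangle-f(e,y)$ is continuous, since both terms are Lipschitz. The supremum over $C$ therefore equals the supremum over the $y_k$:
\[
f^\bullet(e,[\xi,s])=\sup_{k}\bigl(\langle y_k,[\xi,s]\rangle-f(e,y_k)\bigr).
\]
Each term is $\mathcal{E}\otimes\mathcal{B}(CX^\infty)$-measurable, being a measurable function of $e$ plus a continuous function of $[\xi,s]$. So $f^\bullet$ is jointly measurable as a countable supremum, with values in $(-\infty,+\infty]$. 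Hence $h$ is measurable (no $\infty-\infty$ issue arises, since $f^\bullet>-\infty$) and $G=h^{-1}((-\infty,0])$ is measurable.

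The main obstacle is transferring from joint measurability of $G$ to weak measurability of $e\mapsto G_e$. This transfer needs completeness of $\mu$, separability and completeness of $C\times CX^\infty$, and closedness of $G_e$, which is exactly where Lemma \ref{clsdGraph} and local compactness of $X$ (making $CX^\infty$ Polish) enter. We would invoke Theorem 8.1.4 in \cite{AubinFrankowska2009}. For an open set $U$, we write $\{e\mid G_e\cap U\neq\emptyset\}$ as the projection onto $E$ of $G\cap(E\times U)$, which is measurable by the measurable projection theorem on complete spaces.
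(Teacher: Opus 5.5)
Your argument is correct and is essentially the paper's proof. The paper applies Lemma~\ref{minMeas} to the Carath\'eodory function $(e,[\xi,s],y)\mapsto f(e,y)-\langle y,[\xi,s]\rangle$ and pulls back the graph of the argmin map along a coordinate permutation; your $f^\bullet$ is exactly $-m$ from the proof of that lemma, so writing $G=h^{-1}((-\infty,0])$ via Lemma~\ref{FenchelDuality} just inlines Lemma~\ref{minMeas}, and your final passage to weak measurability via completeness of $\mu$ (Theorem 8.1.4 in \cite{AubinFrankowska2009}, or your measurable-projection variant, which does not even need closed nonempty values) matches the paper.
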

\begin{proof}
Define 
\[
\varphi(e,[\xi,s])=\mathrm{argmin}_{y\in C}\left(f(e,y)-\langle y,[\xi,s]\rangle\right).
\]
Then for $g(e,x,[\xi,s])=(e,[\xi,s],x)$, which is measurable, we have
\[
\mathrm{gra}(\partial_Bf)=g^{-1}(\mathrm{gra}(\varphi)).
\]
Note that $f(e,y)-\langle y,[\xi,s]\rangle$ is jointly measurable in $(e,[\xi,s])$ as well as continuous in $y$. Hence $\mathrm{gra}(\varphi)$ is measurable by Lemma \ref{minMeas} and so $\mathrm{gra}(\partial_Bf)$ is measurable. The result now follows using completeness of $(E,\mathcal{E},\mu)$ (recall Theorem 8.1.4 in \cite{AubinFrankowska2009}).
\end{proof}

The last result we need is the following composition lemma which in works such as \cite{AubinFrankowska2009} (see Theorem 8.2.8 therein, which provides even more general results) is usually derived under a completeness assumption. We however here need to dispense of that, and for that follow the approach of Rockafellar \cite{Rockafellar1976b} (whose proof goes through for our special case, albeit being originally phrased for Euclidean spaces):

\begin{lemma}[essentially Theorem 1N in \cite{Rockafellar1976b}]\label{composition}
Let $X,Y$ be complete separable metric spaces, with $Y$ proper, and $(T,\mathcal{T})$ be a measurable space. Let $A:T\times X\to 2^Y$ be such that $t\mapsto \mathrm{gra}(A(t,\cdot))$ is (weakly) measurable with closed values. Then for all measurable $x:T\to X$, the map
\[
\varphi(t):=A(t,x(t))
\]
is (weakly) measurable.
\end{lemma}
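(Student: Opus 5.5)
Since $Y$ is proper, it suffices to show that $\varphi^{-1}(K):=\{t\in T\mid A(t,x(t))\cap K\neq\emptyset\}\in\mathcal{T}$ for every compact $K\subseteq Y$. Indeed, every open $U\subseteq Y$ is a countable union of compact sets $K_j$, since $Y$ is proper and separable: take closed balls with centres in a countable dense set and rational radii, contained in $U$. Then $\varphi^{-1}(U)=\bigcup_j\varphi^{-1}(K_j)$. This is Rockafellar's reduction, and it is where properness of $Y$ enters.

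Next, fix a compact $K$. Using weak measurability of $t\mapsto G(t):=\mathrm{gra}(A(t,\cdot))$, which has closed values in the complete separable space $X\times Y$, I will express $G(t)$ through distance functions. By the Castaing-type characterization (Theorem 8.1.4 in \cite{AubinFrankowska2009}, whose distance-function part needs no completeness), the map $t\mapsto d((u,v),G(t))$ is measurable for each $(u,v)$, where $d$ denotes the max-metric on $X\times Y$. It is also $1$-Lipschitz in $(u,v)$, so $(t,u,v)\mapsto d((u,v),G(t))$ is Carath\'eodory. Since $G(t)$ is closed, we have
\[
A(t,x(t))\cap K\neq\emptyset\iff \inf_{y\in K} d((x(t),y),G(t))=0.
\]
Here the backward direction uses compactness of $K$ and closedness of $G(t)$: the infimum is attained at some $y\in K$ by lower semicontinuity, in fact by continuity, and then $(x(t),y)\in G(t)$.

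Now take a countable dense subset $(k_m)$ of $K$. By continuity in $y$, the infimum over $K$ equals $\inf_m d((x(t),k_m),G(t))$. Each map $t\mapsto d((x(t),k_m),G(t))$ is measurable, being the composition of the Carath\'eodory function $(t,u)\mapsto d((u,k_m),G(t))$ with $t\mapsto(t,x(t))$, using Lemma \ref{Cara} and measurability of $x$. Therefore $\varphi^{-1}(K)$ is the zero set of a countable infimum of measurable functions, hence lies in $\mathcal{T}$.

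I expect the main obstacle to be justifying measurability of $t\mapsto d((u,v),G(t))$ from weak measurability without completeness of $T$. I would argue this directly: $d((u,v),G(t))<r$ if and only if $G(t)\cap B((u,v),r)\neq\emptyset$, and the ball is open. So the sublevel sets $\{t\mid d((u,v),G(t))<r\}$ are exactly preimages of open sets under $G$, which lie in $\mathcal{T}$ by weak measurability. This avoids any selection theorem. The remaining care is checking that the attainment argument really uses only compactness of $K$, and that empty values of $A$ are harmless, which they are, since the distance to an empty set is $+\infty$.
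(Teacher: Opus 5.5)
Your proof is correct, and its first step is the same as the paper's. Both use properness of $Y$ to write an open set as a countable union of compact sets, so it suffices to show $\{t\mid A(t,x(t))\cap K\neq\emptyset\}\in\mathcal{T}$ for compact $K$.

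The routes diverge at the core step. The paper views this set as the set where the intersection of two correspondences is nonempty: the closed-valued $t\mapsto\mathrm{gra}(A(t,\cdot))$ and the compact-valued $t\mapsto\{x(t)\}\times K$. It then appeals to Lemma 18.4 of \cite{AliprantisBorder2006} twice: once for weak measurability of $t\mapsto\{x(t)\}\times K$, and once for measurability of such intersections.

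You do this step by hand, in three moves:
\begin{enumerate}
\item The identity $\{t\mid d((u,v),G(t))<r\}=G^{-1}(B((u,v),r))$ turns weak measurability directly into measurability of the distance functions.
\item Lemma \ref{Cara} lets you substitute $u=x(t)$.
\item Compactness of $K$ and closedness of $G(t)$ turn $\inf_m d((x(t),k_m),G(t))=0$ into an actual point of $A(t,x(t))\cap K$.
\end{enumerate}
In effect you reprove the special case of the intersection lemma needed here. You exploit the fact that $\{x(t)\}\times K$ carries the countable dense set $\{(x(t),k_m)\}$, which moves measurably with $t$.

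What each approach buys:
\begin{itemize}
\item The paper's version is shorter and modular. It would work verbatim with any compact-valued weakly measurable correspondence in place of $\{x(t)\}\times K$.
\item Yours is self-contained. It uses only the definition of weak measurability and a lemma already stated in the paper. It also makes transparent that neither completeness of $(T,\mathcal{T})$ nor any selection theorem is involved, which is exactly what the paper is at pains to ensure.
\end{itemize}

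One cosmetic remark: $d((u,v),\emptyset)=+\infty$, so when you invoke Lemma \ref{Cara} the distance function should be regarded as $[0,+\infty]$-valued (a compact metrizable target). Alternatively, replace it by $\min\{d(\cdot,G(t)),1\}$. Nothing else in your argument changes.
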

\begin{proof}
Let $C\subseteq Y$ be open. Using that $Y$ is proper, we express $C$ as the union over a sequence of compact sets $(C_k)$. Define the map $\varphi_k(t):=\{x(t)\}\times C_k$, and note that $\varphi_k$ has compact images for every $k$. By Lemma 18.4 in \cite{AliprantisBorder2006}, each $\varphi_k$ is (weakly) measurable. As $C$ is open and $\varphi(t)$ is closed, we have
\begin{align*}
\varphi^{-1}(C)&=\{t\in T\mid C\cap A(t,x(t))\neq\emptyset\}\\
&=\bigcup_{k\in\mathbb{N}}\{t\in T\mid (x,y)\in \mathrm{gra}(A(t,\cdot))\text{ and }x=x(t),y\in C_k\}\\
&=\bigcup_{k\in\mathbb{N}}\{t\in T\mid \mathrm{gra}(A(t,\cdot))\cap\varphi_k(t)\neq\emptyset\}.
\end{align*}
By Lemma 18.4 in \cite{AliprantisBorder2006}, we have that $\mathrm{gra}(A(t,\cdot))\cap\varphi_k(t)$ is (weakly) measurable (where it is crucial that $\varphi_k$ has compact images). In particular, the set 
\[
\{t\in T\mid \mathrm{gra}(A(t,\cdot))\cap\varphi_k(t)\neq\emptyset\}=(\mathrm{gra}(A(t,\cdot))\cap\varphi_k(t))^{-1}(X\times Y)
\]
and hence $\varphi^{-1}(C)$ is measurable. As $C$ was arbitrary, we get that $\varphi$ is (weakly) measurable.
\end{proof}

We recall the Kuratowski–Ryll-Nardzewski selection theorem:

\begin{lemma}[see e.g.\ Theorem 8.1.3 in \cite{AubinFrankowska2009}]\label{Selection}
Let $X$ be a complete separable metric space and let $(T,\mathcal{T})$ be a measurable space. If $\varphi:T\to 2^X$ is a set-valued measurable map such that $\varphi(t)$ is non-empty and closed for any $t\in T$, then there exists a measurable function $x: T\to X$ such that $x(t)\in\varphi(t)$ for all $t\in T$.
\end{lemma}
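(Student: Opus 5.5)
We will follow the classical successive-approximation construction. Fix a countable dense sequence $(y_k)$ in $X$, which exists since $X$ is separable. The aim is to build measurable maps $x_n:T\to X$, each taking only countably many values in $\{y_k\}$, such that for all $t\in T$ and $n\geq 1$:
\[
d(x_n(t),\varphi(t))<2^{-n}\quad\text{and}\quad d(x_n(t),x_{n-1}(t))<2^{-n+1}.
\]
Then $(x_n(t))$ is Cauchy and converges, by completeness of $X$, to a point $x(t)$. The map $x$ is measurable as a pointwise limit of measurable maps into a metric space. Moreover $d(x(t),\varphi(t))=0$, so $x(t)\in\varphi(t)$ because $\varphi(t)$ is closed.

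For the construction, let $x_0(t):=y_k$ with $k$ the least index such that $\varphi(t)\cap B(y_k,1)\neq\emptyset$. Such a $k$ exists because $\varphi(t)\neq\emptyset$ and $(y_k)$ is dense. Inductively, let $x_n(t):=y_k$ with $k$ the least index satisfying
\[
d(y_k,x_{n-1}(t))<2^{-n+1}\quad\text{and}\quad \varphi(t)\cap B(y_k,2^{-n})\neq\emptyset.
\]
Existence of such a $k$ uses the inductive invariant. There is $z\in\varphi(t)$ with $d(z,x_{n-1}(t))<2^{-n+1}$, and by density we choose $y_k$ with $d(y_k,z)<\min\{2^{-n},\,2^{-n+1}-d(z,x_{n-1}(t))\}$. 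By the triangle inequality this $y_k$ satisfies both conditions, and this $k$ then witnesses nonemptiness.

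The only point requiring care is measurability of each $x_n$. Since $x_n$ takes countably many values, it suffices to show that each set $\{x_n=y_k\}$ is measurable. This set is a finite Boolean combination of sets of two kinds.

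First, sets $\{t\mid \varphi(t)\cap B(y_j,2^{-n})\neq\emptyset\}=\varphi^{-1}(B(y_j,2^{-n}))$. These are measurable by the assumed (weak) measurability of $\varphi$, since open balls are open.

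Second, sets $\{t\mid d(y_j,x_{n-1}(t))<2^{-n+1}\}$. These are measurable because $x_{n-1}$ is measurable by induction.

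Indeed, $\{x_n=y_k\}$ is the set where the pair of conditions holds for $j=k$ and fails for every $j<k$. The same reasoning handles $x_0$. This gives measurability of every $x_n$ and hence of the limit $x$, completing the proof.
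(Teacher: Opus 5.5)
Your proof is correct. It is the classical successive-approximation proof of the Kuratowski--Ryll-Nardzewski theorem; the paper gives no proof of its own and cites Theorem 8.1.3 in \cite{AubinFrankowska2009}, whose argument is essentially this one. One small precision: if the dense sequence $(y_k)$ has repetitions, $\{x_n=y_k\}$ is not literally the set where $k$ is the least admissible index. Either take the $y_k$ pairwise distinct, or note that $x_n^{-1}(B)=\bigcup_{k:\,y_k\in B}\{t\mid k_n(t)=k\}$, where $k_n$ is the measurable minimal-index function.
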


We now obtain the following result on the existence of Busemann subgradient oracles:

\begin{proposition}\label{oracleExistence}
Let $X$ be a separable Hadamard space with the geodesic extension property and at least two points, and assume that $X$ is proper. Let $C\subseteq X$ be closed convex nonempty. Further, let $f:E\times C\to \mathbb{R}$ be a given functional, assuming \eqref{A1} and \eqref{A2}. 

Then for any $x:\Omega\to C$ and $\zeta:\Omega\to E$ measurable, the map $\partial_Bf(\zeta,x):\Omega\to 2^{CX^\infty}$ is measurable. In particular, it has a measurable selection.
\end{proposition}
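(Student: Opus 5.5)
The plan is to reduce the statement to Lemma \ref{composition} and then apply Lemma \ref{Selection}. Concretely, I would take $T:=E$ with $\mathcal{T}:=\mathcal{E}$, take the domain space in Lemma \ref{composition} to be $C$ (closed in $X$, hence complete and separable), and take $Y:=CX^\infty$. For $Y$ I would restrict to the compact piece $K_L:=(X^\infty\times[0,L])/\sim$, or simply use $CX^\infty$ itself. Since $X$ is proper, $CX^\infty$ is a locally compact Polish space, and after choosing a suitable compatible metric it is proper. Alternatively, the proof of Lemma \ref{composition} only needs the open set $C'\subseteq Y$ to be a countable union of compacts, which holds in any locally compact separable metric space. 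With $A(e,x):=\partial_Bf(e,x)$, Lemma \ref{graphMeas} gives that $e\mapsto\mathrm{gra}(\partial_Bf(e,\cdot))$ is measurable. Lemma \ref{clsdGraph} gives that these graphs are closed in $C\times CX^\infty$, so in particular each $\partial_Bf(e,x)$ is closed.

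The issue is that the composition required is $\omega\mapsto\partial_Bf(\zeta(\omega),x(\omega))$, with the first argument also random. To handle this, I would apply Lemma \ref{composition} on the product measurable space $T:=\Omega\times E$ with the set-valued map $\tilde A((\omega,e),x):=\partial_Bf(e,x)$. Its graph map is measurable with respect to $\mathcal{F}\otimes\mathcal{E}$, being the composition of the projection to $E$ with the measurable map from Lemma \ref{graphMeas}. Together with the measurable function $(\omega,e)\mapsto x(\omega)$, this shows that $(\omega,e)\mapsto\partial_Bf(e,x(\omega))$ is measurable. Composing with the measurable map $\omega\mapsto(\omega,\zeta(\omega))$ then gives measurability of $\omega\mapsto\partial_Bf(\zeta(\omega),x(\omega))$, since preimages of the sets $\{\cdot\cap C'\neq\emptyset\}$ pull back along measurable maps.

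There is a subtlety here. Lemma \ref{graphMeas} used completeness of $\mu$ to pass from a graph-measurable set to a (weakly) measurable map, so the measurability is with respect to $\mathcal{E}$, which is $\mu$-complete. The composition $\omega\mapsto\zeta(\omega)$ then needs $\zeta$ to be $\mathcal{F}/\mathcal{E}$-measurable, which is exactly what is assumed when $\zeta$ is an $E$-valued random variable. I expect this step to be fine, but it is the main point to check.

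Finally, each $\partial_Bf(\zeta(\omega),x(\omega))$ is nonempty by \eqref{A1} (Busemann subdifferentiability) and closed by Lemma \ref{clsdGraph}. Since $CX^\infty$ is Polish, Lemma \ref{Selection} then yields a measurable selection $\Omega\to CX^\infty$, which is the claimed oracle. I expect the main obstacle to be verifying the hypotheses of Lemma \ref{composition}, namely properness of a metric on $CX^\infty$ (or at least $\sigma$-compactness of its open sets, which is all the proof uses) and the closedness of the graphs. The former I would settle via local compactness and separability of $CX^\infty$.
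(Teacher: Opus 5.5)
Your proposal is correct and is essentially the paper's proof. Lemma \ref{graphMeas} and Lemma \ref{clsdGraph} give a measurable, closed-valued graph map; Lemma \ref{composition}, with values confined by \eqref{A2} to the compact set $(X^\infty\times[0,L])/\sim$, gives measurability of $\omega\mapsto\partial_Bf(\zeta(\omega),x(\omega))$; and Lemma \ref{Selection} yields the selection. Your explicit treatment of the random first argument and of nonemptiness via \eqref{A1} fills in details the paper leaves implicit, though you could skip the product-space detour by applying Lemma \ref{composition} directly on $T=\Omega$ to $A(\omega,\cdot):=\partial_Bf(\zeta(\omega),\cdot)$, whose graph map is the measurable map of Lemma \ref{graphMeas} composed with $\zeta$.
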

\begin{proof}
Lemma \ref{graphMeas} yields that $e\mapsto \mathrm{gra}(\partial_Bf(e,\cdot))$ is (weakly) measurable and Lemma \ref{clsdGraph} yields that it has closed values, so that measurability of $\partial_Bf(\zeta,x)$ follows using Lemma \ref{composition} (recall that $\partial_Bf$ maps to subsets of a compact space). As $\partial_Bf(\zeta,x)$ is closed, Lemma \ref{Selection} yields the existence of a measurable selection.
\end{proof}

\section{Proofs of the main results}

We now turn to our convergence results for \eqref{SB}. For that, we now briefly recall the setup: Let $(E,\mathcal{E},\mu)$ and $(\Omega,\mathcal{F},\PP)$ be probability spaces, with $(E,\mathcal{E},\mu)$ complete, exactly as before, and let $X$ be a separable Hadamard space with the geodesic extension property and at least two points. Further, fix a closed convex nonempty subset $C\subseteq X$ and let $f:E\times C\to \mathbb{R}$ be a function with properties (\textsf{A1}) -- (\textsf{A3}) as above. Let $(x_n)$ be the iteration given by \eqref{SB}, and assume \eqref{parameters}.

The convergence proof now proceeds by showing that the iteration in question is stochastically quasi-Fej\'er monotone. For that, we introduced some notation. Define the filtration
\[
\mathsf{F}_n:=\sigma(\zeta_1,\dots,\zeta_n,x_0,\dots,x_n)
\]
and write $\EE_n$ for the conditional expectation $\mathbb{E}[\cdot \mid\mathsf{F}_n]$.

The key geometric ingredient is the following property of the Busemann subgradients:

\begin{lemma}[see e.g.\ Lemma 6.1 in \cite{GoodwinLewisLopezAcedoNicolae2025}]\label{BSLemma}
Let $g:C\to \mathbb{R}$ be a given function for a non-empty closed and convex set $C\subseteq X$ and let $[\xi,s]$ be a Busemann subgradient of $f$ at $x\in C$. Given $t>0$, define
\[
x^+:=\begin{cases}P_C(r_{x,\xi}(st))&\text{if }s>0,\\x&\text{if }s=0.\end{cases}
\]
Then for any $y\in C$:
\[
d^2(x^+,y)\leq d^2(x,y)-2t(f(x)-f(y))+s^2t^2.
\]
\end{lemma}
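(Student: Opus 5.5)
We need to prove Lemma \ref{BSLemma}: $d^2(x^+,y)\le d^2(x,y)-2t(g(x)-g(y))+s^2t^2$. Note the statement writes $f$ for $g$; we read it as $g$ throughout.

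The case $s=0$ is immediate. Then $[0]$ is a subgradient, so $x$ minimizes $g$ and $g(x)-g(y)\le 0$. Since $x^+=x$, the claimed inequality holds trivially.

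For $s>0$ the plan is as follows. Write $z:=r_{x,\xi}(st)$, and let $b=b_\xi$ be the Busemann function normalized so that $b(x)=0$. Normalization only changes $\langle\cdot,[\xi,s]\rangle$ by an additive constant, so it does not affect the subgradient property.

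The first step uses nonexpansiveness of the metric projection onto the closed convex set $C$, together with $P_C(y)=y$ for $y\in C$. This gives $d(x^+,y)\le d(z,y)$, so it suffices to bound $d^2(z,y)$.

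The second step, which is the key geometric one, is the inequality
\[
d^2(r_{x,\xi}(\tau),y)\le d^2(x,y)+2\tau b(y)+\tau^2\qquad(\tau\ge 0).
\]
To prove it, fix a large $T>\tau$ and apply the $\CAT$ inequality to the geodesic $[x,r_{x,\xi}(T)]$, which is a piece of the ray, at parameter $\lambda=\tau/T$. This yields
\[
d^2(r(\tau),y)\le (1-\tau/T)d^2(x,y)+(\tau/T)d^2(r(T),y)-\tau(T-\tau).
\]
Next write $d(r(T),y)=T+\beta_T$, where $\beta_T:=d(y,r(T))-T\to b(y)$. Then $d^2(r(T),y)=T^2+2T\beta_T+\beta_T^2$. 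Substituting, the terms of order $T$ cancel, and the right-hand side equals
\[
(1-\tau/T)d^2(x,y)+\tau^2+2\tau\beta_T+\tau\beta_T^2/T.
\]
Since $\beta_T$ is bounded by $d(x,y)$, letting $T\to\infty$ gives the claimed inequality. This limiting computation is the main point of care.

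The third step applies this with $\tau=st$, giving
\[
d^2(z,y)\le d^2(x,y)+2t\,s\,b(y)+s^2t^2=d^2(x,y)+2t\langle y,[\xi,s]\rangle+s^2t^2 .
\]

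The fourth step uses the subgradient property. Since $x$ minimizes $g(\cdot)-\langle\cdot,[\xi,s]\rangle$ on $C$ and $\langle x,[\xi,s]\rangle=sb(x)=0$, we get $g(x)\le g(y)-\langle y,[\xi,s]\rangle$. Equivalently, $\langle y,[\xi,s]\rangle\le g(y)-g(x)$. Combining this with the first three steps yields
\[
d^2(x^+,y)\le d^2(x,y)-2t(g(x)-g(y))+s^2t^2 .
\]
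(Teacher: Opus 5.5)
Your proof is correct and is essentially the standard argument behind Lemma 6.1 of \cite{GoodwinLewisLopezAcedoNicolae2025}, which the paper cites without reproducing. That argument has three steps: nonexpansiveness of $P_C$; the ray inequality $d^2(r_{x,\xi}(\tau),y)\le d^2(x,y)+2\tau\, b(y)+\tau^2$, obtained from the CAT(0) inequality on $[x,r_{x,\xi}(T)]$ with $T\to\infty$; and the subgradient inequality $s\,b(y)\le g(y)-g(x)$. The only fact you use implicitly is that Busemann functions of asymptotic rays differ by an additive constant (Bridson--Haefliger II.8.20), and this is exactly what justifies your renormalization to $b(x)=0$.
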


This immediately allows us to derive the following stochastic variant of quasi-Fej\'er monotonicity.

\begin{lemma}\label{IneqMain}
For any $n\in\mathbb{N}$ and any $y\in C$:
\[
\EE_n[d^2(x_{n+1},y)]\leq d^2(x_n,y)-2t_n(F(x_n)-F(y))+L^2t_n^2.
\]
\end{lemma}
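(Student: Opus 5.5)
Apply Lemma \ref{BSLemma} pathwise and then condition on $\mathsf{F}_n$. Fix $n$ and $y\in C$. For each $\omega$, the pair $[\xi_n(\omega),s_n(\omega)]=\mathsf{Busemann}_f(\zeta_{n+1}(\omega),x_n(\omega))$ is a Busemann subgradient of $g:=f(\zeta_{n+1}(\omega),\cdot)$ at $x_n(\omega)$. Lemma \ref{BSLemma} with $t=t_n$ then gives
\[
d^2(x_{n+1},y)\leq d^2(x_n,y)-2t_n\bigl(f(\zeta_{n+1},x_n)-f(\zeta_{n+1},y)\bigr)+s_n^2t_n^2 .
\]
If $s_n=0$, the ray $r_{x_n,\xi_n}(0)=x_n$ and $P_C(x_n)=x_n$, so \eqref{SB} and the definition of $x^+$ agree in both cases. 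Assumption \eqref{A2} gives $s_n\leq L$, so the last term is at most $L^2t_n^2$.

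Next, take $\EE_n$ of both sides. The random variables $d^2(x_{n+1},y)$ and $d^2(x_n,y)$ are measurable by Lemma \ref{seqMeas}, and $d^2(x_n,y)$ is $\mathsf{F}_n$-measurable. Integrability is no issue. From $d(x_{n+1},x_n)\leq d(r_{x_n,\xi_n}(s_nt_n),x_n)=s_nt_n\leq Lt_n$, using nonexpansivity of $P_C$ and $x_n\in C$, induction gives $d(x_n,x_0)\leq L\sum_{k<n}t_k$, a deterministic bound. Likewise $|f(\zeta_{n+1},x_n)-f(\zeta_{n+1},y)|\leq L\,d(x_n,y)$, since each $f(e,\cdot)$ is $L$-Lipschitz. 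So every term is bounded, and conditional expectation can be applied termwise, preserving the inequality.

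The main step is to identify
\[
\EE_n\bigl[f(\zeta_{n+1},x_n)-f(\zeta_{n+1},y)\bigr]=F(x_n)-F(y).
\]
I would use the freezing lemma for conditional expectations: if $U$ is $\mathsf{G}$-measurable, $V$ is independent of $\mathsf{G}$, and $h$ is jointly measurable with $h(U,V)$ integrable, then $\EE[h(V,U)\mid\mathsf{G}]=H(U)$ with $H(u)=\EE[h(V,u)]$. The ingredients are as follows.
\begin{enumerate}[(i)]
\item The function $f$ is $\mathcal{E}\otimes\mathcal{B}(C)$-measurable, being Carath\'eodory by \eqref{A1}, \eqref{A2} and Lemma \ref{Cara}.
\item The iterate $x_n$ is $\mathsf{F}_n$-measurable.
\item The variable $\zeta_{n+1}$ is independent of $\mathsf{F}_n$. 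This follows because each $x_k$ with $k\leq n$ is $\sigma(\zeta_1,\dots,\zeta_k)$-measurable: it is obtained by measurable compositions in the proof of Lemma \ref{seqMeas} from the oracle \eqref{A3} applied to $(\zeta_k,x_{k-1})$, together with the continuity of rays and of $P_C$. Hence $\mathsf{F}_n=\sigma(\zeta_1,\dots,\zeta_n)$, and the i.i.d.\ assumption in \eqref{parameters} applies.
\item The law of $\zeta_{n+1}$ is $\mu$, so $H(u)=\int f(e,u)\,d\mu(e)=F(u)$.
\end{enumerate}
Integrability of $f(\cdot,u)$ should follow from $F$ being proper (finite somewhere) together with $|f(e,u)-f(e,v)|\leq L\,d(u,v)$. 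This makes $F$ finite everywhere on $C$, which justifies the subtraction.

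Combining these gives the claimed inequality. The obstacle I expect is the careful justification of (iii) and of integrability. The geometric content is entirely contained in Lemma \ref{BSLemma}.
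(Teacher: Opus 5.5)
Your proposal is correct and is essentially the paper's proof. Both apply Lemma \ref{BSLemma} pathwise with $s_n\le L$, then condition on $\mathsf{F}_n$ and use the independence of $\zeta_{n+1}$ from $\mathsf{F}_n$ to identify $\EE_n[f(\zeta_{n+1},x_n)-f(\zeta_{n+1},y)]$ with $F(x_n)-F(y)$; your boundedness, integrability and freezing-lemma details make explicit what the paper leaves implicit. One caveat: your step (iii), like the paper's bare assertion of independence, tacitly applies \eqref{A3} relative to the sub-$\sigma$-algebras $\sigma(\zeta_1,\dots,\zeta_k)$ (in effect assuming a jointly measurable oracle), whereas \eqref{A3} as literally stated only gives $\mathcal{F}$-measurability of the oracle output.
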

\begin{proof}
Fix $y\in C$ and $n\in\mathbb{N}$. Using Lemma \ref{BSLemma}, we get
\[
d^2(x_{n+1},y)\leq d^2(x_n,y)-2t_n(f(\zeta_{n+1},x_n)-f(\zeta_{n+1},y))+s_n^2t_n^2.
\]
By assumption (\textsf{A2}), we get $s_n\leq L$ and applying conditional expectations yields
\[
\EE_n[d^2(x_{n+1},y)]\leq d^2(x_n,y)-2t_n(\EE_n[f(\zeta_{n+1},x_n)]-\EE_n[f(\zeta_{n+1},y)])+L^2t_n^2.
\]
Now, as $\zeta_{n+1}$ is independent of $x_n$ and $\mathsf{F}_n$, we have 
\[
\EE_n[f(\zeta_{n+1},x_n)](\omega)=\int f(\zeta_{n+1}(\omega'),x_n(\omega))\,d\PP(\omega')=F(x_n(\omega))
\]
and similarly $\EE_n[f(\zeta_{n+1},y)]=F(y)$. This yields the claim.
\end{proof}

We can now prove Theorems \ref{SBstrongConv} and Theorem \ref{SBweakErgodicConv}:

\begin{proof}[Proof of Theorem \ref{SBstrongConv}]
By Lemma \ref{IneqMain}, we have 
\[
\EE_n[d^2(x_{n+1},z)]\leq d^2(x_n,z)-2t_n(F(x_n)-\mathrm{min}F)+L^2t_n^2
\]
for any $z\in\mathrm{argmin}F$, for which we fix one in the following (using $\mathrm{argmin}F\neq\emptyset$). Therefore, using \eqref{parameters}, we find that \eqref{randFejer} in Proposition \ref{weakConvergence} is satisfied. Items (1) and (2) of that result now yield that $(x_n)$ is bounded and $\sum_{n=0}^\infty t_n(F(x_n)-\mathrm{min}F)<+\infty$ a.s., say jointly on a set $\widehat{\Omega}$ of measure one. Fix one such $\omega\in\widehat{\Omega}$. Using \eqref{parameters} again, this in particular implies 
\[
\liminf_{n\to\infty} F(x_n(\omega))=\mathrm{min}F.
\]
Let $F(x_{n_k}(\omega))\to \mathrm{min} F$. Using local compactness, choose a convergent subsequence $x_{n_{k_j}}(\omega)\to x(\omega)$. In particular, using that $F$ is lsc by Fatou's lemma, we have
\[
F(x(\omega))\leq \liminf_{j\to\infty} F(x_{n_{k_j}}(\omega))=\lim_{j\to\infty} F(x_{n_{k_j}}(\omega))=\mathrm{min} F
\]
so that $x(\omega)\in\mathrm{argmin}F$ for any $\omega\in \widehat{\Omega}$. In particular, we thus have that $\mathfrak{S}(x_n)\cap\mathrm{argmin}F\neq\emptyset$ a.s., so that item (5) of Proposition \ref{weakConvergence} yields that $(x_n)$ strongly converges a.s.\ to an $\mathrm{argmin}F$-valued random variable.
\end{proof}

Further, we also immediately get the following proof for the corresponding result on weak ergodic convergence:

\begin{proof}[Proof of Theorem \ref{SBweakErgodicConv}]
As in the proof of Theorem \ref{SBstrongConv}, we derive
\[
\EE_n[d^2(x_{n+1},z)]\leq d^2(x_n,z)-2t_n(F(x_n)-\mathrm{min}F)+L^2t_n^2
\]
for any $z\in\mathrm{argmin}F$ from Lemma \ref{IneqMain}, which using \eqref{parameters} further yields
\[
\sum_{n=0}^\infty t_n(F(x_n(\omega))-\mathrm{min}F)<+\infty
\]
for any $\omega\in \widehat{\Omega}$, with some $\widehat{\Omega}$ of measure one. Using the convexity of $F$, we now have that
\[
F(\overline{x}_n(\omega))-\mathrm{min}F\leq \frac{\sum_{k=0}^n t_k(F(x_k(\omega))-\mathrm{min}F)}{\overline{t}_n}\to 0
\]
as $\overline{t}_n\to +\infty$ by \eqref{parameters} and $\sum_{n=0}^\infty t_n(F(x_n(\omega))-\mathrm{min}F)<+\infty$, for any $\omega\in \widehat{\Omega}$. Therefore, if $\overline{x}_{n_{k}}(\omega)\to^w x(\omega)$ we get 
\[
F(x(\omega))\leq \liminf_{k\to\infty} F(\overline{x}_{n_{k}}(\omega))=\lim_{n\to\infty} F(\overline{x}_{n}(\omega))=\mathrm{min} F
\]
so that $x(\omega)\in\mathrm{argmin}F$ for any $\omega\in \widehat{\Omega}$, using that $F$ is lsc by Fatou's lemma and hence also weakly lsc (recall Section \ref{preliminaries}). We have thus shown $\mathfrak{W}(\overline{x}_n)\subseteq \mathrm{argmin}F$ a.s. Further, note that $\mathrm{argmin}F$ is convex as $F$ is convex. Lastly, note that Lemma \ref{BSLemma} in particular implies
\[
d^2(x_{n+1},y)\leq d^2(x_n,y)-2t_n(f(\zeta_{n+1},x_n)-f(\zeta_{n+1},y))+s_n^2t_n^2
\]
for all $y\in C$, so that setting $y=x_n$ yields $d^2(x_{n+1},x_n)\leq L^2t_n^2$ and hence
\[
\sum_{n=0}^\infty d^2(x_{n+1},x_n)<+\infty\text{ a.s.}
\]
and so $d(x_{n+1},x_n)\to 0$ a.s. The assumptions of Proposition \ref{weakErgodicConvergence} are thereby met, which yields that $(\overline{x}_n)$ weakly converges a.s.\ to an $\mathrm{argmin}F$-valued random variable.
\end{proof}

At last, we turn to the quantitative result given in Theorem \ref{SBrates}. For that, we require the following quantitative version of a lemma of Qihou \cite{Qihou2001} (see also Lemma 5.31 in \cite{BauschkeCombettes2017}):

\begin{lemma}[Theorem 3.2 in \cite{NeriPowell2024}]\label{qihou}
Let $(x_n)$, $(\alpha_n)$, $(\beta_n)$ and $(\gamma_n)$ be sequences of nonnegative reals with
\[
x_{n+1}\leq (1+\alpha_n)x_n-\beta_n+\gamma_n
\]
for all $n\in\mathbb{N}$. If $\prod_{i=0}^\infty(1+\alpha_i)<\infty$ and $\sum_{i=0}^\infty\gamma_i<\infty$, then $(x_n)$ converges and $\sum_{i=0}^\infty\beta_i<\infty$.

Further, if $K,L,M>0$ satisfy $x_0<K$, $\prod_{i=0}^\infty(1+\alpha_i)<L$ and $\sum_{i=0}^\infty\gamma_i<M$, then $\sum_{i=0}^\infty\beta_i<L(K+M)$.
\end{lemma}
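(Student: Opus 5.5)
The plan is the classical rescaling trick: divide by the running product of the factors $1+\alpha_i$. This turns the recursion into an almost-monotone one with only additive errors, after which both the qualitative and the quantitative claims follow by telescoping.

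Set $P_0:=1$ and $P_n:=\prod_{i=0}^{n-1}(1+\alpha_i)$ for $n\geq 1$. Then $(P_n)$ is nondecreasing, satisfies $P_n\geq 1$, and converges to $P_\infty:=\prod_{i=0}^\infty(1+\alpha_i)\in[1,\infty)$. Define $y_n:=x_n/P_n\geq 0$. Dividing the assumed inequality by $P_{n+1}=(1+\alpha_n)P_n$ gives
\[
y_{n+1}\leq y_n-\frac{\beta_n}{P_{n+1}}+\frac{\gamma_n}{P_{n+1}}\leq y_n-\frac{\beta_n}{P_\infty}+\gamma_n ,
\]
where the second step uses $1\leq P_{n+1}\leq P_\infty$.

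For the quantitative bound, sum this inequality from $i=0$ to $n$ and use $y_{n+1}\geq 0$. This yields
\[
\sum_{i=0}^n\beta_i\leq P_\infty\Bigl(y_0+\sum_{i=0}^n\gamma_i\Bigr)\leq P_\infty\Bigl(x_0+\sum_{i=0}^\infty\gamma_i\Bigr)
\]
for every $n$. Hence $\sum_i\beta_i<\infty$, and moreover $\sum_i\beta_i\leq P_\infty(x_0+\sum_i\gamma_i)$. Under the hypotheses $x_0<K$, $P_\infty<L$ and $\sum_i\gamma_i<M$, with all quantities nonnegative and $P_\infty\geq 1>0$, this gives the strict bound
\[
\sum_{i=0}^\infty\beta_i<L(K+M).
\]

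For convergence of $(x_n)$, consider $u_n:=y_n+\sum_{i\geq n}\gamma_i$. The first displayed inequality, with $\beta_n\geq 0$, shows that $(u_n)$ is nonincreasing, and it is bounded below by $0$, so it converges. Since the tails $\sum_{i\geq n}\gamma_i$ tend to $0$, the sequence $(y_n)$ converges. Because $P_n\to P_\infty\in[1,\infty)$, it follows that $x_n=P_ny_n$ converges as well.

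The only point needing care is the strictness and the direction of the inequalities. Bounding $1/P_{n+1}$ from below by $1/P_\infty$ in the $\beta$-term and from above by $1$ in the $\gamma$-term is exactly what produces the constant $L(K+M)$. Strictness is then inherited from $x_0<K$ together with $P_\infty<L$.
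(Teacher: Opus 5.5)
Your proof is correct. Normalizing by the partial products $P_n=\prod_{i<n}(1+\alpha_i)$, telescoping, and using the nonincreasing sequence $y_n+\sum_{i\geq n}\gamma_i$ gives both the convergence of $(x_n)$ and the bound $\sum_i\beta_i\leq P_\infty\bigl(x_0+\sum_i\gamma_i\bigr)<L(K+M)$. The final inequality is strict because $P_\infty\geq 1$ and $K+M>0$.

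The paper does not prove this lemma; it only cites Theorem 3.2 of \cite{NeriPowell2024}. So there is no argument in the paper to compare yours against. Your rescaling is the standard deterministic counterpart of the usual Robbins--Siegmund proof.

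Your intermediate non-strict bound, stated with the exact product $P_\infty$, is in fact the convenient form for Lemma \ref{liminf}. There the lemma is applied, after taking expectations, with $\alpha_n\equiv 0$, so $P_\infty=1$. Your bound then gives directly $\sum_n 2t_n\EE[F(x_n)-\min F]\leq d^2(x_0,x^*)+L^2\sum_n t_n^2<b+L^2T$, without having to choose a product bound strictly larger than $1$.
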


The next result on approximation properties for summable sequences is folklore:

\begin{lemma}\label{sumconv}
Suppose that $(u_n)$, $(v_n)$ are sequences of nonnegative reals with $L>0$ such that $\sum_{n=0}^\infty u_nv_n< L$ and $\theta:\mathbb{N}\times(0,\infty)\to \mathbb{N}$ such that $\sum_{n=k}^{\theta(k,b)} u_n\geq b$ for all $b>0$ and $k\in\mathbb{N}$. Then $\liminf_{n\to\infty}v_n=0$ with 
\[
\forall\varepsilon>0\ \forall N\in\mathbb{N}\ \exists n\in [N;\theta(N,L/\varepsilon)](v_n<\varepsilon).
\]
\end{lemma}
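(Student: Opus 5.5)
The plan is a direct contradiction argument on a block of indices. Fix $\varepsilon>0$ and $N\in\mathbb{N}$, and write $M:=\theta(N,L/\varepsilon)$. Suppose, towards a contradiction, that $v_n\geq\varepsilon$ for every $n\in[N;M]$.

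The first step uses only nonnegativity of the $u_n$ and $v_n$ to discard all terms outside the block:
\[
\sum_{n=0}^\infty u_nv_n\geq\sum_{n=N}^{M}u_nv_n\geq \varepsilon\sum_{n=N}^{M}u_n.
\]
The second step applies the defining property of $\theta$ with $k=N$ and $b=L/\varepsilon$, which gives $\sum_{n=N}^{M}u_n\geq L/\varepsilon$. Combining the two steps yields $\sum_{n=0}^\infty u_nv_n\geq L$, contradicting the hypothesis $\sum_{n=0}^\infty u_nv_n<L$. Hence there is some $n\in[N;M]$ with $v_n<\varepsilon$.

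It remains to deduce $\liminf_{n\to\infty}v_n=0$. Since $N$ is arbitrary, for every $\varepsilon>0$ there are infinitely many $n$ with $v_n<\varepsilon$. Together with $v_n\geq 0$, this gives $\liminf_{n\to\infty}v_n=0$.

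I expect no real obstacle here. The only points needing care are that the strict inequality in the hypothesis $\sum_{n=0}^\infty u_nv_n<L$ is exactly what makes the contradiction strict, and that the block $[N;\theta(N,L/\varepsilon)]$ is nonempty. The latter holds because the defining property of $\theta$ forces the sum $\sum_{n=N}^{\theta(N,b)}u_n$ to be positive, so $\theta(N,b)\geq N$.
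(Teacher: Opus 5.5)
Your proposal is correct and is essentially the paper's own proof: assume $v_n\geq\varepsilon$ on the block $[N;\theta(N,L/\varepsilon)]$, then chain $L\leq\varepsilon\sum_{n=N}^{\theta(N,L/\varepsilon)}u_n\leq\sum_{n=N}^{\theta(N,L/\varepsilon)}u_nv_n\leq\sum_{n=0}^\infty u_nv_n<L$ to reach a contradiction. You also spell out two small points the paper leaves implicit: that $\theta(N,b)\geq N$, so the block is nonempty, and how $\liminf_{n\to\infty}v_n=0$ follows from the quantitative statement.
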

\begin{proof}
For arbitrary $\varepsilon>0$ and $N\in\mathbb{N}$, suppose for a contradiction that $v_n\geq\varepsilon$ for all $n\in [N;\theta(N,L/\varepsilon)]$. Then $L\leq\varepsilon\sum_{n=N}^{\theta(N,L/\varepsilon)}u_n\leq \sum_{n=N}^{\theta(N,L/\varepsilon)} u_nv_n\leq \sum_{n=0}^{\infty} u_nv_n < L$, which is a contradiction.
\end{proof}

With these in place, we now derive a first asymptotic approximation result, extending Theorem \ref{old}:

\begin{lemma}\label{liminf}
Let $\theta:\mathbb{N}\times (0,\infty)\to\mathbb{N}$ be such that $\sum_{n=k}^{\theta(k,b)}t_n\geq b$ for all $b>0$ and $k\in\mathbb{N}$. Further, let $T > \sum_{n=0}^\infty t_n^2$. Lastly, let $b>0$ be such that $b>d^2(x_0,x^*)$ for some minimizer $x^*$ of $F$. Then $\liminf_{n\to\infty}\EE[F(x_n)]=\min F$ with
\[
\forall\varepsilon>0\ \forall N\in\mathbb{N}\ \exists n\in [N;\theta(N,(b+L^2T)/\varepsilon)](\EE[F(x_n)]-\min F<\varepsilon).
\]
\end{lemma}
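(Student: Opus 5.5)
Take total expectations in Lemma \ref{IneqMain} and then apply Lemmas \ref{qihou} and \ref{sumconv} to deterministic real sequences. Concretely, set $y:=x^*$ in Lemma \ref{IneqMain} and take expectations, using the tower property. This gives
\[
\EE[d^2(x_{n+1},x^*)]\leq \EE[d^2(x_n,x^*)]-2t_n(\EE[F(x_n)]-\min F)+L^2t_n^2
\]
for all $n\in\mathbb{N}$. This is a deterministic inequality of the form in Lemma \ref{qihou}, with
\[
\alpha_n:=0,\quad \beta_n:=2t_n(\EE[F(x_n)]-\min F)\geq 0,\quad \gamma_n:=L^2t_n^2 .
\]
Some integrability bookkeeping is needed first. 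Since $x_0$ is deterministic and $d^2(x_{n+1},x^*)\leq (d(x_n,x^*)+Lt_n)^2$ (using nonexpansiveness of $P_C$, $x^*\in C$ and $s_n\leq L$), every $d^2(x_n,x^*)$ is bounded and hence integrable. Likewise $F(x_n)$ is integrable: $F$ is $L$-Lipschitz (as each $f(e,\cdot)$ is, by \eqref{A2}), so $|F(x_n)-F(x^*)|\leq L\,d(x_n,x^*)$.

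Next apply the quantitative part of Lemma \ref{qihou} with constants $K:=b>d^2(x_0,x^*)=\EE[d^2(x_0,x^*)]$ and $M:=L^2T>\sum_n L^2t_n^2$. The product constant should be $1$, since $\prod(1+\alpha_i)=1$. The lemma requires a strict bound there, so take $1+\delta$ and let $\delta\to 0$; alternatively note that with $\alpha_n=0$ the bound $\sum\beta_i\leq K+M$ follows directly by telescoping. Either way, we get
\[
\sum_{n=0}^\infty 2t_n(\EE[F(x_n)]-\min F)<b+L^2T,
\]
and hence $\sum_n t_n(\EE[F(x_n)]-\min F)<(b+L^2T)/2\leq b+L^2T$.

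Finally apply Lemma \ref{sumconv} with $u_n:=t_n$, $v_n:=\EE[F(x_n)]-\min F\geq 0$, and constant $b+L^2T$ in place of its $L$. This yields, for every $\varepsilon>0$ and $N$, some $n\in[N;\theta(N,(b+L^2T)/\varepsilon)]$ with $\EE[F(x_n)]-\min F<\varepsilon$. In particular $\liminf_n\EE[F(x_n)]=\min F$, since $v_n\geq 0$.

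The main obstacle is not the estimates, which are routine. It is making sure the strict inequalities in Lemma \ref{qihou} come out right, handled by the telescoping remark above, and justifying that the expectations are finite so that taking total expectation is legitimate.
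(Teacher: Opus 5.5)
Your proposal is correct and follows the paper's proof: take expectations in Lemma \ref{IneqMain} at $y=x^*$, apply Lemma \ref{qihou} to obtain $\sum_n t_n(\EE[F(x_n)]-\min F)<b+L^2T$, and conclude with Lemma \ref{sumconv}. Your additional bookkeeping fills in details the paper leaves implicit. This covers the integrability of $d^2(x_n,x^*)$ and $F(x_n)$, and getting a strict bound by telescoping, since with $\alpha_n=0$ the product equals $1$ and cannot serve as a strict upper bound in Lemma \ref{qihou}.
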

\begin{proof}
We have
\[
\EE_n[d^2(x_{n+1},x^*)]\leq d^2(x_n,x^*)-2t_n(F(x_n)-\min F)+L^2t_n^2
\]
by Lemma \ref{IneqMain}. Therefore, Lemma \ref{qihou} yields $\sum_{n=0}^\infty t_n\EE[F(x_n)-\min F]<b+L^2T$ and so Lemma \ref{sumconv} yields the result.
\end{proof}

We can now prove Theorem \ref{SBrates}:

\begin{proof}[Proof of Theorem \ref{SBrates}]
For any $n\in\mathbb{N}$, define $X_{n}:=d^2(x_n,x^*)+L^2\sum^\infty_{m=n}t_m^2$. As $(x_n)$ is adapted to $(\mathcal{F}_n)$, also $(X_{n})$ is adapted to $(\mathcal{F}_n)$. As we have
\[
\EE_n[d^2(x_{n+1},x^*)]\leq d^2(x_n,x^*)-2t_n(F(x_n)-\min F)+L^2t_n^2\leq d^2(x_n,x^*)+L^2t_n^2
\]
by Lemma \ref{IneqMain}, the stochastic process $(X_{n})$ is a nonnegative supermartingale. Indeed, note that 
\begin{align*}
\EE_n[X_{n+1}]&=\EE_n\left[d^2(x_{n+1},x^*)\right]+L^2\sum^\infty_{m={n+1}}t_m^2\leq d^2(x_n,x^*) + L^2\sum^\infty_{m=n}t_m^2=X_{n}.
\end{align*}
Using the fact that $f(e,\cdot)$ is strongly convex, we get that $F$ is strongly convex with parameter $\underline{\alpha}$, i.e.
\[
F(\gamma(tl))\leq (1-t)F(\gamma(0))+tF(\gamma(1)) -t(1-t)\frac{\underline{\alpha}}{2}d^2(\gamma(0),\gamma(1))
\]
for any geodesic $\gamma:[0,l]\to X$. For $\gamma$ being the unique geodesic joining $x_n$ and $x^*$, and $l=d(x_n,x^*)$, we get
\[
\min F\leq F\left(\gamma\left(\frac{l}{2}\right)\right)\leq \frac{1}{2}F(x_n)+\frac{1}{2}\min F -\frac{\underline{\alpha}}{8}d^2(x_n,x^*)
\]
so that
\[
\frac{\underline{\alpha}}{4}d^2(x_n,x^*)\leq F(x_n)-\min F.
\]
Now, let $\varepsilon>0$ be arbitrary and using Lemma \ref{liminf}, choose an
\[
n\in \left[\chi(\varepsilon/2L^2);\theta(\chi(\varepsilon/2L^2),8(b+L^2T)/\varepsilon\underline{\alpha})\right]
\]
such that $\EE[F(x_n)]-\min F<\varepsilon\underline{\alpha}/8$. Then $\EE[d^2(x_{n},x^*)]<\varepsilon/2$. Let $m\geq n$ be arbitrary. Then
\[
\EE[d^2(x_{m},x^*)]\leq \EE[X_m]\leq \EE[X_n]= \EE[d^2(x_{n},x^*)]+L^2\sum^\infty_{m=n}t_m^2< \varepsilon
\]
using that $(X_m)$ is a supermartingale and the properties of $\chi$. As $m$ was arbitrary, this yields $\EE[d^2(x_{n},x^*)]\to 0$ and that $\rho$ is a rate of convergence for that limit.

For $d^2(x_{n},x^*)\to 0$ a.s., note that
\begin{align*}
\PP(\exists m\geq n(d^2(x_{m},x^*)\geq a))\leq \PP(\exists m\geq n(X_m\geq a))\leq \frac{\EE[X_n]}{a}
\end{align*}
where the second inequality follows from Ville's inequality \cite{Ville1939} (see also \cite{Metivier1982}). This immediately implies that $d^2(x_{n},x^*)\to 0$ a.s.\ with rate $\rho'$.
\end{proof}

\noindent\textbf{Acknowledgments.} I want to thank Morenikeji Neri and Thomas Powell for comments on a previous draft of this paper.

\bibliographystyle{plain}
\bibliography{ref}

\end{document}